\documentclass[12pt,a4paper]{article}
\usepackage{latexsym,amssymb,amsfonts,amsmath,amsthm,nccmath,float,enumitem,setspace,bm,authblk}
\usepackage[usenames,dvipsnames]{xcolor}
\usepackage[hidelinks]{hyperref}
\usepackage[margin=1.75cm]{geometry}
\usepackage{bm}
\usepackage{booktabs}
\usepackage{verbatim}
\usepackage{diagbox}
\usepackage{makecell}
\usepackage{float}
\usepackage[dvipsnames]{xcolor}
\allowdisplaybreaks[1]

\hypersetup{
	colorlinks,
	linkcolor={red!80!black},
	citecolor={blue!80!black},
	urlcolor={blue!80!black}
}

\newtheorem{thm}{Theorem}[section]
\newtheorem{que}[thm]{Question}
\newtheorem{pro}[thm]{Proposition}

\newtheorem{rem}[thm]{Remark}

\newtheorem{lem}[thm]{Lemma}
\newtheorem{core}[thm]{Corollary}

\newtheorem{example}{Example}[section]%

 \def\ZZ{\mathbb Z}

\def\exp{{\sf exp}}

\let\oldproofname=\proofname
\renewcommand{\proofname}{\rm\bf{\oldproofname}}

\begin{document}
	
	\title{Perfect state transfer on Cayley graphs over dihedral groups: A complete and practical characterization}
	\author[a]{Shixin Wang}
	\affil[a]{School of Mathematical Sciences, Laboratory of Mathematics and Complex Systems, MOE, Beijing Normal University, 100875 Beijing, China}
	\affil[ ]{sxwang@bjtu.edu.cn}
	\date{}	
	\renewcommand*{\Affilfont}{\small\it}
	\maketitle
\begin{abstract}
	
Perfect state transfer on graphs has attracted extensive attention due to its application in quantum information and quantum computation.
Explicit characterizations of connection sets admitting perfect state transfer in Cayley graphs are rare and, so far, are known only for a few abelian Cayley graphs. 
In this paper, we characterize the conjugation-closed connection sets of connected Cayley graphs over dihedral groups that admit perfect state transfer. 
By applying Ramanujan sums, M\"obius inversion, and arguments based on the \(p\)-adic exponential valuation of rational numbers, we convert the eigenvalue constraints imposed by perfect state transfer into explicit structural conditions on the connection set.
This yields a complete and practical characterization, which gives an effective criterion for recognizing and constructing such Cayley graphs and also determines the exact minimum perfect state transfer time.

\end{abstract}

\noindent {\bf Keywords}: perfect state transfer; Cayley graph; dihedral group; Ramanujan sum; \(p\)-adic exponential valuation.
\section{Introduction}	

In 2000, Bennett \emph{et al.}~\cite{Bennett} pointed out that transmitting a quantum state from one location to another is a fundamental task in quantum information and quantum computation.
Since then, state transfer via quantum walks on graphs has attracted increasing attention from both physicists and mathematicians.
Bose~\cite{Bose} proposed a communication scheme using spin chains as quantum channels, thereby introducing the notion of state transfer, and Christandl \emph{et al.}~\cite{Christandl} subsequently gave the definition of perfect state transfer on graphs.
Let $\Gamma$ be a graph with adjacency matrix $A$ and vertex set $V$. 
The continuous-time quantum walk on $\Gamma$ at time $t\in\mathbb{R}_{\ge 0}$ is given by 
\[
H(t)=\exp(-\mathrm{i}tA)=\sum_{s=0}^{\infty}\frac{(-\mathrm{i}tA)^s}{s!}.
\]
The matrix $H(t)$ is called the \emph{transfer matrix} of $\Gamma$ at time $t$.
For two distinct vertices \(u,v\in V\), we say that \(\Gamma\) admits \emph{perfect state transfer} (PST for short) from \(u\) to \(v\) at time \(t\) if \(|H(t)_{u,v}|=1\).
If \(|H(t)_{u,u}|=1\), then \(\Gamma\) is said to be \emph{periodic at} \(u\)
at time \(t\).
If \(\Gamma\) is periodic at every vertex at time \(t\), then \(\Gamma\) is
\emph{periodic} at time \(t\).

Perfect state transfer provides an idealized model of lossless quantum communication and has therefore attracted considerable attention. 
Godsil \cite{State transfer on graphs} surveyed state transfer on graphs and observed that perfect state transfer is a rare phenomenon, occurring only in very few graphs.
Thus, determining whether a given graph admits perfect state transfer is one of the central problems in this area. 
To date, perfect state transfer has been studied for several important graph families, including paths \cite{Christandl,State transfer on graphs}, trees \cite{tree 1,tree 2}, cubelike graphs \cite{cubelike}, circulant graphs \cite{basic}, and graphs arising from association schemes \cite{association schemes}. 

Cayley graphs form a natural class in the study of perfect state transfer because of their strong algebraic structure.
Let \(G\) be a group and let \(S\subseteq G\setminus\{1_G\}\). 
The \emph{Cayley digraph} \(\mathrm{Cay}(G,S)\) is the digraph with vertex set \(G\) in which \((x,y)\) is an arc whenever \(yx^{-1}\in S\) and the set \(S\) is called the \emph{connection set} of \(\mathrm{Cay}(G,S)\). 
If \(S^{-1}=S\), then \(\mathrm{Cay}(G,S)\) is undirected, and is called a \emph{Cayley graph}. 
The graph \(\mathrm{Cay}(G,S)\) is connected if and only if \(\langle S\rangle=G\).
Perfect state transfer on Cayley graphs has been studied for various classes of groups, including abelian groups \cite{Tan}, dihedral groups \cite{dihedral,Perfect state transfer on Cayley graphs over dihedral group}, semidihedral groups \cite{semi-dihedral}, dicyclic groups \cite{dicyclic}, a class of Cayley graphs obtained as certain extensions of complete graphs \cite{NEPS}. 
However, these works mainly focus on the constraints imposed on the eigenvalues by perfect state transfer, rather than on explicit descriptions of the connection sets for which perfect state transfer occurs.

To the best of our knowledge, explicit characterizations of connection sets of Cayley graphs admitting perfect state transfer are known only in two classes. 
Ba\v{s}i\'c \cite[Theorem 22]{basic} obtained such a characterization for circulant graphs, namely Cayley graphs over cyclic groups, by using number-theoretic methods, while \'Arnad\'ottir and Godsil \cite[Theorem 9.1]{On state transfer in Cayley graphs for abelian groups} did so for Cayley graphs over abelian groups with cyclic Sylow-$2$ subgroup by using group-theoretic methods.

In this paper, we establish the first complete connection-set characterization of perfect state transfer for Cayley graphs over a family of non-abelian groups.
More precisely, we completely determine the conjugation-closed connection sets for which a connected Cayley graph over a dihedral group admits perfect state transfer.
Our approach combines character-theoretic eigenvalue formulas for Cayley graphs, Ramanujan sums, M\"obius inversion on the divisor lattice, and the \(p\)-adic exponential valuation of rational numbers. 
The key idea is to derive explicit eigenvalue expressions and then work backwards from the eigenvalue constraints imposed by perfect state transfer to obtain structural restrictions on the connection sets. 
In this way, we extend the connection-set characterizations previously obtained by Ba\v{s}i\'c and by \'Arnad\'ottir and Godsil to the non-abelian case of Cayley graphs over dihedral groups. 
Combining Theorems \ref{thm:8midn-characterization}, \ref{thm:n4mod8-full-reflection} and \ref{thm:n4mod8-half-reflection} with Corollaries~\ref{cor:minimal-time-8midn}, \ref{cor:minimal-time-n4mod8} and~\ref{core: minimal time 2}, and including the description of the perfect state transfer pair, we obtain the following summary of our main results.

\begin{thm}\label{thm:1.1}
	Let \(G\cong D_{2n}\), $G=\langle a,b: a^n=b^2=1,b^{-1}ab=a^{-1}\rangle$ and \(S=S_0\cup S_1\subseteq G\setminus\{1_G\}\) be conjugation-closed, where $S_0=S\cap \langle a\rangle$, $S_1=S\cap b\langle a\rangle$.
	Let $\Gamma=\mathrm{Cay}(G,S)$ be connected.
	When \(4\mid n\), set $P:=\{d:d\mid n,\ v_2(\frac nd)=2\}
	\setminus\left\{\frac n4\right\}$.
	Then \(\Gamma\) admits perfect state transfer if and only if $n$ is even and one of the following holds.
		\begin{enumerate}
	\item[\rm(1)] \(8\mid n\), and there exist $E\subseteq P$, $F\subseteq \{d:d\mid n,\ v_2(\frac nd)\ge 3\}$ such that
	\[
	S_0=\bigcup_{d\in D}G_n(d),
	\quad
	D=F\cup E\cup 2E\cup 4E\cup\{\xi\},\quad \xi\in\left\{\frac n4,\frac n2\right\}.
	\]
	In this case, the only restriction on \(S_1\) is that it is nonempty.
	\item[\rm(2)] \(n\equiv 4\pmod 8\), there exists \(E\subseteq P\) such that either $S_1=b\langle a\rangle$ and
	\[
	S_0=\bigcup_{d\in D}G_n(d),
	\quad
	D=E\cup 2E\cup 4E\cup\{\xi\},\quad \xi\in\left\{\frac n4,\frac n2\right\}
	\]
or $S_1\in\{b\langle a^2\rangle,ba\langle a^2\rangle\}$ and there exists \(\rho\in\{0,1\}\) with \(E\ne\emptyset\) or \(\rho=1\) such that
	\[
S_0=\bigcup_{d\in D}G_n(d),
\quad
D=E\cup 2(P\setminus E)\cup 4(P\setminus E)
\cup \rho\left\{\frac n4,\frac n2\right\}.
\]		
	\item[\rm (3)] \(n\equiv 2 \pmod 4\), $S=G\setminus\{1_G,a^\frac{n}{2}\}$.
	\end{enumerate}
	Moreover, if \(\Gamma\) admits perfect state transfer between distinct vertices \(g\) and \(g'\),
	then \(g'=ga^{\frac n2}\), and the minimum time at which perfect state transfer occurs is $\frac{\pi}{2}$.
\end{thm}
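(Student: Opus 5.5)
Since the statement is a summary of Theorems \ref{thm:8midn-characterization}, \ref{thm:n4mod8-full-reflection}, \ref{thm:n4mod8-half-reflection} and Corollaries \ref{cor:minimal-time-8midn}, \ref{cor:minimal-time-n4mod8}, \ref{core: minimal time 2}, the plan is to prove it directly along the same route those results take. Because $S$ is conjugation-closed, $A$ lies in the centre of the group algebra, so its eigenvalues are $\lambda_\chi=\frac{1}{\chi(1)}\sum_{s\in S}\chi(s)$ for $\chi\in\mathrm{Irr}(D_{2n})$. This gives four (or two, for $n$ odd) linear characters and the $2$-dimensional characters $\psi_h$ with $\psi_h(a^k)=2\cos(2\pi hk/n)$ and $\psi_h(ba^k)=0$.

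\textbf{Step 1: eigenvalues.} Conjugacy-closedness forces $S_0$ to be a union of the classes $\{a^k,a^{-k}\}$. Since $S_0$ must also be a union of the sets $G_n(d)=\{a^k:\gcd(k,n)=d\}$, the eigenvalue sums become Ramanujan sums. I expect $S_0$ to be a union of $G_n(d)$ for $d\in D$ with $D\subseteq\{d: d\mid n,\ d<n\}$; this is forced by integrality, which I check via the standard Galois argument. We then get
\[
\lambda_h=\sum_{d\in D}c_{n/d}(h),\qquad h=1,\dots,\lfloor (n-1)/2\rfloor .
\]
The linear eigenvalues are $|S_0|\pm|S_1|$ for the characters trivial on $a$, and, when $n$ is even, $\sum_{d\in D}(-1)^{n/d}|G_n(d)|\pm(|S_1\cap b\langle a^2\rangle|-|S_1\cap ba\langle a^2\rangle|)$.

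\textbf{Step 2: PST criteria.} The Cayley graph is vertex-transitive, and PST from $g$ to $g'$ in a graph whose adjacency matrix is central in the group algebra forces $g^{-1}g'$ to be a central involution. Hence $n$ is even and $g'=ga^{n/2}$. Writing $H(t)=\sum_\chi \frac{\chi(1)}{2n}e^{-\mathrm{i}t\lambda_\chi}\bar\chi$ as an element of the group algebra, PST at time $t$ amounts to $e^{-\mathrm{i}t\lambda_\chi}=\gamma\,\chi(a^{n/2})/\chi(1)$ for all $\chi$. In terms of $2$-adic valuations this means:
\begin{itemize}
\item all eigenvalues are integers;
\item all differences $\lambda_\chi-\lambda_{\chi'}$ with $\chi(a^{n/2})/\chi(1)=\chi'(a^{n/2})/\chi'(1)$ have a common $2$-adic valuation $\ge v+1$;
\item differences between the two sign classes (even versus odd $h$, and likewise for the linear characters) have valuation exactly $v$.
\end{itemize}
The minimum time is then $\pi/2^{v}$, and showing $v=1$ yields $\pi/2$.

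\textbf{Step 3: inversion.} Translate these constraints on $\lambda_h$ into constraints on the indicator function of $D$. Using $c_{m}(h)=\sum_{e\mid\gcd(m,h)}\mu(m/e)e$ and Möbius inversion over the divisor lattice, $\lambda_h$ depends only on $\gcd(h,n)$, and the indicator of $D$ can be recovered from suitable differences of eigenvalues. Comparing $h$ and $h'$ whose gcds differ only in the $2$-part pairs divisors $d,2d,4d$. Then:
\begin{itemize}
\item the condition at $v_2(n/d)=2$ gives the coupling $E\cup2E\cup4E$ (or its complementary version $E\cup 2(P\setminus E)\cup4(P\setminus E)$);
\item divisors with $v_2(n/d)\ge3$ are unconstrained, giving $F$ when $8\mid n$;
\item the special divisors $n/4$ and $n/2$ are controlled by the linear characters, producing $\xi$ or $\rho$.
\end{itemize}
The linear-character differences involve $|S_1|$ and its split between the two $\langle a^2\rangle$-cosets. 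Since $|S_1|$ is a union of classes, it is one of $n$, $n/2$ (for $n$ even), so $S_1$ is $b\langle a\rangle$, $b\langle a^2\rangle$ or $ba\langle a^2\rangle$. The case $8\mid n$ leaves $S_1$ free, whereas $n\equiv4\pmod 8$ forces the two subcases. For $n\equiv2\pmod4$ the valuation bookkeeping collapses and only $S=G\setminus\{1,a^{n/2}\}$ survives. Connectivity excludes the degenerate choices, namely $S_1=\emptyset$ and $E=\emptyset$ with $\rho=0$.

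\textbf{Main obstacle.} I expect the hardest part to be Step 3's $2$-adic analysis. The challenge is extracting exact valuations of sums of $\varphi$-values and Ramanujan sums, $v_2(\sum_{d\in D'}\varphi(n/d))$, across all $h$ simultaneously. I would first handle the odd part of $n$ by showing that the constraints factor over divisors of the form $2^j m$, and then treat the three $2$-adic levels $j=v_2(n/d)\in\{1,2,\ge3\}$ by explicit case computation. For the converse direction I would verify each family directly against the Step 2 criterion.
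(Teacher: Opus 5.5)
\textbf{Main gap: Step~3 restates the conclusion rather than deriving it.} Your Steps~1--2 essentially re-derive Corollary~\ref{core:eigenvalues of D2n} and Lemma~\ref{lem:iff of PST}, which is fine. Step~3, however, carries the entire content of the theorem, and it only restates the target shapes ($E\cup2E\cup4E$, the complementary form, $F$ free, $\xi$ or $\rho$, $S=G\setminus\{1_G,a^{n/2}\}$). The mechanism you sketch would not produce them as stated. The PST criterion never fixes the eigenvalues; it fixes only $2$-adic valuations of their differences. So you cannot ``recover $\mathbf{1}_D$ from suitable differences of eigenvalues'' by M\"obius inversion over $\mathbb{Z}$. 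Inversion is available only modulo $2$, and only after reducing to an odd-order integral circulant all of whose nontrivial eigenvalues have known parity. Nor do the constraints factor over odd parts: each $\eta_h=\sum_{d\in D}c(h,\frac nd)$ mixes all divisors, and a conclusion such as ``every $p\in P$ satisfies exactly one of $p\in D_2$, $2p\in D_1$'' is a global statement about a set.

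What is missing is the following chain. (i) For $4\mid n$, $c(h,\frac nd)=0$ for odd $h$ whenever $4\mid\frac nd$. So the odd-indexed $\eta_h$ are exactly the odd-position eigenvalues of $\mathrm{ICG}_n(D_{\le1})$, and $v_2(\eta_{j+1}-\eta_j)=\gamma$ for all $j$ gives them a common parity. Ba\v{s}i\'c's parity criterion (Lemma~\ref{lem:basic lem12+13}) then forces $D_0=2(D_1\setminus\{\frac n2\})$ and $\eta_h\in\{0,-1\}$. (ii) The halving identity of Lemma~\ref{lem:basic lem 17} gives $\eta_{2j}=2\sigma_j+\varepsilon$ for odd $j$, with $\sigma_j$ the eigenvalues of $\mathrm{ICG}_{n/2}\bigl((D_1\setminus\{\frac n2\})\cup D_2\bigr)$. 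A second application of the parity criterion gives $D_1\setminus\{\frac n2\}=2(D_2\setminus\{\frac n4\})$. The choice of $\xi$ then comes from $\eta_2-\eta_1=2(\varepsilon-\delta)\neq0$, a condition on the $2$-dimensional characters; for $8\mid n$ the linear characters impose nothing, which is why $S_1$ is free. (iii) In the half-reflection case, $\sigma_{n/4}$ has parity opposite to the other odd-position $\sigma_j$, so the parity criterion no longer applies. Instead, $\gamma=1$ (forced by $\lambda_1-\lambda_2=n$), after deducing $\delta=\varepsilon$ from the $\lambda_3$ condition, shows that every nontrivial eigenvalue of $\mathrm{ICG}_{n/4}(B)$ is odd, where $B\subseteq P$ consists of the $p$ satisfying exactly one of $p\in D_2$, $2p\in D_1$. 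The mod-$2$ M\"obius inversion of Lemma~\ref{lem:auxiliary-Km} then gives $B=P$. The same lemma applied to $\mathrm{ICG}_{n/2}(D_1)$ is what yields $S=G\setminus\{1_G,a^{n/2}\}$ when $n\equiv2\pmod4$. Finally, for sufficiency with $F\neq\emptyset$ you must show that $D_{\ge3}$ contributes a multiple of $4$ to every eigenvalue (Lemma~\ref{lem:D3 PST}); ``unconstrained'' is the conclusion, not a reason.

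\textbf{Two errors in Steps~1--2.} First, for $a^k\in G_n(d)$ with $n$ even, $\gcd(k,n)=d$ forces $k\equiv d\pmod 2$. Hence $\sum_{a^k\in S_0}(-1)^k=\sum_{d\in D}(-1)^{d}\varphi(\frac nd)$, not $\sum_{d\in D}(-1)^{n/d}|G_n(d)|$; already for $n=4$, $D=\{1\}$ your formula gives $+2$ instead of $-2$. Since the linear-character conditions are compared $2$-adically before $\gamma$ is known, you need the exact value. Second, and more seriously, the minimum PST time is not $\pi/2^{v}$. PST does occur at $\pi/2^{v}$, but the minimum time is $\pi/M$ with $M=\gcd\{\lambda_1-\lambda:\lambda\neq\lambda_1\}$. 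Knowing $v_2(M)=v$ says nothing about the odd part of $M$, so showing $v=1$ only gives $M=2\cdot(\text{odd})$. To reach $\frac{\pi}{2}$ you need $M\mid2$. The paper gets this from $\eta_2-\eta_1=\pm2$ in case (1) and the full-reflection case of (2); from $\gcd\bigl(4q,\,2(2q-1+2\rho)\bigr)=2$ with $q=\frac n4$ in the half-reflection case; and from the explicit spectrum $4m-2,\,-2,\,0$ with $m=\frac n2$ in case (3).
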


Theorem \ref{thm:1.1} significantly strengthens previous results such as \cite[Theorem~3.2]{Perfect state transfer on Cayley graphs over dihedral group}.
In that result, one must first compute all eigenvalues and then verify several \(2\)-adic conditions, while the minimum perfect state transfer time is given only implicitly through a graph-dependent parameter (see Lemma~\ref{lem:iff of PST}).  
By contrast, Theorem \ref{thm:1.1} reduces the problem to an immediate structural check (see Remarks \ref{rem:1} and \ref{rem:2}).
This makes the criterion more direct and easier to apply in practice. 
It also shows that the minimum perfect state transfer time is \(\frac{\pi}{2}\), and hence is graph-independent.

The paper is organized as follows. Section \ref{sec:Preliminaries} collects the preliminaries used throughout. In Section \ref{sec:Eigenvalues of Cayley Graphs on dihedral groups in terms of Ramanujan sum}, we derive Ramanujan-sum expressions for the eigenvalues of integral Cayley graphs over dihedral groups,
and use M\"obius inversion on the divisor lattice to characterize a class of integral circulant graphs. 
Section \ref{sec:Perfect state transfer of Cayley Graphs on dihedral groups} gives a reformulation of the perfect state transfer criterion. 
Section \ref{sec:Characterization} establishes complete characterizations of the connection sets admitting perfect state transfer, according to whether \(n\equiv 0 \pmod{4}\) or \(n\equiv 2 \pmod{4}\). Section \ref{sec:examples} illustrates the applicability of these results through several examples. Section \ref{sec:7} concludes with remarks and an open problem.

\section{Preliminaries}	\label{sec:Preliminaries}

In this section, we present the preliminary results used throughout the paper. 
All graphs are assumed to be simple and undirected, and all Cayley graphs over dihedral groups considered are assumed to have conjugation-closed connection sets. 
The main object of this paper is connected Cayley graphs over dihedral groups.
The auxiliary integral circulant graphs appearing in the proofs are not assumed to be connected unless this is explicitly stated.

\subsection{$p$-adic exponential valuation of rational numbers}

For a prime $p$, the {\em $p$-adic exponential valuation of rational numbers} is a mapping defined by
$$v_p:\mathbb{Q}\longrightarrow \ZZ\cup \{\infty\},~v_p(0)=\infty,~v_p(p^l\frac{a}{b})=l,~{\text where} ~a,b,l\in \ZZ~{\text and}~p\nmid ab.$$
We assume that $\infty + \infty= \infty + l=\infty$ and $\infty\ge  l$ for any $l\in \ZZ$.
Then $v_p$ has the following properties.
For $\beta,\beta'\in \mathbb{Q}$,
\begin{itemize}
	\item [(P1)] $v_p(\beta \beta')=v_p(\beta)+v_p(\beta')$;
	\item [(P2)] $v_p(\beta + \beta')\ge  \min(v_p(\beta),v_p(\beta')), \mbox{ and the equality
		holds if }v_p(\beta)\neq v_p(\beta').$
\end{itemize}

Throughout this paper, only the case \(p=2\) will be needed, and the above two properties will be used repeatedly, often without explicit mention.

\subsection{Ramanujan sums}
For a positive integer \(n\) and a non-negative integer \(j\), the  {\it Ramanujan sum} $c(j,n)$ is defined as
\begin{equation}\label{equation:definition}
c(j,n) = \sum_{\substack{k=1\\ \gcd(k,n)=1}}^{n} \exp(2\pi \mathrm{i}\frac{kj}{n}).
\end{equation}
Moreover, $c(j,n) $ has the expression
\begin{equation}\label{equation:Ramanujan sum}
c(j,n) = \mu(t_{n,j}) \frac{\varphi(n)}{\varphi(t_{n,j})}, \quad t_{n,j} = \frac{n}{\gcd(n,j)},
\end{equation}
where $\mu$ is the M\"obius function defined as
\[
\mu(n) =
\begin{cases}
	1, & \text{if } n = 1, \\
	0, & \text{if } n \text{ is not square-free}, \\
	(-1)^k, & \text{if } n \text{ is the product of } k \text{ distinct prime numbers}.
\end{cases}
\]

For later use, we record a simple reduction property of Ramanujan sums.
\begin{lem}\label{lem:ramanujan-odd}
	Let $h$, $m$, $r$, $s$ and $j$ be positive integers, where $h$ and $m$ are odd, $\gcd(r,s)=1$. Then 
	\begin{enumerate}
		\item[$(1)$] $c(h,2m)=-c(h,m)$ and $c(h,4m)=0$.	
		\item[$(2)$] $c(2,2m)=\mu(m)$ and $c(2,4m)=-2\mu(m)$, moreover, $\mu(2m)=-\mu(m)$.
		\item[$(3)$] $c(2j,m)=c(j,m)$.	
		\item[$(4)$] $c(j,rs)=c(j,r)c(j,s)$.
	\end{enumerate}
\end{lem}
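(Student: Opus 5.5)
The whole argument rests on the closed formula \eqref{equation:Ramanujan sum}, $c(j,n)=\mu(t_{n,j})\varphi(n)/\varphi(t_{n,j})$ with $t_{n,j}=n/\gcd(n,j)$. I will compute $t$ in each case, apply the multiplicativity of $\mu$ and $\varphi$ on coprime arguments, and use the fact that $\mu$ vanishes on non-square-free integers. Throughout, $h$ and $m$ are odd.

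\textbf{Part (1).} Since $h$ is odd, $\gcd(2m,h)=\gcd(m,h)$, so
\[
t_{2m,h}=\frac{2m}{\gcd(m,h)}=2\,t_{m,h},
\]
and $t_{m,h}$ is odd. Multiplicativity then gives $\mu(2t_{m,h})=-\mu(t_{m,h})$ and $\varphi(2t_{m,h})=\varphi(t_{m,h})$. Together with $\varphi(2m)=\varphi(m)$, this yields $c(h,2m)=-c(h,m)$. In the same way, $t_{4m,h}=4t_{m,h}$ is divisible by $4$, so $\mu(4t_{m,h})=0$ and hence $c(h,4m)=0$.

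\textbf{Part (2).} Since $m$ is odd, $\gcd(2m,2)=2$, so $t_{2m,2}=m$ and
\[
c(2,2m)=\mu(m)\,\frac{\varphi(2m)}{\varphi(m)}=\mu(m).
\]
Likewise $t_{4m,2}=2m$, so
\[
c(2,4m)=\mu(2m)\,\frac{\varphi(4m)}{\varphi(2m)}=\mu(2m)\cdot\frac{2\varphi(m)}{\varphi(m)}=2\mu(2m).
\]
Finally, $\mu(2m)=\mu(2)\mu(m)=-\mu(m)$ because $\gcd(2,m)=1$, which gives $c(2,4m)=-2\mu(m)$.

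\textbf{Part (3).} Since $m$ is odd, $\gcd(m,2j)=\gcd(m,j)$. Therefore $t_{m,2j}=t_{m,j}$, and the formula gives $c(2j,m)=c(j,m)$ immediately.

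\textbf{Part (4).} Because $\gcd(r,s)=1$, we have $\gcd(rs,j)=\gcd(r,j)\gcd(s,j)$. Hence
\[
t_{rs,j}=t_{r,j}\,t_{s,j},
\]
and the two factors are coprime, since they divide $r$ and $s$ respectively. Applying the multiplicativity of $\mu$ and $\varphi$ to $t_{rs,j}$ and to $rs$ splits the formula into a product, giving $c(j,rs)=c(j,r)\,c(j,s)$.

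Alternatively, part (4) follows from the definition \eqref{equation:definition}: by the Chinese remainder theorem, each unit $k$ modulo $rs$ can be written uniquely as $k\equiv k_1s+k_2r$ with $k_1$ a unit mod $r$ and $k_2$ a unit mod $s$, so the exponential sum factors. I expect no step to be a real obstacle. The only point requiring care is checking coprimality each time multiplicativity is invoked; in particular, that $t_{m,h}$ is odd in part (1) and that $t_{r,j}$ and $t_{s,j}$ are coprime in part (4).
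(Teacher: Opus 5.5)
Your proof is correct and follows essentially the same approach as the paper. In each part you compute $t_{n,j}$, apply the closed formula \eqref{equation:Ramanujan sum}, and use multiplicativity of $\mu$ and $\varphi$ on coprime arguments together with the vanishing of $\mu$ on non-square-free integers. Your alternative Chinese-remainder-theorem argument for part (4) is also valid, but the paper does not need it.
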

\begin{proof}
	For (1), since $m$ and $h$ are odd, we have $\gcd(2m,h)=\gcd(m,h)$,	and hence $t_{2m,h}=\frac{2m}{\gcd(2m,h)}=2\cdot \frac{m}{\gcd(m,h)}=2t_{m,h}$.
	Since $m$ is odd, the integer $t_{m,h}$ is odd. 
	By \eqref{equation:Ramanujan sum},	we obtain
	\[
	c(h,2m)
	=\mu(2t_{m,h})\frac{\varphi(2m)}{\varphi(2t_{m,h})}=-\mu(t_{m,h})\frac{\varphi(m)}{\varphi(t_{m,h})}=-c(h,m).
	\]
	Similarly, $t_{4m,h}=\frac{4m}{\gcd(4m,h)}=4t_{m,h}$, 	which is divisible by $4$, so $\mu(t_{4m,h})=0$. 
	Therefore $c(h,4m)=0$.
	
	For (2), since $m$ odd, $\gcd(2,2m)=2$ and $\gcd(2,4m)=2$.
	Thus $t_{2m,2}=m$ and $t_{4m,2}=2m$.
	Hence
	\[
	c(2,2m)=\mu(m)\frac{\varphi(2m)}{\varphi(m)}=\mu(m),\quad
	c(2,4m)=\mu(2m)\frac{\varphi(4m)}{\varphi(2m)}=2\mu(2m)=-2\mu(m).
	\]
	
	For (3), since $m$ is odd, we have $\gcd(m,2j)=\gcd(m,j)$.
	Hence $t_{m,2j}=\frac{m}{\gcd(m,2j)}	=\frac{m}{\gcd(m,j)}=t_{m,j}$.
	Therefore, 
	\[
	c(2j,m)
	=\mu(t_{m,2j})\frac{\varphi(m)}{\varphi(t_{m,2j})}
	=\mu(t_{m,j})\frac{\varphi(m)}{\varphi(t_{m,j})}
	=c(j,m).
	\]
	
	For (4), let $t_{r,j}=\frac{r}{\gcd(r,j)}$ and  $t_{s,j}=\frac{s}{\gcd(s,j)}$.
	Since $\gcd(r,s)=1$, we have $\gcd(rs,j)=\gcd(r,j)\gcd(s,j)$, and thus $t_{rs,j}=\frac{rs}{\gcd(rs,j)}=\frac{r}{\gcd(r,j)}\cdot \frac{s}{\gcd(s,j)}=t_{r,j}t_{s,j}$.
	Moreover, $\gcd(t_{r,j},t_{s,j})=1$. Since both $\mu$ and $\varphi$ are multiplicative on coprime arguments, we obtain
	\[
	c(j,rs)
	=\mu(t_{rs,j})\frac{\varphi(rs)}{\varphi(t_{rs,j})}
	=\mu(t_{r,j})\mu(t_{s,j})
	\frac{\varphi(r)\varphi(s)}{\varphi(t_{r,j})\varphi(t_{s,j})}
	=c(j,r)c(j,s).
	\]
	This completes the proof.
\end{proof}

\begin{lem}\label{lem:basic lem 17}
	Let \(m\) and \(j\) be positive integers with \(j\) even. Then
	\[
	c(j,m)=
	\begin{cases}
		c \left(\frac{j}{2},m\right), & \text{if}\  v_2(m)=0,\\[5pt]
		c \left(\frac{j}{2},\frac{m}{2}\right), & \text{if}\ v_2(m)=1,\\[5pt]
		2\,c \left(\frac{j}{2},\frac{m}{2}\right), & \text{if}\ v_2(m)\ge 2.
	\end{cases}
	\]
\end{lem}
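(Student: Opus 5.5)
Write \(j=2j'\). The plan is to work directly with the closed form \eqref{equation:Ramanujan sum}, \(c(j,m)=\mu(t_{m,j})\varphi(m)/\varphi(t_{m,j})\). We reduce everything to a comparison of \(t_{m,j}\) with \(t_{m/2,j'}\) (or \(t_{m,j'}\)) and then use multiplicativity. Where convenient, we can instead factor \(m=2^e m'\) with \(m'\) odd. By Lemma~\ref{lem:ramanujan-odd}(4), this splits \(c(j,m)=c(j,2^e)c(j,m')\) and \(c(j',m)=c(j',2^e)c(j',m')\), so the odd parts are handled at once.

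\textbf{Case \(v_2(m)=0\).} This is exactly Lemma~\ref{lem:ramanujan-odd}(3) with \(j'\) in place of \(j\): \(c(2j',m)=c(j',m)\).

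\textbf{Cases \(e=v_2(m)\ge 1\).} Write \(m=2^e m'\). The odd factor satisfies \(c(2j',m')=c(j',m')\) by Lemma~\ref{lem:ramanujan-odd}(3), and \(m/2=2^{e-1}m'\). It therefore suffices to prove the \(2\)-power identity
\[
c(2j',2^e)=\begin{cases} c(j',1)=1,& e=1,\\ 2\,c(j',2^{e-1}),& e\ge2.\end{cases}
\]
Multiplying this by the odd part via Lemma~\ref{lem:ramanujan-odd}(4) then gives the claimed formula.

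For \(e=1\), \(c(2j',2)=\mu(1)\varphi(2)/\varphi(1)=1\), since \(t_{2,2j'}=1\). For \(e\ge2\), note that \(\gcd(2^e,2j')=2\gcd(2^{e-1},j')\). Hence \(t_{2^e,2j'}=t_{2^{e-1},j'}=:t\), a power of \(2\). The formula gives
\[
c(2j',2^e)=\mu(t)\frac{\varphi(2^e)}{\varphi(t)}
=2\mu(t)\frac{\varphi(2^{e-1})}{\varphi(t)}
=2\,c(j',2^{e-1}),
\]
using \(\varphi(2^e)=2\varphi(2^{e-1})\), which holds precisely when \(e\ge2\). That completes the argument.

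There is no serious obstacle. The only point needing care is the boundary between \(e=1\) and \(e\ge2\). That boundary comes solely from whether \(\varphi(2^e)=2\varphi(2^{e-1})\) holds: it fails at \(e=1\), where \(\varphi(2)=\varphi(1)\). This failure is exactly why the factor \(2\) is absent in the middle case.

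One could avoid the multiplicative splitting entirely and argue globally with \(t_{m,j}=t_{m/2,j'}\) for even \(m\). Either way, one must check that \(\varphi(m)/\varphi(m/2)\) equals \(1\) or \(2\) according as \(v_2(m)=1\) or \(v_2(m)\ge2\).
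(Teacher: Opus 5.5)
Your proposal is correct and follows essentially the same route as the paper: write $m=2^e m'$ and split via Lemma~\ref{lem:ramanujan-odd}(4), handle the odd part with Lemma~\ref{lem:ramanujan-odd}(3), and compute the $2$-power factor as $c(2j',2)=1$ when $e=1$, and from $t_{2^e,2j'}=t_{2^{e-1},j'}$ together with $\varphi(2^e)=2\varphi(2^{e-1})$ when $e\ge 2$. The global variant you mention at the end, using $t_{m,j}=t_{m/2,j'}$ for even $m$ and the ratio $\varphi(m)/\varphi(m/2)$, is also valid and slightly shorter.
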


\begin{proof}
Write \(j=2t\) and \(m=2^s q\), where \(q\) is odd and \(s=v_2(m)\).
By Lemma \ref{lem:ramanujan-odd}-(4), we have $c(2t,m)=c(2t,2^s)c(2t,q)$.
Since \(q\) is odd, Lemma \ref{lem:ramanujan-odd}-(3) yields $c(2t,q)=c(t,q)$, and so $c(2t,m)=c(2t,2^s)c(t,q)$. 
	
If \(s=0\), then \(m=q\) and $v_2(m)=0$, we have $c(2t,m)=c(t,m)$.
	
If \(s=1\), then \(m=2q\) and $v_2(m)=1$, and since \(c(2t,2)=1\), we get $c(2t,m)=c(t,q)=c(t,\frac{m}{2})$.
		
If \(s\ge 2\), then $v_2(m)\ge 2$ and $t_{2^s,\,2t}
=
\frac{2^s}{\gcd(2^s,2t)}
=
\frac{2^{s-1}}{\gcd(2^{s-1},t)}
=
t_{2^{s-1},\,t}$.
Hence, by \eqref{equation:Ramanujan sum}, we obtain $c(2t,2^s)
=
\mu(t_{2^s,2t})\frac{\varphi(2^s)}{\varphi(t_{2^s,2t})}
=
\mu(t_{2^{s-1},t})\frac{2\varphi(2^{s-1})}{\varphi(t_{2^{s-1},t})}
=
2\,c(t,2^{s-1})$.
Hence $c(2t,m)=c(2t,2^s)c(t,q)=2c(t,2^{s-1})c(t,q)= 2c(t,\frac{m}{2})$.
This completes the proof.
\end{proof}

\begin{lem}\label{lem: c(j,d)}
Let \(j\) and \(d\) be positive integers. 
Then $c(j,d)=\sum\limits_{r\mid \gcd(j,d)} r\,\mu \left(\frac{d}{r}\right)$.
\end{lem}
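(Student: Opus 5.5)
The plan is to start from the definition \eqref{equation:definition} and split the sum over units by means of M\"obius inversion. Since $\sum_{r\mid \gcd(k,d)}\mu(r)$ equals $1$ when $\gcd(k,d)=1$ and $0$ otherwise, we may write
\[
c(j,d)=\sum_{k=1}^{d}\exp\Bigl(2\pi\mathrm{i}\frac{kj}{d}\Bigr)\sum_{r\mid \gcd(k,d)}\mu(r).
\]
Exchanging the two finite sums and putting $k=r\ell$ with $1\le \ell\le d/r$, this becomes
\[
c(j,d)=\sum_{r\mid d}\mu(r)\sum_{\ell=1}^{d/r}\exp\Bigl(2\pi\mathrm{i}\frac{\ell j}{d/r}\Bigr).
\]

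The inner sum is a complete geometric sum over the $(d/r)$-th roots of unity. It equals $d/r$ when $(d/r)\mid j$ and $0$ otherwise. Therefore
\[
c(j,d)=\sum_{\substack{r\mid d\\ (d/r)\mid j}}\mu(r)\,\frac dr .
\]

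Next we reindex by $r'=d/r$. As $r$ runs over the divisors of $d$ with $(d/r)\mid j$, the new index $r'$ runs exactly over the common divisors of $j$ and $d$, that is, over $r'\mid\gcd(j,d)$. Under this substitution the summand $\mu(r)\,d/r$ becomes $r'\mu(d/r')$, which gives the claimed formula
\[
c(j,d)=\sum_{r\mid\gcd(j,d)}r\,\mu\Bigl(\frac dr\Bigr).
\]

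There is no real obstacle in this argument. The only care needed is in the reindexing step, to check that the condition $(d/r)\mid j$ together with $r\mid d$ corresponds precisely to $r'\mid\gcd(j,d)$. As an alternative route, one could verify the identity multiplicatively from \eqref{equation:Ramanujan sum} on prime powers, using Lemma \ref{lem:ramanujan-odd}-(4) and the multiplicativity of the right-hand side in $d$ for coprime factors. The direct computation above is shorter, so that is the proof I would give.
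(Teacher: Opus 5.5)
Your proposal is correct and follows the paper's proof essentially step for step. Both use the M\"obius indicator identity for $\gcd(k,d)=1$, interchange the sums with $k=r\ell$, evaluate the complete root-of-unity sum as $d/r$ or $0$, and then reindex by $r'=d/r$ to get $r'\mid\gcd(j,d)$.
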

\begin{proof}
Using the identity (see \cite[Theorem 8.16]{Rosen})
$
\sum\limits_{r\mid \gcd(k,d)}\mu(r)=
\begin{cases}
	1,& \gcd(k,d)=1\\
	0,& \gcd(k,d)>1
\end{cases}
$ and \eqref{equation:definition}, we obtain
\[
c(j,d)=\sum_{\substack{1\le k\le d\\ \gcd(k,d)=1}} \exp(2\pi \mathrm{i} \frac{kj}{d})
=\sum_{k=1}^{d}\left(\sum_{r\mid \gcd(k,d)}\mu(r)\right)\exp(2\pi \mathrm{i} \frac{kj}{d})
=\sum_{r\mid d}\mu(r)\sum_{\substack{1\le k\le d\\ r\mid k}} \exp(2\pi \mathrm{i} \frac{kj}{d}).
\]
Write \(k=rt\). 
Then
\[
\sum_{\substack{1\le k\le d,~r\mid k}} \exp(2\pi \mathrm{i} \frac{kj}{d})
=\sum_{t=1}^{\frac dr} \exp(2\pi \mathrm{i} \frac{rtj}{d}).
\]
This is a sum of all \(\frac{d}{r}\)-th roots of unity. Hence it equals \(\frac{d}{r}\) if \(\frac{d}{r}\mid j\), and equals \(0\) otherwise.
Therefore $c(j,d)=\sum\limits_{\substack{r\mid d,~\frac{d}{r}\mid j}}\mu(r)\frac{d}{r}$.
Now let \(s=\frac{d}{r}\). 
Then \(s\mid d\) and \(s\mid j\), that is, \(s\mid \gcd(j,d)\). 
Therefore $c(j,d)=\sum\limits_{s\mid \gcd(j,d)} s\,\mu \left(\frac{d}{s}\right)$.
This completes the proof.\qedhere

\end{proof}

\subsection{Representations and characters of dihedral groups}

We briefly recall some standard notions from the representation theory of finite groups.
The reader is referred to \cite{Steinberg} for more details on representations of groups.

Let \(G\) be a finite group. 
A representation of \(G\) is a homomorphism $\rho:G\to GL(V)$, where \(V\) is a finite-dimensional complex vector space. 
The dimension of \(V\) is called the degree of \(\rho\). The character of \(\rho\) is the function $\chi_\rho:G\to \mathbb{C}$, $\chi_\rho(g)=\operatorname{Tr}(\rho(g))$, where ``Tr'' denotes the trace of a linear transformation. 
A representation is called irreducible if the only \(G\)-invariant subspaces of \(V\) are \(\{0\}\)
and \(V\).
We recall only the irreducible representations and characters of \(D_{2n}\) needed later, in the case where \(n\) is even.

\begin{lem}\label{lem:representations and character table of D2n}
Let \(n\) be even. 
The irreducible representations of \(D_{2n}=\langle a,b: a^n=b^2=1,b^{-1}ab=a^{-1}\rangle\) and their corresponding characters are listed in Tables \ref{tab:representations_Dn_even} and \ref{tab:char_table_D2n_even}, where \(\omega = \exp \left(\frac{2\pi i}{n}\right)\) and \(\chi_i\) denotes the character afforded by \(\rho^{(i)}\).
	\begin{table}[htbp]
		\centering
		\begin{tabular}{c|c|c}
			\hline
			& \(a^k\ (0 \le  k \le  n-1)\) & \(ba^k\ (0 \le  k \le  n-1)\) \\
			\hline
			\(\rho^{(1)}\) & \(1\) & \(1\) \\	
			\(\rho^{(2)}\) & \(1\) & \(-1\)\\ 	
			\(\rho^{(3)}\) & \((-1)^k\) & \((-1)^k\) \\
			\(\rho^{(4)}\) & \((-1)^k\) & \((-1)^{k+1}\) \\ 	
			\(\rho^{(l)}\  (5 \le  l \le  3+\frac{n}{2})\) & \(\begin{pmatrix}\omega^{(l-4)k} & 0 \\ 0 & \omega^{-(l-4)k}\end{pmatrix}\) & \(\begin{pmatrix}0 & \omega^{-(l-4)k} \\ \omega^{(l-4)k} & 0\end{pmatrix}\) \\
			\hline
		\end{tabular}
		\vspace{0.5em} 
		\caption{Representations of \(D_{2n}\) when \(n\) is even.}
		\label{tab:representations_Dn_even}
	\end{table}
	
	\begin{table}[htbp]
		\centering
		\begin{tabular}{c|c|c}
			\hline
			& \(a^k\ (0 \le  k \le  n-1)\) & \(ba^k\ (0 \le  k \le  n-1)\) \\
			\hline
			\(\chi_1\) & \(1\) & \(1\) \\ 	
			\(\chi_2\) & \(1\) & \(-1\) \\ 	
			\(\chi_3\) & \((-1)^k\) & \((-1)^k\) \\
			\(\chi_4\) & \((-1)^k\) & \((-1)^{k+1}\) \\ 	
			\(\chi_l\ (5 \le  l \le  3+\frac{n}{2})\) & \(2\cos\left(\frac{2(l-4)k\pi}{n}\right)\) & \(0\) \\
			\hline 
		\end{tabular}
		\vspace{0.5em} 
		\caption{The character table of \(D_{2n}\) when \(n\) is even.}
		\label{tab:char_table_D2n_even}
		\vspace{0.2em}
	\end{table}
\end{lem}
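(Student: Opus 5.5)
The plan is to verify the tables directly. I would check that each listed map is a homomorphism and is irreducible, then show the list is pairwise inequivalent and complete by a dimension count; the characters then come from taking traces.

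\textbf{Step 1: the listed maps are representations.} Since $D_{2n}$ is given by the presentation $\langle a,b: a^n=b^2=1,\ b^{-1}ab=a^{-1}\rangle$, it suffices to assign images to $a$ and $b$, check the three defining relations, and then read off $\rho(ba^k)=\rho(b)\rho(a)^k$.
\begin{itemize}
\item For $\rho^{(1)}$ and $\rho^{(2)}$ the relations are immediate.
\item For $\rho^{(3)}$ and $\rho^{(4)}$ we send $a\mapsto -1$ and $b\mapsto \pm1$. The relation $a^n=1$ becomes $(-1)^n=1$, which is exactly where the hypothesis that $n$ is even is used. The conjugation relation holds trivially, since $-1=(-1)^{-1}$ in an abelian image.
\item For $l\ge 5$, put $j=l-4\in\{1,\dots,\frac n2-1\}$ and define
\[
\rho(a)=\begin{pmatrix}\omega^{j}&0\\0&\omega^{-j}\end{pmatrix},\qquad
\rho(b)=\begin{pmatrix}0&1\\1&0\end{pmatrix}.
\]
Then $\rho(a)^n=I$ and $\rho(b)^2=I$. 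Conjugating a diagonal matrix by the swap matrix exchanges its diagonal entries, so $\rho(b)^{-1}\rho(a)\rho(b)=\rho(a)^{-1}$.
\end{itemize}
Multiplying out gives $\rho(b)\rho(a)^k=\begin{pmatrix}0&\omega^{-jk}\\ \omega^{jk}&0\end{pmatrix}$, which matches Table \ref{tab:representations_Dn_even}.

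\textbf{Step 2: irreducibility and pairwise inequivalence.}
\begin{itemize}
\item The one-dimensional representations are trivially irreducible.
\item For the two-dimensional ones, note that $\omega^{j}\neq\omega^{-j}$, because $0<2j<n$. Hence $\rho(a)$ has two distinct eigenvalues, and its only one-dimensional invariant subspaces are the two coordinate axes. The matrix $\rho(b)$ swaps these axes, so no line is $G$-invariant.
\item Inequivalence follows by comparing characters, since equivalent representations have equal characters. The four linear characters differ in their values on $a$ or on $b$, and all of them have degree $1$ while the others have degree $2$. For two two-dimensional representations with parameters $j\ne j'$ in $\{1,\dots,\frac n2-1\}$, we have $2\cos(2\pi j/n)\ne 2\cos(2\pi j'/n)$, because cosine is injective on $(0,\pi)$.
\end{itemize}

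\textbf{Step 3: completeness and the character table.} The sum of squared degrees is
\[
4\cdot 1^2+\left(\tfrac n2-1\right)\cdot 2^2=2n=|D_{2n}|.
\]
By the standard fact that the squared degrees of a complete set of pairwise inequivalent irreducible representations sum to $|G|$ (see \cite{Steinberg}), the list is complete. Table \ref{tab:char_table_D2n_even} then follows by taking traces: on $a^k$ we get $\omega^{jk}+\omega^{-jk}=2\cos(2jk\pi/n)$, and on $ba^k$ we get $0$ since that matrix has zero diagonal.

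No step is a genuine obstacle; the only points needing care are the use of the parity of $n$ in Step 1 and the injectivity of cosine in Step 2.
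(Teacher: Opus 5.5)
Your proposal is correct and complete. The paper does not prove this lemma at all. It recalls the tables as a standard fact from the representation theory of finite groups, with a pointer to Steinberg's book, and only the character values are used later, through Lemma~\ref{lem:eigenvalues of quasi-abelian graph} and the proof of Corollary~\ref{core:eigenvalues of D2n}.

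Your argument is the textbook verification that the citation stands in for. It has four parts:
\begin{enumerate}
\item well-definedness, by checking the defining relations;
\item irreducibility of the two-dimensional representations, because $\rho(a)$ has distinct eigenvalues $\omega^{\pm j}$ (as $0<2j<n$) and $\rho(b)$ swaps the two eigenlines;
\item pairwise inequivalence, read off from the character values;
\item completeness, from $4\cdot 1^2+(\frac n2-1)\cdot 2^2=2n$.
\end{enumerate}

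Compared with the bare citation, your check confirms the exact matrix conventions of Table~\ref{tab:representations_Dn_even}, for example the placement of $\omega^{-jk}$ in the $(1,2)$ entry of $\rho(ba^k)$. It also shows exactly where the evenness of $n$ is needed: to make $a\mapsto -1$ well defined.

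You could also get completeness from the conjugacy classes listed at the start of Section~3. For even $n$ there are $\frac n2+3$ of them, which matches the $\frac n2+3$ pairwise distinct irreducible characters you exhibit.
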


\section{Eigenvalues of Cayley graphs in terms of Ramanujan sums}\label{sec:Eigenvalues of Cayley Graphs on dihedral groups in terms of Ramanujan sum}

In this section, we first determine the eigenvalues of Cayley graphs over dihedral groups with conjugation-closed connection sets. 
When such a graph is integral, these eigenvalues can be expressed in terms of Ramanujan sums.
Separately, we use Ramanujan sums and M\"obius inversion on the divisor lattice to derive a structural property of odd-order integral circulant graphs that will be needed later.

We begin by describing the form of an inverse-closed and conjugation-closed connection set \(S\) in \(D_{2n}=\langle a,b: a^n=b^2=1,b^{-1}ab=a^{-1}\rangle\), where $n$ is even.
The conjugacy classes of \(D_{2n}\) are $\{1_{D_{2n}}\}$ together with reflection conjugacy classes 
\[
\mathcal R_0=\{ba^{2j}:0\le  j\le  \frac{n}{2}-1\},\quad
\mathcal R_1=\{ba^{2j+1}:0\le  j\le  \frac{n}{2}-1\},
\]
and rotation conjugacy classes
\[
\{a^{\frac{n}{2}}\},\quad
\{a^k,a^{-k}\},\quad 1\le  k\le  \frac{n}{2}-1.
\]
Since \(1_{D_{2n}}\notin S\), any inverse-closed and conjugation-closed connection set $S$ can be expressed as
\begin{equation}\label{equ:S}
S=\left(\bigcup_{k\in K}\{a^k,a^{-k}\}\right)\cup\bigl(\epsilon\{a^{\frac{n}{2}}\}\bigr)\cup(\alpha \mathcal R_0)\cup(\beta \mathcal R_1),	
\end{equation}
where $K\subseteq\{1,\dots,\frac{n}{2}-1\}$,
$\epsilon,\alpha,\beta\in\{0,1\}$.
For convenience, we always write $S=S_0\cup S_1$, where
\begin{equation}\label{equ:S0S1}
 S_0 = S \cap \langle a \rangle=
\left(
\bigcup_{k\in K}\{a^k,a^{-k}\}
\right)
\cup
\bigl(\epsilon\{a^{\frac{n}{2}}\}\bigr),\quad S_1 = S \cap b\langle a \rangle=(\alpha \mathcal R_0)
\cup
(\beta \mathcal R_1)
\end{equation}

Recall that \(\mathrm{Cay}(G,S)\) is connected if and only if \(\langle S\rangle=G\).
Thus, in the notation above, the case \(\alpha=\beta=0\) is disconnected. 
If \(\alpha+\beta=1\), then connectedness is equivalent to \(S_0\) containing an odd power of \(a\), or equivalently, to \(D\) containing an odd divisor when \(S_0=\bigcup\limits_{d\in D}G_n(d)\).

Next we recall the character-theoretic eigenvalue formula for Cayley graphs with conjugation-closed connection sets and an integrality criterion for such graphs.

\begin{lem}\cite{Zieschang}\label{lem:eigenvalues of quasi-abelian graph}
Let $n\ge  3$ be even and $G=D_{2n}$ whose character table is shown in Tables $\ref{tab:representations_Dn_even}$ and $\ref{tab:char_table_D2n_even}$.
Let $S\subseteq G$ be conjugation-closed. 	
Then the eigenvalues of Cayley graph \( \operatorname{Cay}(G, S) \) are given by
\begin{equation*}\label{eq:eigenvalue}
	\lambda_k = \frac{1}{d_k} \sum_{g \in S} \chi_k(g), \quad 1 \le  k \le  3+\frac{n}{2},
\end{equation*}
where $d_k$ denotes the degree of the irreducible representation affording $\chi_k$ and each \( \lambda_k \) has multiplicity \( d_k^2 \).
\end{lem}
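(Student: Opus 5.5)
This lemma is a standard fact about normal (conjugation-closed) connection sets, so the plan is the usual group-algebra argument. The first step is to write the adjacency matrix of $\operatorname{Cay}(G,S)$ as the operator of multiplication by the class sum. Let $\mathbb{C}[G]$ be the group algebra and $\sigma=\sum_{s\in S}s$. The adjacency matrix $A$ acts on $\mathbb{C}^G\cong\mathbb{C}[G]$ by $x\mapsto \sigma x$, because $(x,y)$ is an arc exactly when $yx^{-1}\in S$. Hence $A$ is the matrix of left multiplication by $\sigma$ in the left regular representation.

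Next, decompose the regular representation as $\bigoplus_k d_k\,\rho^{(k)}$, using the irreducible representations listed in Lemma~\ref{lem:representations and character table of D2n}. Under this decomposition, $A$ is similar to $\bigoplus_k \bigl(\rho^{(k)}(\sigma)\bigr)^{\oplus d_k}$, where $\rho^{(k)}(\sigma)=\sum_{s\in S}\rho^{(k)}(s)$.

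The key step uses the hypothesis that $S$ is conjugation-closed, so $\sigma$ is a sum of class sums and lies in the centre of $\mathbb{C}[G]$. For each $g\in G$, $\rho^{(k)}(\sigma)$ therefore commutes with $\rho^{(k)}(g)$. By Schur's lemma, $\rho^{(k)}(\sigma)=\lambda_k I_{d_k}$ for some scalar $\lambda_k$. Taking traces gives $d_k\lambda_k=\sum_{s\in S}\chi_k(s)$, which is the stated formula. Each block $\rho^{(k)}(\sigma)$ has size $d_k$ and appears $d_k$ times, so $\lambda_k$ contributes $d_k^2$ to the spectrum, counted with multiplicity.

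For $D_{2n}$ one can bypass Schur and check scalarity directly from Table~\ref{tab:representations_Dn_even}. For a two-dimensional $\rho^{(l)}$, the rotation classes $\{a^k,a^{-k}\}$ give $\mathrm{diag}(\omega^{(l-4)k}+\omega^{-(l-4)k},\,\omega^{(l-4)k}+\omega^{-(l-4)k})$, which is scalar. The central element $a^{n/2}$ maps to $(-1)^{l-4}I$. Each reflection class $\mathcal R_0$ or $\mathcal R_1$ yields an off-diagonal matrix whose entries are $\sum_j\omega^{\pm 2(l-4)j}$ (times a unit in the $\mathcal R_1$ case). These sums vanish for $1\le l-4\le n/2-1$, since then $\omega^{2(l-4)}\ne1$ is an $\frac n2$-th root of unity. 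This matches $\chi_l=0$ on reflections.

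The only real obstacle is bookkeeping multiplicities: the counts $d_k^2$ must add up to $2n$. Indeed, $4\cdot1+(\frac n2-1)\cdot 4=2n$, which confirms that the listed representations exhaust the spectrum.
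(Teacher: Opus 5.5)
Your proof is correct. The paper does not prove this lemma; it cites it from Zieschang. Your argument is the standard proof of that result: $\sigma=\sum_{s\in S}s$ is central in $\mathbb{C}[G]$, so by Schur's lemma it acts as the scalar $\frac{1}{d_k}\sum_{s\in S}\chi_k(s)$ on each of the $d_k$ copies of $\rho^{(k)}$ in the regular representation. Your check against Table~\ref{tab:representations_Dn_even} is a valid extra confirmation specific to $D_{2n}$.

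One step is left implicit. With $A_{x,y}=1$ if and only if $yx^{-1}\in S$, one gets $Ae_y=\sum_{s\in S}e_{s^{-1}y}$. So identifying $A$ with left multiplication by $\sigma$ uses $S^{-1}=S$. This holds automatically here, because every conjugacy class of $D_{2n}$ is closed under inversion.
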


\begin{lem}\cite[Corollary 4.1]{Perfect state transfer on Cayley graphs over dihedral group}\label{lem:D2n integral-0}
Let \(n\) be a positive integer and let \(S\subseteq D_{2n}\) be conjugation-closed.
Then \( \Gamma=\mathrm{Cay}(D_{2n},S) \) is integral if and only if \( S_0^\ell = S_0 \) for all \( \ell \in \mathbb{Z}_n^* \).
\end{lem}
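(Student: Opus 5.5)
The statement just before this point is Lemma~\ref{lem:D2n integral-0}: $\mathrm{Cay}(D_{2n},S)$ is integral if and only if $S_0^\ell=S_0$ for all $\ell\in\mathbb{Z}_n^*$. I would prove it with the eigenvalue formula of Lemma~\ref{lem:eigenvalues of quasi-abelian graph}, read off from the character table of Lemma~\ref{lem:representations and character table of D2n}. The idea is to split the spectrum into the contribution of $S_1$ and that of $S_0$. The reflection classes contribute only to the linear characters, because $\chi_l$ vanishes on $b\langle a\rangle$ for $l\ge 5$. Hence $\lambda_1,\dots,\lambda_4$ are sums of $\pm1$ and are automatically integers. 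For $5\le l\le 3+\frac n2$, write $j=l-4$. Since $S_0$ is inverse-closed,
\[
\lambda_l=\tfrac12\sum_{a^k\in S_0}2\cos\left(\tfrac{2\pi jk}{n}\right)=\sum_{a^k\in S_0}\omega^{jk}.
\]
This is exactly the eigenvalue $\mu_j$ of the circulant graph $\mathrm{Cay}(\mathbb{Z}_n,S_0')$, where $S_0'=\{k:a^k\in S_0\}$. The remaining circulant eigenvalues are $\mu_0=|S_0|$ and $\mu_{n/2}=\sum_{a^k\in S_0}(-1)^k$; these appear inside $\lambda_1$ and $\lambda_3$ up to integer shifts coming from $S_1$. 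So integrality of $\Gamma$ is equivalent to integrality of all $\mu_j$, $0\le j\le n-1$, using $\mu_{n-j}=\mu_j$. For odd $n$ the same reduction works with the odd-$n$ character table, which has two linear characters and two-dimensional characters $2\cos(2\pi jk/n)$ vanishing on reflections. I would remark that the argument is identical, since the statement is for general $n$.

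It then remains to show that the circulant $\mathrm{Cay}(\mathbb{Z}_n,S_0')$ is integral iff $S_0'$ is a union of the sets $G_n(d)=\{k:\gcd(k,n)=d\}$, that is, iff $\ell S_0'=S_0'$ for all units $\ell$.

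\emph{Sufficiency.} If $S_0'=\bigcup_{d\in D}G_n(d)$, then $\sum_{k\in G_n(d)}\omega^{jk}=c(j,n/d)$ is a Ramanujan sum. By \eqref{equation:Ramanujan sum} it is an integer.

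\emph{Necessity.} Each $\mu_j$ lies in $\mathbb{Q}(\omega)$, and the Galois automorphism $\sigma_\ell:\omega\mapsto\omega^\ell$, $\ell\in\mathbb{Z}_n^*$, sends $\mu_j$ to $\sum_{k\in S_0'}\omega^{j\ell k}$. If every $\mu_j$ is an integer, it is fixed by $\sigma_\ell$, so
\[
\sum_{k\in S_0'}\omega^{j k}=\sum_{k\in \ell S_0'}\omega^{jk}\quad\text{for all }j.
\]
The vectors $(\omega^{jk})_j$ are linearly independent; this is invertibility of the discrete Fourier transform. Hence the indicator functions of $S_0'$ and $\ell S_0'$ coincide, so $\ell S_0'=S_0'$, which translates to $S_0^\ell=S_0$.

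The only delicate point is the bookkeeping in the first step. One has to confirm that every circulant eigenvalue $\mu_j$ really occurs, either directly as some $\lambda_l$ or as $\lambda_1$ or $\lambda_3$ minus an integer. One also has to confirm that the reflection part never introduces irrationality. Both follow directly from the tables, so I expect no genuine obstacle; the Galois/Fourier argument in the necessity step is standard.
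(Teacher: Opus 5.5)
The paper gives no proof of this lemma; it is quoted from \cite[Corollary 4.1]{Perfect state transfer on Cayley graphs over dihedral group}, so there is no in-paper argument to compare with. Your proposal is a correct, self-contained proof.

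Its first half is the same computation the paper does in the proof of Corollary~\ref{core:eigenvalues of D2n}. The reflection classes feed only the linear characters, and for the two-dimensional characters $\lambda_l=\sum_{a^k\in S_0}\omega^{(l-4)k}$. Its second half is the standard Galois/discrete-Fourier proof of So's characterization of integral circulants. Together they reduce the dihedral question to the cyclic one.

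Two small points.

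First, your phrase ``that is, iff $\ell S_0'=S_0'$ for all units $\ell$'' hides the nontrivial direction. To pass from unit-invariance to a union of classes $G_n(d)$, the units of $\mathbb{Z}_n$ must act transitively on $G_n(d)$. That requires every unit modulo $\frac nd$ to lift to a unit modulo $n$. The paper supplies exactly this lifting in the converse half of Corollary~\ref{core:eigenvalues of D2n}. You can also bypass it: if $\ell S_0'=S_0'$ for all $\ell\in\mathbb{Z}_n^*$, then every $\mu_j$ is fixed by $\mathrm{Gal}(\mathbb{Q}(\omega)/\mathbb{Q})$. So $\mu_j$ is rational, and being an algebraic integer, it is an integer. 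That makes both directions pure Galois theory. The Ramanujan-sum route has the advantage of producing the explicit formula $\eta_h=\sum_{d\in D}c(h,\frac nd)$ that the rest of the paper relies on.

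Second, for odd $n$ you need the eigenvalue formula for conjugation-closed connection sets in general, because Lemma~\ref{lem:eigenvalues of quasi-abelian graph} is stated only for even $n$. The formula holds for every finite group, by Schur's lemma applied to $\sum_{g\in S}\rho(g)$, so you only need to cite it. In the paper only even $n$ matters anyway, by Lemma~\ref{lem:iff of PST}.
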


\begin{core}\label{core:eigenvalues of D2n}
	With the notation in Lemma \ref{lem:D2n integral-0}, \( \Gamma\) is integral if and only if there exists a set of positive divisors \(D \subseteq \{ d: d \mid n, d<n \}\) such that $S_0 = \bigcup\limits_{d \in D} G_n(d)$,
	where $G_n(d) =  \{ a^k : \gcd(k, n) =d,\ 1\le  k<  n\}$.
	Moreover, if $\Gamma$ is integral, then its eigenvalues are given by
	\begin{equation}
		\begin{split}
			\lambda_1 &=A+\frac n2(\alpha+\beta),~~\lambda_2 = A-\frac n2(\alpha+\beta),\\
			\lambda_3 &=T+\frac n2(\alpha-\beta),~~\lambda_4 =T+\frac n2(-\alpha+\beta),\\
			\eta_h &= \sum_{d \in D} c(h, \frac nd),\quad 1 \le  h \le  \frac{n}{2}-1,
		\end{split}
	\end{equation}
	where $A=|S_0|$, $T=\sum\limits_{a^k\in S_0}(-1)^k$, and \(c(\cdot,\cdot)\) denotes the Ramanujan sum.
\end{core}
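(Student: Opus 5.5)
The statement to prove is Corollary~\ref{core:eigenvalues of D2n}. It has two parts: an integrality criterion, which comes from Lemma~\ref{lem:D2n integral-0}, and explicit eigenvalues, which come from Lemma~\ref{lem:eigenvalues of quasi-abelian graph} together with Table~\ref{tab:char_table_D2n_even}.

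\textbf{Integrality.} By Lemma~\ref{lem:D2n integral-0}, $\Gamma$ is integral if and only if $S_0^\ell=S_0$ for all $\ell\in\mathbb{Z}_n^*$.
\begin{itemize}
\item The orbits of $\mathbb{Z}_n^*$ acting on $\langle a\rangle\setminus\{1\}$ by $a^k\mapsto a^{k\ell}$ are exactly the sets $G_n(d)$, for $d\mid n$ with $d<n$. Indeed, $\gcd(k\ell,n)=\gcd(k,n)$. Conversely, if $\gcd(k,n)=\gcd(k',n)=d$, then $k/d$ and $k'/d$ are units modulo $n/d$. Since the reduction $\mathbb{Z}_n^*\to\mathbb{Z}_{n/d}^*$ is surjective, some $\ell\in\mathbb{Z}_n^*$ satisfies $k'\equiv k\ell \pmod n$.
\item Hence $S_0$ is $\mathbb{Z}_n^*$-invariant if and only if it is a union of such orbits, which gives the set $D$.
\end{itemize}

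\textbf{Linear characters.} Apply Lemma~\ref{lem:eigenvalues of quasi-abelian graph} with $d_k=1$ for $k\le 4$.
\begin{itemize}
\item Summing $\chi_1$ over $S$ gives $|S_0|+|S_1|$. Since $|\mathcal R_0|=|\mathcal R_1|=n/2$, we get $|S_1|=\frac n2(\alpha+\beta)$, so $\lambda_1=A+\frac n2(\alpha+\beta)$.
\item The character $\chi_2$ negates the reflection part, so $\lambda_2=A-\frac n2(\alpha+\beta)$.
\item For $\chi_3$: the rotation part gives $T$ by definition. On $\mathcal R_0$ the value is $(-1)^{2j}=1$, and on $\mathcal R_1$ it is $-1$. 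This yields $\lambda_3=T+\frac n2(\alpha-\beta)$.
\item The character $\chi_4$ flips these reflection signs, giving $\lambda_4=T+\frac n2(-\alpha+\beta)$.
\end{itemize}

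\textbf{Two-dimensional characters.} Here $d_l=2$, and $\chi_l$ vanishes on reflections. Write $h=l-4\in\{1,\dots,\frac n2-1\}$. Then
\[
\eta_h=\frac12\sum_{a^k\in S_0}\bigl(\omega^{hk}+\omega^{-hk}\bigr)=\sum_{a^k\in S_0}\omega^{hk},
\]
where the second equality holds because $S_0$ is inverse-closed.

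Split this sum over $d\in D$. Elements of $G_n(d)$ are $a^{dm}$ with $1\le m\le n/d$ and $\gcd(m,n/d)=1$, so
\[
\sum_{a^k\in G_n(d)}\omega^{hk}=\sum_{\gcd(m,n/d)=1}\exp\!\Bigl(2\pi\mathrm{i}\,\frac{hm}{n/d}\Bigr)=c\Bigl(h,\frac nd\Bigr)
\]
by \eqref{equation:definition}.

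\textbf{Main obstacle.} The only real care needed is the surjectivity of $\mathbb{Z}_n^*\to\mathbb{Z}_{n/d}^*$, used in the orbit description. This is standard and follows from the Chinese remainder theorem, or from Dirichlet's theorem on primes in arithmetic progressions. The remaining steps are direct bookkeeping.
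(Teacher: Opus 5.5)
Your proposal is correct and follows essentially the same route as the paper. Integrality comes from identifying the $\mathbb{Z}_n^*$-orbits with the sets $G_n(d)$; the paper's unexplained choice of $\ell\in\mathbb{Z}_n^*$ with $\ell\equiv u \pmod{n_1}$ is exactly the surjectivity of $\mathbb{Z}_n^*\to\mathbb{Z}_{n/d}^*$ that you make explicit. The eigenvalues then come from evaluating the character sums of Lemma~\ref{lem:eigenvalues of quasi-abelian graph} directly and reducing each $G_n(d)$-sum to $c(h,\frac nd)$, where your use of inverse-closedness of $S_0$ as a whole, rather than of each $G_n(d)$, changes nothing.
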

\begin{proof}
	Suppose first that $S_0=\bigcup\limits_{d\in D}G_n(d)$, where $G_n(d)=\{a^k:\gcd(k,n)=d,\ 1\le  k<  n\}$.
	Let $\ell\in\mathbb Z_n^*$. For any $a^k\in G_n(d)$, since $\gcd(\ell,n)=1$, we have $\gcd(\ell k,n)=\gcd(k,n)=d$.
	Hence $(a^k)^\ell=a^{\ell k}\in G_n(d)$, and so $G_n(d)^\ell\subseteq G_n(d)$. Applying the same argument to $\ell^{-1}$ yields $G_n(d)\subseteq G_n(d)^\ell$. Thus $G_n(d)^\ell=G_n(d)$ for every $d\mid n$, and therefore
	\[
	S_0^\ell=\bigcup_{d\in D}G_n(d)^\ell=\bigcup_{d\in D}G_n(d)=S_0.
	\]
	It follows from Lemma \ref{lem:D2n integral-0} that \( \Gamma\) is integral.
	
	Conversely, assume that $S_0^\ell=S_0$ for all $\ell\in\mathbb Z_n^*$. 
	Let
	\[
	D=\{d:\text{there exists }a^k\in S_0\text{ with }\gcd(k,n)=d\}.
	\]
	Clearly, $S_0\subseteq\bigcup\limits_{d\in D}G_n(d)$. To prove the reverse inclusion, take $a^k\in S_0$ and $a^m\in G$ with $\gcd(m,n)=\gcd(k,n)=d$. Write $k=dk_1$, $m=dm_1$, $n=dn_1$, where $\gcd(k_1,n_1)=\gcd(m_1,n_1)=1$.
	Then there exists $u\in\mathbb Z_{n_1}^*$ such that $uk_1\equiv m_1 \pmod{n_1}$.
	Choose $\ell\in\mathbb Z_n^*$ satisfying $\ell\equiv u\pmod{n_1}$. 
	Then $\ell k\equiv m \pmod n$, and hence $(a^k)^\ell=a^m$. 
	Since $a^k\in S_0$ and $S_0^\ell=S_0$, we obtain $a^m\in S_0$. 
	Therefore $G_n(d)\subseteq S_0$ for every $d\in D$, and so $S_0=\bigcup\limits_{d\in D}G_n(d)$.
	Hence we complete the proof of first statement.
	
	For second statement, since $S$ is conjugation-closed, $S$ is as described in \eqref{equ:S}.
	Let $A=|S_0|$, $T=\sum\limits_{a^k\in S_0}(-1)^k$.
    Then by Lemma \ref{lem:eigenvalues of quasi-abelian graph}, Tables \ref{tab:representations_Dn_even} and \ref{tab:char_table_D2n_even}, we have
   \begin{equation*}
   	\begin{split}
   	\lambda_1 &= \sum\limits_{g \in S} 1 = A+\frac n2(\alpha+\beta),\quad
   	\lambda_2 = \sum\limits_{a^k \in S_0} 1 + \sum\limits_{b a^k \in S_1} (-1)= A-\frac n2(\alpha+\beta),\\
   	\lambda_3 &= \sum\limits_{a^k \in S_0} (-1)^k + \sum\limits_{b a^k \in S_1} (-1)^k= T + \frac{n}{2}(\alpha - \beta),\\
   	\lambda_4 &= \sum\limits_{a^k \in S_0} (-1)^{k} + \sum\limits_{b a^k \in S_1} (-1)^{k+1}= T + \frac{n}{2}(-\alpha + \beta),
   	\end{split}
   \end{equation*}
    and for \(5 \le  l \le  3+\frac{n}{2}\), set $h=l-4$,
    \begin{equation*}
    	\lambda_l = \frac{1}{2} \sum_{g \in S} \chi_{\rho^{(l)}}(g) = \frac{1}{2} \sum_{a^k \in S_0} (\omega^{hk} + \omega^{-hk}).
    \end{equation*}
If $\Gamma$ is integral, then by first statement, there exists a set $D\subseteq \{d:d\mid n, d<n\}$ such that $S_0=\bigcup\limits_{d\in D}G_n(d)$,
where $G_n(d)=\{a^k:\gcd(k,n)=d,\ 1\le  k<  n\}$.
Hence
\[
\lambda_l
=\frac12\sum_{a^k\in S_0}(\omega^{hk}+\omega^{-hk})
=\frac12\sum_{d\in D}\sum_{a^k\in G_n(d)}(\omega^{hk}+\omega^{-hk}).
\]
For a fixed $d\in D$, if $a^k\in G_n(d)$, then $a^{-k}\in G_n(d)$.
Therefore
$$\sum_{a^k\in G_n(d)}(\omega^{hk}+\omega^{-hk})=2\sum_{a^k\in G_n(d)}\omega^{hk}=2c(h,\frac nd),$$
and so $\lambda_l
=\sum\limits_{d\in D}c(h,\frac nd)$.
Setting $\eta_h:=\lambda_{h+4}$ for $1\le  h\le  \frac n2-1$, the proof is complete.
\end{proof}

For later use, we adopt the standard notation for \emph{integral circulant graphs}.
Let $n$ be a positive integer and $E\subseteq \{d:d\mid n,\ 1\le d<n\}$.
Define
\[
\mathrm{ICG}_n(E):=\mathrm{Cay} \left(\mathbb Z_n,\bigcup_{d\in E} G_n(d)\right),
\]
where $G_n(d)=\{a^k:\gcd(k,n)=d,\ 1\le k\le n-1\}$.
It is well known that the eigenvalues of \(\mathrm{ICG}_n(E)\) are
\[
\vartheta_j=\sum_{d\in E} c \left(j,\frac nd\right),\quad 0\le j\le n-1.
\]
In particular, if \(X=\mathrm{Cay}(\langle a\rangle,S_0)\) and $S_0=\bigcup\limits_{d\in D}G_n(d)$, then \(X=\mathrm{ICG}_n(D)\).
Thus, throughout the rest of the paper, whenever we consider integral circulant graphs, their eigenvalues will always be understood through the above Ramanujan sum expression.

We conclude this section with a property of integral circulant graphs that will be very useful in the analysis of the case \(n\equiv 2 \pmod 4\).

\begin{lem}\label{lem:auxiliary-Km}
	Let \(m\) be odd, let \(E\subseteq \{e:e\mid m,\ e<m\}\), and let $X=\mathrm{ICG}_m(E)$.
	Let $\tau_j=\sum\limits_{e\in E}c(j,\frac me)$, $0\le j\le m-1$, be the eigenvalues of $X$.
	If every \(\tau_j\) with \(1\le j\le m-1\) is odd, then $E=\{\,e:e\mid m,\ e<m\,\}$.
	Equivalently, $X=K_m$.
\end{lem}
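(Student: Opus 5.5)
The plan is to pass to the complementary divisor set and argue modulo $2$ in the group algebra $\mathbb{F}_2[\mathbb Z_m]$. Because $m$ is odd, this algebra is semisimple, and that is the key point. Put $E'=\{e:e\mid m,\ e<m\}\setminus E$ and $S'=\bigcup_{e\in E'}G_m(e)$. It suffices to show $S'=\emptyset$, i.e.\ $E'=\emptyset$.

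\emph{Step 1 (eigenvalues of the complement are all even).} The full set $\{e:e\mid m,\ e<m\}$ gives $\mathrm{ICG}_m=K_m$, whose eigenvalues are $\sum_{e\mid m,\,e<m}c(j,\frac me)=-1$ for $1\le j\le m-1$. Hence the eigenvalues $\sigma_j=\sum_{e\in E'}c(j,\frac me)$ of $\mathrm{ICG}_m(E')$ satisfy $\sigma_j=-1-\tau_j$ for $1\le j\le m-1$, and these are even by hypothesis. For $j=0$ we have $\sigma_0=|S'|=\sum_{e\in E'}\varphi(\frac me)$. Each $\frac me>1$ is odd, so each $\varphi(\frac me)$ is even, and thus $\sigma_0$ is even too. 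Therefore every $\sigma_j$, $0\le j\le m-1$, is even.

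\emph{Step 2 (translate to a polynomial statement).} Let $f(x)=\sum_{a^k\in S'}x^k\in\mathbb Z[x]$, a polynomial with $0/1$ coefficients and degree $<m$. Write $\omega=\exp(2\pi\mathrm{i}/m)$. The eigenvalues of the circulant $\mathrm{Cay}(\mathbb Z_m,S')$ are exactly $f(\omega^j)=\sigma_j$, so $f(\omega^j)\in 2\mathbb Z\subseteq 2\mathbb Z[\omega]$ for all $j$. Fix a prime ideal $\mathfrak P$ of $\mathbb Z[\omega]$ above $2$ and reduce modulo $\mathfrak P$. The images $\bar\omega^j$ are then roots of $\bar f\in\mathbb F_2[x]$.

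\emph{Step 3 (separability forces $\bar f=0$).} Since $m$ is odd, $x^m-1$ is separable over $\mathbb F_2$, because its derivative $mx^{m-1}$ is coprime to it. Also $x^m-1=\prod_j(x-\omega^j)$ holds in $\mathbb Z[\omega][x]$. Reducing modulo $\mathfrak P$ shows that the $\bar\omega^j$, $0\le j\le m-1$, are $m$ distinct roots of $x^m-1$ in $\mathbb Z[\omega]/\mathfrak P$. So $\bar f$ has at least $m$ distinct roots while $\deg\bar f<m$, which forces $\bar f=0$. All coefficients of $f$ are $0$ or $1$, so $f=0$. Hence $S'=\emptyset$ and $E'=\emptyset$, giving $E=\{e:e\mid m,\ e<m\}$ and $X=K_m$.

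The main obstacle is Step 3: the reduction from "all eigenvalues even" to "the connection set is empty". A naive attempt would try to invert Ramanujan sums modulo $2$ through the Möbius formula of Lemma~\ref{lem: c(j,d)}, and the parities of $\varphi(d)/\varphi(t)$ make that messy. The reduction-modulo-$\mathfrak P$ argument avoids this by using only two facts: $m$ is odd, so the $m$-th roots of unity stay distinct in characteristic $2$, and the coefficients of $f$ are $0$ or $1$.
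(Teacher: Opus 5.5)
Your proof is correct, and it takes a genuinely different route from the paper. The paper works with $E$ directly. It expands each Ramanujan sum as $c(j,d)=\sum_{r\mid\gcd(j,d)}r\,\mu(d/r)$ and notes that every $r\mid m$ is odd, so that $\tau_j\equiv\sum_{r\mid\gcd(j,m)}\alpha_r\pmod 2$ with $\alpha_r=\sum_{d\in D,\ r\mid d}\mu(d/r)$ and $D=\{m/e:e\in E\}$. It then shows by induction over the divisors that $\alpha_1$ is odd and $\alpha_r$ is even for $1<r<m$, and gets $\alpha_m$ odd from the parity of $\tau_0$. Finally it applies M\"obius inversion on the divisor lattice to obtain $\mathbf 1_D(r)\equiv\alpha_m\equiv 1\pmod 2$ for every $r>1$.

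You instead pass to the complement and use the spectrum of $K_m$ to turn the hypothesis into ``every eigenvalue of $\mathrm{ICG}_m(E')$ is even''. Your $\sigma_0$ step is the same parity-of-$\varphi$ fact the paper uses for $\tau_0$. You then eliminate $E'$ using the injectivity of the Fourier transform on $\mathbb Z_m$ modulo $2$ for odd $m$, which is the distinctness of the $m$-th roots of unity in characteristic $2$.

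Your route buys brevity and generality. It never uses the divisor-class structure of the connection set, and it actually shows that any subset of $\mathbb Z_m$, $m$ odd, whose circulant eigenvalues are all even must be empty. The paper's route buys self-containment: it stays within elementary arithmetic of Ramanujan sums and M\"obius functions, with no prime ideals of cyclotomic rings. You could also drop $\mathfrak P$ by inverting the transform directly: $m\,\mathbf 1_{S'}(k)=\sum_{j}\sigma_j\omega^{-jk}\in 2\mathbb Z[\omega]\cap\mathbb Z=2\mathbb Z$, and $m$ is odd.

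Your remark that the M\"obius route is messy because of the parities of $\varphi(d)/\varphi(t)$ does not apply to the paper's version. The form $\sum r\,\mu(d/r)$, with all $r$ odd, reduces cleanly modulo $2$.
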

\begin{proof}
	If \(m=1\), then \(E=\emptyset\) and \(X=K_1\), so the conclusion is trivial. Hence we may assume \(m>1\).
	Let $D:=\left\{\frac{m}{e}:e\in E\right\}\subseteq \{d:d\mid m,\ d>1\}$.
	Then $\tau_j=\sum\limits_{e\in E}c \left(j,\frac{m}{e}\right)=\sum\limits_{d\in D} c(j,d)$, $0\le j\le m-1$. 
	By Lemma \ref{lem: c(j,d)},	we obtain
	\[
	\tau_j=\sum_{d\in D} c(j,d)
	=\sum_{d\in D}\sum_{r\mid \gcd(j,d)} r\,\mu \left(\frac{d}{r}\right)
	=\sum_{r\mid \gcd(j,m)} r \sum_{\substack{d\in D,~r\mid d}}
	\mu \left(\frac{d}{r}\right).
	\]
	For each divisor \(r\) of \(m\), write $\alpha_r:=\sum\limits_{\substack{d\in D,~r\mid d}}
	\mu \left(\frac{d}{r}\right)$.
	Then $\tau_j=\sum\limits_{r\mid \gcd(j,m)} r\alpha_r$.
	Since \(m\) is odd, all its divisors are odd, so \(r\equiv 1 \pmod 2\). 
	Therefore
	\begin{equation}\label{equa: tau j}
		\tau_j=\sum_{r\mid \gcd(j,m)} r\alpha_r
		\equiv \sum_{r\mid \gcd(j,m)} \alpha_r \pmod 2.
	\end{equation}
	
	Taking $j=1$ in \eqref{equa: tau j}, we have $\tau_1\equiv \alpha_1\pmod 2$.
	Since $\tau_1$ is odd by assumption, $\alpha_1$ is odd. 
	We next show that $\alpha_r$ is even for every divisor \( r\) of \(m\) with \(1<r<m\), by induction on \( r\), ordered increasingly.
	Suppose $p\neq 1$ is the minimal proper divisor of $m$.
	Taking $j=p$ in \eqref{equa: tau j}, we have  $\tau_p\equiv\sum\limits_{r\mid p}\alpha_r\pmod 2\equiv \alpha_1+\alpha_p\pmod 2$.
	Since \(\alpha_1\) is odd and $\tau_p$ is odd, we have $\alpha_p$ is even.
	Now let \(t>1\) be a proper divisor of \(m\), and assume inductively that $\alpha_d$ is even for every proper divisor \(d\) of \(m\) with \(1<d<t\). 
	Taking \(j=t\) in \eqref{equa: tau j}, we obtain
	\[\tau_t\equiv \sum_{r\mid t}\alpha_r\pmod 2\equiv \alpha_1+\alpha_t + \sum_{\substack{r \mid t,~1<r < t}} \alpha_r\pmod 2.\]
	By the induction hypothesis, $\alpha_r$ is even for $r\mid t$ and $1<r<t$.
	Since \(\alpha_1\equiv 1\pmod 2\) and $ \tau_t$ is odd, it follows that $1\equiv \tau_t\equiv 1+\alpha_t \pmod 2$.
	Hence $\alpha_r$ is even for every divisor \( r\) of \(m\) with \(1<r<m\).
	Moreover, the integer \(\frac me>1\) is odd. 
	Hence each \(\varphi(\frac me)\) is even, and so \(\tau_0\) is even.
	Now we consider $\alpha_m$.
	Taking \(j=0\) in \eqref{equa: tau j} gives $0\equiv \tau_0\equiv \sum\limits_{r\mid m}\alpha_r \pmod 2$.
	By $\alpha_1$ is odd and $\alpha_r$ is even for $r\mid t$ and $1<r<t$, we obtain $0\equiv 1+\alpha_m \pmod 2$.
	Therefore $\alpha_m\equiv 1\pmod 2$.
	
Now, for each divisor \(r\) of \(m\), let \(\mathbf{1}_D(r)\) denote the indicator function of \(D\), namely, \(\mathbf{1}_D(r)=1\) if \(r\in D\), and \(\mathbf{1}_D(r)=0\) otherwise. 
	Then, for each divisor \(r\) of \(m\),
	\[
	\alpha_r
	=\sum_{\substack{d\in D,~r\mid d}}\mu \left(\frac{d}{r}\right)
	=\sum_{\substack{t\mid m,~r\mid t}}\mathbf{1}_{D}(t)\,\mu \left(\frac{t}{r}\right).
	\]
	Next we use M\"obius inversion on the divisor lattice of \(m\), that is, if $g(r)=\sum\limits_{\substack{t\mid m,~r\mid t}} f(t)\,\mu \left(\frac{t}{r}\right)$ for every $r\mid m$, then $f(r)=\sum\limits_{\substack{t\mid m,~r\mid t}} g(t)$.
	Applying this with \(f=\mathbf{1}_D\) and \(g=\alpha\), we obtain
	\[
	\mathbf{1}_{D}(r)=\sum_{\substack{t\mid m,~r\mid t}}\alpha_t.
	\]
	Now every divisor \(t\) of \(m\) satisfying \(r\mid t\) also satisfies \(t\ge r>1\). Hence every such \(t\neq m\) is a proper divisor of \(m\) greater than \(1\), so \(\alpha_t\) is even. 
	Therefore $\mathbf{1}_{D}(r)\equiv \alpha_m\equiv 1\pmod 2$.
	Since \(\mathbf{1}_{D}(r)\in\{0,1\}\), it follows that \(\mathbf{1}_{D}(r)=1\) for every divisor \(r\) of \(m\) with \(r>1\).
	Thus $D=\{d:d\mid m,\ d>1\}$.
	Equivalently, $E=\left\{\frac{m}{d}:d\mid m,\ d>1\right\}
	=\{e:e\mid m,\ e<m\}$.
	Hence $X=\mathrm{ICG}_m(E)=K_m$.
	This completes the proof.
\end{proof}

\section{Revisiting PST on Cayley graphs over dihedral groups}\label{sec:Perfect state transfer of Cayley Graphs on dihedral groups}

We now recall the eigenvalues criterion for perfect state transfer on Cayley graphs over dihedral
groups. 
The following lemma is a reformulation of \cite[Theorem~3.2]{Perfect state transfer on Cayley graphs over dihedral group}, see also \cite[Example~6.1]{Perfect state transfer on quasi-abelian semi-Cayley graphs}.
Throughout this section, we keep the notation introduced in Corollary \ref{core:eigenvalues of D2n}.

\begin{lem}\label{lem:iff of PST}\cite{Perfect state transfer on Cayley graphs over dihedral group,Perfect state transfer on quasi-abelian semi-Cayley graphs}
	Let $n\ge  3$ and $G\cong D_{2n}$, $G=\left\langle a,b\mid a^n=b^2=1,b^{-1}ab=a^{-1}\right\rangle$, $S\subseteq G$ be conjugation-closed and $\Gamma=\mathrm{Cay}(G,S)$.
	Then $\Gamma$ has perfect state transfer between distinct vertices $g$ and $g'$ at time $t$ if and only if $n$ is even and $\Gamma$ is integral and there exists an integer $\gamma$ such that one of the following holds.
	\begin{itemize}
		\item[$(1)$] If $n\equiv 2 \pmod 4$, then
		\begin{align*}
			v_2(\lambda_1-\lambda_3)&=v_2(\lambda_1-\lambda_4)=\gamma,\quad
			v_2(\lambda_1-\lambda_2)>\gamma,\\
			v_2(\eta_1-\lambda_1)&=\gamma,\quad
			v_2(\eta_{j+1}-\eta_j)=\gamma,\quad 1\le  j\le  \frac n2-2.	
		\end{align*}		
		\item[$(2)$] If $n\equiv 0 \pmod 4$, then
		\begin{align*}
			&v_2(\lambda_1-\lambda_2)>\gamma,\quad
			v_2(\lambda_1-\lambda_3)>\gamma,\quad
			v_2(\lambda_1-\lambda_4)>\gamma,\\
			&v_2(\eta_1-\lambda_1)=\gamma,\quad
			v_2(\eta_{j+1}-\eta_j)=\gamma,\quad 1\le  j\le  \frac n2-2.
		\end{align*}
	\end{itemize}
Moreover, if $\Gamma$ has perfect state transfer between distinct vertices $g$ and $g'$, then $g'g^{-1}=a^{\frac{n}{2}}$, and the minimum time at which perfect state transfer occurs is $\frac{\pi}{M}$ where $M=\gcd(\lambda_1-\lambda:\lambda\neq \lambda_1)$.
\end{lem}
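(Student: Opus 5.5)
The plan is to compute the transfer matrix explicitly through the central idempotents of the group algebra. Since $S$ is conjugation-closed, the adjacency matrix $A$ lies in the centre of the (right regular) group algebra of $G$. Writing $e_k=\frac{d_k}{2n}\sum_{x\in G}\overline{\chi_k(x)}\,x$ for the central primitive idempotents, we have $A=\sum_k \lambda_k e_k$ by Lemma~\ref{lem:eigenvalues of quasi-abelian graph}. Hence
\[
H(t)_{g,g'}=\frac{1}{2n}\sum_{k}d_k\,\overline{\chi_k(g'g^{-1})}\,e^{-\mathrm{i}t\lambda_k}.
\]
Since $\sum_k d_k^2=2n$ and $|\chi_k(x)|\le d_k$, the triangle inequality shows that $|H(t)_{g,g'}|=1$ forces two things:
\begin{itemize}
\item $|\chi_k(x)|=d_k$ for every $k$, where $x=g'g^{-1}$;
\item all the unimodular numbers $\overline{\chi_k(x)}d_k^{-1}e^{-\mathrm{i}t\lambda_k}$ coincide.
\end{itemize}
The first condition says that $x$ lies in the centre of $G$. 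Because $x\ne 1$, this forces $n$ even and $x=a^{n/2}$, which gives the statement about the transfer pair.

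Next I would read off the signs from the character table in Lemma~\ref{lem:representations and character table of D2n}. At $x=a^{n/2}$ we have $\chi_1(x)=\chi_2(x)=1$ and $\chi_3(x)=\chi_4(x)=(-1)^{n/2}$. For the character of degree $2$ indexed by $h$ we have $\chi_{h+4}(x)/2=(-1)^h$. Normalising by the first term, PST at time $t$ becomes equivalent to the following conditions:
\[
t(\lambda_1-\lambda_2)\in 2\pi\mathbb Z,\qquad t(\lambda_1-\lambda_{3}),\ t(\lambda_1-\lambda_4)\in \tfrac{n}{2}\pi+2\pi\mathbb Z,\qquad t(\lambda_1-\eta_h)\in h\pi+2\pi\mathbb Z .
\]

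From these I would derive integrality. All ratios of eigenvalue differences are rational, and the eigenvalues are algebraic integers that are sums of roots of unity. The standard ratio-condition argument then applies: apply Galois automorphisms, which permute the $\eta_h$, and use that $\lambda_1$ is an integer. This shows every eigenvalue is an integer. Integrality is then characterised by Lemma~\ref{lem:D2n integral-0}. With integer eigenvalues, write $t=\pi/M'$ in lowest form against the differences.

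The conditions now say the following. The number $(\lambda_1-\eta_1)t/\pi$ is an odd integer. Each consecutive difference $(\eta_{j+1}-\eta_j)t/\pi$ is odd. The difference $(\lambda_1-\lambda_2)t/\pi$ is even. The differences $(\lambda_1-\lambda_{3,4})t/\pi$ are congruent to $n/2$ modulo $2$.

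Setting $\gamma=v_2(\lambda_1-\eta_1)$, the oddness conditions translate into the stated valuation conditions. The odd ones become equalities of $v_2$ with $\gamma$, and the even ones become strict inequalities $>\gamma$. The case $n\equiv 2 \pmod 4$, where $n/2$ is odd, puts $\lambda_3,\lambda_4$ among the equalities. The case $4\mid n$ puts them among the strict inequalities.

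Conversely, given such a $\gamma$, I would take $t=\pi/2^{\gamma}\cdot(\text{odd})$, in fact $t=\pi/M$, and check that all the congruences hold. For the minimal time, the set of admissible $t$ is exactly the set of odd multiples of $\pi/M$ with $M=\gcd(\lambda_1-\lambda)$. I expect the main obstacle to be two steps: the integrality deduction from the phase conditions, and checking that the gcd-based time realises every required parity simultaneously. For the latter I would use that $v_2(M)=\gamma$, which follows from the valuation conditions.
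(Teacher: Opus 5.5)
You follow the standard character-theoretic route of the works this lemma is quoted from; the paper itself gives no proof. That route uses the formula $H(t)_{g,g'}=\frac{1}{2n}\sum_k d_k\chi_k(g'g^{-1})e^{-\mathrm{i}t\lambda_k}$, the triangle inequality forcing $g'g^{-1}\in Z(G)=\{1,a^{n/2}\}$, and the resulting phase congruences. Several of your steps are correct:
\begin{itemize}
\item the translation of the parity conditions into the $2$-adic conditions, via $t/\pi\in\mathbb{Q}$ with $v_2(t/\pi)=-\gamma$;
\item the converse at $t=\pi/M$, using $v_2(M)=\gamma$;
\item the description of all PST times as odd multiples of $\pi/M$, by B\'ezout.
\end{itemize}

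\textbf{The integrality step does not work as written.} Knowing that Galois automorphisms permute the $\eta_h$ and that $\lambda_1$ is an integer is not enough. The ratio condition together with one rational eigenvalue is compatible with irrational eigenvalues: the path $P_3$ has perfect state transfer with spectrum $\{\sqrt2,0,-\sqrt2\}$. The missing ingredient is that $\lambda_1=|S|$ is the \emph{largest} eigenvalue of the $|S|$-regular graph $\Gamma$.

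\textbf{How to close it.} Put $c=\pi/t$. Since $t(\lambda_1-\eta_1)$ is an odd multiple of $\pi$, you get $c\neq 0$ and $\lambda_1-\eta_h=m_hc$ with $m_h\in\mathbb{Z}$ and $m_h\equiv h\pmod 2$. Note that $c=(\lambda_1-\eta_1)/m_1\in\mathbb{Q}(\omega)$.

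\emph{Step 1: $c^2\in\mathbb{Q}$.} For $\sigma_\ell$ in the Galois group of $\mathbb{Q}(\omega)/\mathbb{Q}$ with $\ell$ coprime to the even integer $n$, you have $\sigma_\ell(\eta_1)=\eta_{h'}$ with $h'$ odd. Hence $\sigma_\ell(c)=(m_{h'}/m_1)\,c$, where $m_{h'}/m_1$ is a nonzero rational. Since $\sigma_\ell$ has finite order, this ratio must be $\pm1$, so $c^2\in\mathbb{Q}$.

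\emph{Step 2: $c\in\mathbb{Q}$.} Suppose $c\notin\mathbb{Q}$, and choose $\sigma$ with $\sigma(c)=-c$. Then $\sigma(\eta_h)=\lambda_1+m_hc$ is again some $\eta_{h''}$. Both $\lambda_1\pm m_hc$ are eigenvalues, so both are at most the spectral radius $\lambda_1$. This forces $m_hc=0$ for every $h$, contradicting $m_1$ odd. Hence $c\in\mathbb{Q}$, and every eigenvalue is a rational algebraic integer, hence an integer.

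When $n\equiv 2\pmod 4$ there is a shortcut: $\lambda_1-\lambda_3\in\mathbb{Z}$ is an odd, hence nonzero, multiple of $c$, so $c$ is rational directly.

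With this step inserted, your argument proves the lemma in full, including the transfer pair and the minimum time.
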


\begin{core}\label{core:parity}
	If $\Gamma$ admits perfect state transfer, then $\eta_1,\eta_2,\dots,\eta_{\frac n2 -1}$ are either all of the same parity or
	alternating in parity.
\end{core}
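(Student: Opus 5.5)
The plan is to deduce the corollary directly from Lemma~\ref{lem:iff of PST}. Assume $\Gamma$ admits perfect state transfer. Then $n$ is even, $\Gamma$ is integral, and there is an integer $\gamma$ with
\[
v_2(\eta_{j+1}-\eta_j)=\gamma,\quad 1\le j\le \frac n2-2 .
\]
This holds in both cases (1) and (2) of the lemma. All the $\eta_h$ are integers by Corollary~\ref{core:eigenvalues of D2n}, so every difference $\eta_{j+1}-\eta_j$ is an integer. If $\frac n2-1\le 1$, the list contains at most one eigenvalue and there is nothing to prove, so we may assume $\frac n2-2\ge 1$.

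We then split on $\gamma$, which is a single integer shared by all consecutive differences. First, $\gamma=\infty$ cannot occur. In that case every difference would be $0$, so all $\eta_j$ would be equal. But we also need $v_2(\eta_1-\lambda_1)=\gamma=\infty$, which gives $\eta_1=\lambda_1$. Then all eigenvalues $\eta_h$ would equal $\lambda_1$, and that conflicts with the other valuation conditions. In any case, if every difference is $0$, all $\eta_j$ have the same parity and the conclusion holds regardless. So we may assume $\gamma$ is a finite integer, and $\gamma\ge 0$ since the differences are nonzero integers.

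If $\gamma\ge 1$, every difference $\eta_{j+1}-\eta_j$ is even. Hence $\eta_1,\dots,\eta_{\frac n2-1}$ all have the same parity.

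If $\gamma=0$, every difference is odd. Hence consecutive terms have opposite parity, so the sequence alternates in parity.

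These cases are exhaustive, which proves the corollary. The only point needing care is that $\gamma$ is the same for all $j$. This is exactly what rules out mixed behaviour, such as some consecutive differences being even and others odd. There is no real obstacle: the statement is an immediate parity reading of the uniform-valuation condition in Lemma~\ref{lem:iff of PST}.
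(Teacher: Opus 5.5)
Your proposal is correct and matches the paper's proof: both read off the parity pattern directly from the uniform condition $v_2(\eta_{j+1}-\eta_j)=\gamma$ in Lemma~\ref{lem:iff of PST}, with $\gamma=0$ giving alternating parity and $\gamma\ge 1$ giving constant parity. Your extra discussion of $\gamma=\infty$ is unnecessary, since the lemma already supplies a finite integer $\gamma$, but it does no harm.
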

\begin{proof}
	By Lemma \ref{lem:iff of PST}, there exists an integer \(\gamma\) such that $v_2(\eta_{j+1}-\eta_j)=\gamma$, $1\le j\le \frac n2-2$.
	If \(\gamma=0\), then each difference \(\eta_{j+1}-\eta_j\) is odd, so \(\eta_j\) and \(\eta_{j+1}\) have opposite parity for every \(j\). 
	Hence the sequence \(\eta_1,\eta_2,\ldots,\eta_{\frac n2-1}\) alternates in parity. 
	If \(\gamma\ge 1\), then each difference \(\eta_{j+1}-\eta_j\) is even, and therefore \(\eta_j\equiv \eta_{j+1}\pmod 2\) for every \(j\).
	Hence all terms \(\eta_j\) have the same parity.
\end{proof}

\section{Characterization of the connection sets}\label{sec:Characterization}

Throughout this section, we retain the notation introduced above, in particular that of Corollary~\ref{core:eigenvalues of D2n}. 

Suppose that \(\Gamma\) admits perfect state transfer. Then, by Lemma~\ref{lem:iff of PST}, \(\Gamma\) is integral. Hence, by Corollary~\ref{core:eigenvalues of D2n}, there exists a divisor set $D\subseteq \{d:d\mid n,\ d<n\}$ such that
\[
S_0=\bigcup_{d\in D}G_n(d).
\]
Since the conjugation-closed connection sets \(S\) have the explicit form given in \eqref{equ:S0S1}, the problem reduces to determining the admissible divisor sets \(D\) and the possible values of \(\alpha,\beta\). 
The main idea is to combine the Ramanujan-sum expressions for the eigenvalues in Corollary~\ref{core:eigenvalues of D2n} with the \(2\)-adic conditions in Lemma~\ref{lem:iff of PST}. 
This converts the eigenvalues conditions for perfect state transfer into explicit structural conditions on \(D,\alpha,\beta\).

For \(0\le i\le v_2(n)\), define
\[
D_i:=\{d\in D: v_2\left(\frac nd \right)=i\},
\]
and write
\[
D_{\le i}:=\bigcup_{j\le i}D_j,\quad D_{\ge i}:=\bigcup_{j\ge i}D_j.
\]
With this notation fixed, we treat separately the cases \(n\equiv0\pmod4\) and \(n\equiv2\pmod4\).

We first record a parity criterion for the odd-position eigenvalues of an integral circulant graph.
Although \cite[Lemmas~12 and~13]{basic} are stated for connected integral circulant graphs, their proofs do not use connectedness. 
Hence the same conclusions hold for arbitrary integral circulant graphs \(\mathrm{ICG}_n(E)\).

\begin{lem}\cite[Lemmas~12 and~13]{basic}\label{lem:basic lem12+13}
	Let \(E\subseteq\{d:d\mid n,\ 1\le d<n\}\), and let
	\(Z=\mathrm{ICG}_n(E)\). For \(i\ge 0\), define $E_i=\{d\in E:v_2(\frac nd)=i\}$, $E_{\le 1}:=E_0\cup E_1$, $E_1^*:=E_1\setminus\{\frac n2\}$.
	Let $\vartheta_j=\sum\limits_{d\in E}c(j,\frac nd)$, $0\le j\le n-1$, be the eigenvalues of \(Z\). 
	Then the following hold.
	\begin{enumerate}
		\item[$(1)$] All eigenvalues \(\vartheta_j\) with \(j\) odd are even if and only if $E_{\le 1}=E_1\cup 2E_1$.		
	    \item[$(2)$] All eigenvalues \(\vartheta_j\) with \(j\) odd are odd if and only if $E_{\le 1}=E_1^*\cup 2E_1^*\cup\left\{\frac n2\right\}$.
	\end{enumerate}
\end{lem}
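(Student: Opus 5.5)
The plan is to reduce the parity of \(\vartheta_j\) for odd \(j\) to a linear system over \(\mathbb F_2\) indexed by the odd divisors of \(n\), and to show that this system has an invertible matrix.

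\textbf{Step 1: only \(E_{\le 1}\) matters.} Write \(n=2^s m\) with \(m\) odd, and fix an odd \(j\). For \(d\in E\) write \(\frac nd=2^i q\) with \(q\) odd. By Lemma~\ref{lem:ramanujan-odd}-(4),
\[
c\left(j,\tfrac nd\right)=c(j,2^i)\,c(j,q).
\]
For odd \(j\) we have \(c(j,1)=1\), \(c(j,2)=-1\) and \(c(j,2^i)=0\) for \(i\ge 2\), by Lemma~\ref{lem:ramanujan-odd}-(1). Hence
\[
\vartheta_j\equiv \sum_{d\in E_0} c\left(j,\tfrac nd\right)+\sum_{d\in E_1} c\left(j,\tfrac{n}{2d}\right)\pmod 2 .
\]
So only \(E_{\le 1}\) matters. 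Let \(Q_0=\{\frac nd:d\in E_0\}\) and \(Q_1=\{\frac n{2d}:d\in E_1\}\); both are sets of divisors of \(m\). For each \(q\mid m\) put \(x_q\equiv \mathbf 1_{Q_0}(q)+\mathbf 1_{Q_1}(q)\pmod 2\). Then
\[
\vartheta_j\equiv\sum_{q\mid m}x_q\,c(j,q)\pmod 2 .
\]

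\textbf{Step 2: remove the restriction to odd \(j\).} The value \(c(j,q)\) depends only on \(\gcd(j,q)\), and as \(j\) runs over the odd integers, \(\gcd(j,m)\) takes every value \(g\mid m\) (by the Chinese remainder theorem, since \(m\) is odd). So the condition on all odd \(j\) becomes a condition on all \(g\mid m\). By Lemma~\ref{lem: c(j,d)},
\[
c(g,q)\equiv \sum_{r\mid \gcd(g,q)}\mu\left(\tfrac qr\right)\pmod 2 .
\]
Thus the matrix \(M=(c(g,q))_{g,q\mid m}\) factors as \(Z\cdot N\), where \(Z_{g,r}=[r\mid g]\) and \(N_{r,q}=[r\mid q]\,\mu(q/r)\). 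Both factors are unitriangular with respect to the divisibility order, so \(M\) is invertible over \(\mathbb F_2\).

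\textbf{Step 3: solve the system.} For (1), all \(\vartheta_j\) with \(j\) odd are even if and only if \(Mx=0\), which by invertibility means \(x=0\), that is \(Q_0=Q_1\). Translating back, \(d\in E_0\) exactly when \(\frac d2\in E_1\); here \(q=1\in Q_0\) would force \(d=n\), which is excluded. This says precisely \(E_0=2E_1\), i.e. \(E_{\le 1}=E_1\cup 2E_1\).

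For (2), the requirement is \(Mx=\mathbf 1\). The vector \(x=e_1\) (supported only at \(q=1\)) is a solution because \(c(g,1)=1\) for all \(g\), and by invertibility it is the unique one. Now \(x_1=1\) forces \(1\in Q_1\), i.e. \(\frac n2\in E_1\), since \(1\notin Q_0\). For \(q>1\), \(x_q=0\) forces \(Q_0\setminus\{1\}=Q_1\setminus\{1\}\), i.e. \(E_0=2E_1^*\). Together these give \(E_{\le 1}=E_1^*\cup 2E_1^*\cup\{\frac n2\}\).

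The main point to check carefully is the invertibility of \(M\) over \(\mathbb F_2\), in particular that the two divisor-lattice factors really compose to \(M\) modulo \(2\). The remaining steps are bookkeeping of the correspondence \(d\leftrightarrow q\) together with the boundary cases \(q=1\) and \(d=\frac n2\).
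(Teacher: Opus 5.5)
Your proof is correct, and it differs from what the paper does: the paper gives no argument for this lemma. It imports it from Ba\v{s}i\'c's Lemmas~12 and~13, remarking only that connectedness is not used there.

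Your route is self-contained and uses only tools already in the paper. You reduce modulo \(2\) to \(E_{\le 1}\) and collapse each odd index \(j\) to \(g=\gcd(j,m)\). The identity \(c(g,q)=\sum_{r\mid\gcd(g,q)}r\,\mu(q/r)\) then gives an exact factorization \(M=Z\,\mathrm{diag}(r)_{r\mid m}\,N\), so \(\det M=\prod_{r\mid m}r\) is odd. This is the same mechanism the paper uses in Lemma~\ref{lem:auxiliary-Km}: Ramanujan sums written as M\"obius sums, odd divisors reduced to \(1\) modulo \(2\), and inversion on the divisor lattice. The paper runs it as a divisor-by-divisor parity induction followed by M\"obius inversion.

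Packaging it as invertibility of \(M\) over \(\mathbb F_2\) has two advantages. It treats (1) and (2) uniformly, as \(Mx=0\) and \(Mx=\mathbf 1\) with unique solutions \(x=0\) and \(x=e_1\). It also makes the independence from connectedness evident rather than merely asserted. The same system even gives Lemma~\ref{lem:auxiliary-Km} in one line: there \(\tau_0\) is even, so \(Mx\equiv\mathbf 1-e_m\), and \(x=\mathbf 1-e_1\) solves it because \(\sum_{q\mid m}c(g,q)\) equals \(m\) if \(m\mid g\) and \(0\) otherwise. The citation is, of course, shorter.

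One point you should state explicitly: you need \(n\) to be even, which the statement presupposes through \(\frac n2\). Evenness is used in two places. First, it lets you realize every \(g\mid m\) by an admissible odd index; just take \(j=g\le m<n\), so the Chinese remainder theorem is not needed. Second, it lets you identify \(1\in Q_1\) with \(\frac n2\in E_1\). For odd \(n\), the value \(g=n\) is never \(\gcd(j,n)\) for an odd \(j\le n-1\), and (2) is false there, e.g.\ for \(n=3\), \(E=\{1\}\).
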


\subsection{The case \(n\equiv 0\pmod 4\)}
If \(d\in D_{\ge 2}\), then \(v_2( \frac nd)\ge 2\).
By Ramanujan sum expression, we have \(c(h, \frac nd)=0\) for odd \(h\), and hence \(\eta_h\), for odd $h$, depends only on \(D_{\le1}\), and may therefore be analyzed through \(\mathrm{ICG}_n(D_{\le1})\).	

\begin{lem}\label{thm:-1 0}
	Let \(n\equiv 0\pmod 4\), \(G\cong D_{2n}\), and let
	\(\Gamma=\mathrm{Cay}(G,S)\). Suppose that \(\Gamma\) admits perfect state transfer.
	Then, for every odd integer \(h\) with \(1\le h\le \frac n2-1\), \(\eta_h=-1\) if \(\frac n2\in D_1\), and \(\eta_h=0\) if \(\frac n2\notin D_1\).
	Consequently, $D_0=2(D_1\setminus\{\frac n2\})$.
\end{lem}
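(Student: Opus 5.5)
The plan is to pass from $\Gamma$ to the circulant $X=\mathrm{Cay}(\langle a\rangle,S_0)=\mathrm{ICG}_n(D)$ and work only with its odd-indexed eigenvalues $\vartheta_h=\sum_{d\in D}c(h,\frac nd)$, $h$ odd. As noted just before the statement, for odd $h$ only $D_{\le 1}$ contributes. For $d\in D_0$ the modulus $\frac nd$ is odd. For $d\in D_1$, Lemma \ref{lem:ramanujan-odd}-(1) gives $c(h,\frac nd)=-c(h,\frac n{2d})$ with $\frac n{2d}$ odd. Hence, for odd $h$,
\[
\eta_h=\sum_{d\in D_0}c\Big(h,\frac nd\Big)-\sum_{d\in D_1}c\Big(h,\frac n{2d}\Big),
\]
and this expression depends only on $h$ modulo the odd part $m$ of $n$. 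In particular, $\frac n2\in D_1$ contributes the constant $-c(h,1)=-1$.

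The first and main step is to show that $\eta_h$ is constant on odd $h$ under the hypotheses of Lemma \ref{lem:iff of PST}(2). The conditions give $v_2(\eta_{j+1}-\eta_j)=\gamma$ for all $j$, so every odd-index difference satisfies $v_2(\eta_{h+2}-\eta_h)\ge\gamma+1$. By Corollary \ref{core:parity}, the odd-indexed values all have the same parity.

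I would first try to bootstrap this into exact constancy as follows. Feed the same-parity information into Lemma \ref{lem:basic lem12+13}, which pins $D_{\le1}$ to one of the two shapes $E_1\cup2E_1$ or $E_1^*\cup2E_1^*\cup\{\frac n2\}$. Then check directly that for these shapes the two sums in the display above cancel for every odd $h$, up to the constant $-1$ coming from $\frac n2$. The cancellation is term by term: a divisor $e\in D_1\setminus\{\frac n2\}$ and its partner $2e\in D_0$ satisfy $\frac n{2e}=\frac n{2e}$, so $c(h,\frac n{2e})-c(h,\frac n{2e})=0$.

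For this I must justify that the parity alone forces the required shape. If Lemma \ref{lem:basic lem12+13} only yields information up to parity, I would replace it by a 2-adic version of the Möbius argument of Lemma \ref{lem:auxiliary-Km}. Write the odd-$h$ function as $\sum_{r\mid\gcd(h,m)} r\,\beta_r$ using Lemma \ref{lem: c(j,d)}, with signed coefficients $\beta_r$ coming from $D_0$ and $D_1$. Then show by induction on $r$ that $\beta_r=0$ for $r>1$. This last induction is where I expect the real difficulty. The worry is that the odd-index differences are controlled only at valuation $\ge\gamma+1$, not exactly, so I may also need $v_2(\lambda_1-\lambda_3)>\gamma$ and $v_2(\eta_1-\lambda_1)=\gamma$ to fix $\gamma$ relative to the divisors $r$ involved.

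Once constancy $\eta_h\equiv c$ for all odd $h$ is known, the value of $c$ follows from orthogonality. Note that $\vartheta_h=\vartheta_{n-h}$ and the odd $h$ in $[1,n-1]$ are exactly the $\frac n2$ indices $h$ and $n-h$ with $h\le\frac n2-1$ odd, so every odd-indexed $\vartheta_h$ equals $c$. Now
\[
\sum_{\substack{0\le h<n\\ h\text{ odd}}}\omega^{hk}=\begin{cases}\frac n2,&k\equiv0,\\ -\frac n2,&k\equiv\frac n2,\\ 0,&\text{otherwise},\end{cases}
\]
so summing $\vartheta_h$ over odd $h$ gives $\frac n2\,c=-\frac n2\,[a^{n/2}\in S_0]$. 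Hence $c=-1$ if $\frac n2\in D_1$, and $c=0$ otherwise.

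For the consequence $D_0=2(D_1\setminus\{\frac n2\})$, subtract the constant contribution of $\frac n2$. This leaves
\[
\sum_{d\in D_0}c\Big(h,\frac nd\Big)=\sum_{e\in D_1\setminus\{n/2\}}c\Big(h,\frac n{2e}\Big)\quad\text{for all odd }h.
\]
Both sides are functions of $h$ modulo $m$ that are fixed under multiplication by units. Replacing $h$ by an odd representative of any residue class modulo $m$, the identity holds for all $j$. Applying Lemma \ref{lem: c(j,d)} and Möbius inversion on the divisor lattice of $m$, exactly as in Lemma \ref{lem:auxiliary-Km}, shows that the multisets of moduli agree: $\{\frac nd:d\in D_0\}=\{\frac n{2e}:e\in D_1\setminus\{\frac n2\}\}$. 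Equivalently, $D_0=2(D_1\setminus\{\frac n2\})$.
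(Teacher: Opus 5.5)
Your proposal is correct, and its core matches the paper's proof. Corollary~\ref{core:parity} gives a common parity for the odd-indexed $\eta_h$, hence for all odd-position eigenvalues via $\vartheta_{n-h}=\vartheta_h$ and $2\mid\frac n2$. Lemma~\ref{lem:basic lem12+13} then forces $D_{\le1}$ into one of its two exact shapes, and the pairing $c(h,\frac ne)=-c(h,\frac n{2e})$ gives both the values of $\eta_h$ and $D_0=2(D_1\setminus\{\frac n2\})$. Your hedged 2-adic M\"obius fallback is unnecessary, because Lemma~\ref{lem:basic lem12+13} is an exact ``if and only if'' from parity alone. Your orthogonality computation and final M\"obius-inversion step are valid but redundant once the shape is known, though the identity $\frac n2\,c=-\frac n2\,[a^{n/2}\in S_0]$ neatly replaces the paper's case-by-case contradictions (including the auxiliary graph $Y$) that match each shape to whether $\frac n2\in D_1$.
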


\begin{proof}
	Let $X:=\mathrm{ICG}_n(D_{\le 1})$, and denote its eigenvalues by $\theta_j=\sum\limits_{d\in D_{\le 1}}c(j,\frac nd)$, $0\le j\le n-1$.
	
	We first compare \(\eta_h\) with the odd-position eigenvalues of \(X\).
	Let \(h\) be odd. 
	If \(d\in D_{\ge 2}\), then the Ramanujan-sum formula gives $c(h,\frac nd)=0$.
	Thus, for every odd \(h\) with \(1\le h\le \frac n2-1\), $\eta_h =\sum\limits_{d\in D}c(h,\frac nd)=\sum\limits_{d\in D_{\le 1}}c(h,\frac nd)=\theta_h$.
	Since \(\frac n2\) is even, every odd-position eigenvalue of \(X\) is one of the numbers \(\theta_h\) or \(\theta_{n-h}\), where \(h\) is odd and \(1\le h\le \frac n2-1\). 
	Since \(X\) is undirected,  $\theta_{n-h}=\theta_h=\eta_h$.
	Therefore the odd-position eigenvalues of \(X\) are precisely represented by \(\eta_h\) with \(h\) odd.
	
	By Corollary \ref{core:parity}, all \(\eta_h\) with odd \(h\) have the same parity. We now
	determine this parity according to whether \(\frac n2\) belongs to \(D_1\).
	
	If \(\frac n2\in D_1\), we claim that all \(\eta_h\) with \(h\) odd are odd. 
	Otherwise they are all even, and so all odd-position eigenvalues of \(X\) are even.
	By Lemma \ref{lem:basic lem12+13}-(1), we obtain $D_{\le 1}=D_1\cup 2D_1$.
	But \(\frac n2\in D_1\) implies $n=2\cdot \frac n2\in 2D_1\subseteq D_{\le 1}\subseteq D$, which is impossible, since \(D\subseteq\{d:d\mid n,\ d<n\}\). 
	Hence all \(\eta_h\) with \(h\) odd are odd.
	Since all odd-position eigenvalues of \(X\) are odd, Lemma \ref{lem:basic lem12+13}-(2) gives $D_{\le 1}=D_1^*\cup 2D_1^*\cup\{\frac n2\}$, $D_1^*:=D_1\setminus\{\frac n2\}$.
	Therefore, for odd \(h\),
	\[
	\eta_h=\sum_{d\in D_1^*}\left(c\left(h,\frac nd\right)+c\left(h,\frac n{2d}\right)\right) + c(h,2).
	\]
	For \(d\in D_1^*\), the integer \(\frac n{2d}\) is odd. 
	Since \(h\) is odd,
	Lemma \ref{lem:ramanujan-odd}-(1) gives $c(h,\frac nd)=-c(h,\frac n{2d})$.
	Thus all paired terms cancel.
	Moreover, \(c(h,2)=-1\). 
	Hence $\eta_h=-1$ for every odd \(h\).
	
	If \(\frac n2\notin D_1\), we claim that all \(\eta_h\) with \(h\) odd are even. 
	Otherwise they are all odd. 
	Put $Y:=\mathrm{ICG}_n(D_{\le 1}\cup\{\frac n2\})$,	and denote its eigenvalues by \(\widetilde{\theta}_j\). 
	For odd \(h\) with \(1\le h\le \frac n2-1\), we have $\widetilde{\theta}_h=\eta_h+c(h,2)=\eta_h-1$.
	Thus all these \(\widetilde{\theta}_h\)'s are even. Since \(\frac n2\) is even and \(Y\) is undirected, all odd-position eigenvalues of \(Y\) are even. 
	By Lemma \ref{lem:basic lem12+13}-(1), $D_{\le 1}\cup\{\frac n2\}=(D_1\cup\{\frac n2\})\cup	2(D_1\cup\{\frac n2\})$.
	The right-hand side contains \(n=2\cdot \frac n2\), whereas the left-hand side contains only proper divisors of \(n\). 
	This contradiction shows that all \(\eta_h\) with
	\(h\) odd are even.
	Then all odd-position eigenvalues of \(X\) are even. 
	By Lemma \ref{lem:basic lem12+13}-(1), $D_{\le 1}=D_1\cup 2D_1$.
	Hence, for odd \(h\),
	\[
	\eta_h
	=
	\sum_{d\in D_1}
	\left(
	c\left(h,\frac nd\right)
	+
	c\left(h,\frac n{2d}\right)
	\right).
	\]
	Again \(\frac n{2d}\) is odd and $c(h,\frac nd)=-c(h,\frac n{2d})$.
	Thus all paired terms cancel, and so $\eta_h=0$ for every odd \(h\).
	
	Finally, in the first case we have $D_{\le 1}=D_1^*\cup 2D_1^*\cup\{\frac n2\}$, $D_1^*=D_1\setminus\{\frac n2\}$, and in the second case we have $D_{\le 1}=D_1\cup 2D_1$.
	Since \(D_{\le 1}=D_0\cup D_1\), both cases give $D_0=2(D_1\setminus\{\frac n2\})$.
	This completes the proof.
\end{proof}

The values of the odd-indexed \(\eta_h\)'s obtained in Lemma \ref{lem:iff of PST} allow us to rewrite the
\(2\)-adic conditions in Lemma \ref{lem:iff of PST} in terms of the even-indexed eigenvalues.

\begin{lem}\label{lem:basic lem16}
	Let $n \equiv 0 \pmod{4}$, $G \cong D_{2n}$, and $\Gamma=\mathrm{Cay}(G,S)$.
	Suppose $\Gamma$ admits perfect state transfer.
	Then, in the notation of Lemma \ref{lem:iff of PST}, for $1\le  j\le  \frac n4-1$, we have $v_2(\eta_{2j})=\gamma$ if $\frac n2\notin D_1$,
	and $v_2(\eta_{2j}+1)=\gamma$ if $\frac n2\in D_1$.
\end{lem}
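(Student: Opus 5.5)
The plan is to combine the condition \(v_2(\eta_{j+1}-\eta_j)=\gamma\) from Lemma \ref{lem:iff of PST}(2) with the explicit odd-indexed values from Lemma \ref{thm:-1 0}. Set \(c:=-1\) if \(\frac n2\in D_1\) and \(c:=0\) otherwise. By Lemma \ref{thm:-1 0}, \(\eta_h=c\) for every odd \(h\) with \(1\le h\le \frac n2-1\). For \(1\le j\le \frac n4-1\), both \(2j-1\) and \(2j+1\) lie in the range \(1,\dots,\frac n2-1\), since \(2j+1\le \frac n2-1\); they are odd, and \(2j\) lies strictly between them. So every even index \(2j\) in this range is flanked by two odd indices.

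Apply Lemma \ref{lem:iff of PST}(2) with the consecutive pair \((2j-1,2j)\). The index \(2j-1\) lies in \(1\le 2j-1\le \frac n2-2\), so the hypothesis of the lemma applies and gives
\[
v_2(\eta_{2j}-\eta_{2j-1})=\gamma .
\]
Substituting \(\eta_{2j-1}=c\) yields \(v_2(\eta_{2j}-c)=\gamma\). When \(\frac n2\notin D_1\) this reads \(v_2(\eta_{2j})=\gamma\). When \(\frac n2\in D_1\) it reads \(v_2(\eta_{2j}+1)=\gamma\). This is exactly the statement. As a consistency check, the pair \((2j,2j+1)\) gives the same equation, since \(v_2(c-\eta_{2j})=v_2(\eta_{2j}-c)\).

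No real obstacle is expected. The only points to verify are the index bookkeeping and the fact that Lemma \ref{thm:-1 0} pins down \(\eta_h\) exactly, not merely its parity. The bookkeeping amounts to checking that \(2j-1\ge 1\) and \(2j\le \frac n2-2\), so that the pair is covered by the condition \(1\le j'\le \frac n2-2\) in Lemma \ref{lem:iff of PST}. If \(n=4\), the range of \(j\) is empty and the statement holds vacuously.
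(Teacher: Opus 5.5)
Your proposal is correct and matches the paper's proof. It substitutes the exact odd-indexed values \(\eta_{2j-1}\in\{0,-1\}\) from Lemma~\ref{thm:-1 0} into the condition \(v_2(\eta_{2j}-\eta_{2j-1})=\gamma\) of Lemma~\ref{lem:iff of PST}(2), and your index check (that \(1\le 2j-1\le \frac n2-2\)) is the right one.
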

\begin{proof}
	Since \(\Gamma\) admits perfect state transfer, Lemma  \ref{lem:iff of PST} yields $v_2(\eta_{j+1}-\eta_j)=\gamma$, $1\le j\le \frac n2-2$.
	If \(\frac n2\notin D_1\), then Lemma \ref{thm:-1 0} implies \(\eta_{2j-1}=0\), and hence $v_2(\eta_{2j})=v_2(\eta_{2j}-\eta_{2j-1})=\gamma$
	for every \(1\le j\le \frac n4-1\).
	If \(\frac n2\in D_1\), then Lemma \ref{thm:-1 0} implies \(\eta_{2j-1}=-1\), and hence $v_2(\eta_{2j}+1)=v_2(\eta_{2j}-\eta_{2j-1})=\gamma$ for every \(1\le j\le \frac n4-1\). 
	This completes the proof.
\end{proof}

We now turn to the even-indexed eigenvalues \(\eta_{2j}\). 
By applying Lemma \ref{lem:basic lem 17}, we rewrite them in terms of the eigenvalues of a suitable integral circulant graph of order \(\frac n2\). 
This gives the following relation between \(D_1\) and \(D_2\).

\begin{pro}\label{pro:D1-D2-8midn}
	Let \(8\mid n\), \(G\cong D_{2n}\), and let
	\(\Gamma=\mathrm{Cay}(G,S)\). If \(\Gamma\) admits perfect state transfer, then $D_1\setminus\{\frac n2\}=2(D_2\setminus\{\frac n4\})$.
\end{pro}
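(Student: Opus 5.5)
The plan is to express the even-indexed eigenvalues \(\eta_{2j}\) as twice an eigenvalue of an integral circulant graph of order \(m:=\frac n2\), and then apply Lemma~\ref{lem:basic lem12+13} to that graph. Since \(8\mid n\), we have \(4\mid m\), and this is what lets the odd-position argument used in Lemma~\ref{thm:-1 0} run again one level down.

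\textbf{Step 1: rewrite \(\eta_{2j}\).} For \(1\le j\le \frac n4-1\), split \(\eta_{2j}=\sum_{d\in D}c(2j,\frac nd)\) according to \(v_2(\frac nd)\) and apply Lemma~\ref{lem:basic lem 17}:
\begin{itemize}
\item for \(d\in D_0\), the term is \(c(j,\frac nd)\);
\item for \(d\in D_1\), it is \(c(j,\frac n{2d})\);
\item for \(d\in D_{\ge2}\), it is \(2c(j,\frac n{2d})\).
\end{itemize}
By Lemma~\ref{thm:-1 0}, \(D_0=2D_1^*\) with \(D_1^*:=D_1\setminus\{\frac n2\}\). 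Each \(d\in D_1^*\) pairs with \(2d\in D_0\), and \(\frac n{2d}=\frac{n}{2d}\) shows the two terms coincide, so their sum is \(2c(j,\frac n{2d})\). If \(\frac n2\in D_1\), that divisor contributes \(c(j,1)=1\). Put
\[
E:=D_1^*\cup D_{\ge 2},
\]
viewed as a set of proper divisors of \(m\); each \(d\in E\) divides \(m\) and \(\frac n{2d}=\frac md\). Let \(\vartheta_j=\sum_{e\in E}c(j,\frac me)\) be the eigenvalues of \(Z=\mathrm{ICG}_m(E)\). Then
\[
\eta_{2j}=2\vartheta_j \ \text{ if } \tfrac n2\notin D_1,\qquad \eta_{2j}+1=2(\vartheta_j+1)\ \text{ if } \tfrac n2\in D_1 .
\]
Combining this with Lemma~\ref{lem:basic lem16} gives \(v_2(\vartheta_j+\varepsilon)=\gamma-1\) for all \(1\le j\le \frac m2-1\), where \(\varepsilon\in\{0,1\}\) is fixed, equal to \(1\) exactly when \(\frac n2\in D_1\).

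\textbf{Step 2: a common parity on odd positions.} Since the valuation \(\gamma-1\) is the same for every \(j\), all \(\vartheta_j\) with \(j\) odd and \(1\le j\le\frac m2-1\) have the same parity. They are all odd if \(\gamma-1+\varepsilon\) has one parity, and all even otherwise. Because \(\frac m2\) is even and \(Z\) is undirected, \(\vartheta_{m-j}=\vartheta_j\), so these values exhaust all odd-position eigenvalues of \(Z\). Lemma~\ref{lem:basic lem12+13} therefore applies to \(Z\) with order \(m\), where
\[
E_0=D_1^* \quad\text{and}\quad E_1=D_2
\]
(since \(v_2(\frac md)=v_2(\frac nd)-1\)). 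Note that \(\frac m2=\frac n4\).
\begin{itemize}
\item \emph{Even case.} Lemma~\ref{lem:basic lem12+13}-(1) gives \(D_1^*\cup D_2=D_2\cup 2D_2\), hence \(D_1^*=2D_2\). Since \(\frac n2\notin D_1^*\), it follows that \(\frac n4\notin D_2\), so this says \(D_1^*=2(D_2\setminus\{\frac n4\})\).
\item \emph{Odd case.} Lemma~\ref{lem:basic lem12+13}-(2) gives \(D_1^*\cup D_2=D_2^*\cup 2D_2^*\cup\{\frac n4\}\) with \(D_2^*=D_2\setminus\{\frac n4\}\). Comparing \(2\)-adic levels yields \(D_1^*=2D_2^*\).
\end{itemize}
Both cases give the claimed identity.

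\textbf{Main obstacle.} The delicate part is the bookkeeping in Step 1. One has to check that the \(D_0\) and \(D_1^*\) terms really merge into a doubled Ramanujan sum of order dividing \(m\), and that the stray term from \(\frac n2\) is a constant independent of \(j\). One also needs the index range \(1\le j\le\frac n4-1\) to reach every odd position of \(Z\), which uses \(4\mid m\), i.e.\ \(8\mid n\). Everything else is a direct application of the two parity lemmas.
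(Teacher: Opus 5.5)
Your proposal is correct and follows essentially the same route as the paper. You write $\eta_{2j}=2\sigma_j+\varepsilon$ for an integral circulant graph of order $\frac n2$, use Lemma~\ref{lem:basic lem16} to force a common parity on its odd positions, and apply Lemma~\ref{lem:basic lem12+13}. The only differences are cosmetic: you keep $D_{\ge 3}$ in the auxiliary set, which is harmless since Lemma~\ref{lem:basic lem12+13} only sees the two lowest $2$-adic levels, and you read $\frac n4\in D_2$ directly off part~(2) rather than via the paper's separate contradiction. One small slip: your parenthetical rule that the parity is governed by $\gamma-1+\varepsilon$ is misstated, since the odd-position $\vartheta_j$ satisfy $\vartheta_j\equiv\varepsilon$ when $\gamma\ge 2$ and $\vartheta_j\equiv 1+\varepsilon$ when $\gamma=1$; this is harmless because only the common parity is used.
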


\begin{proof}
	Put \(n_1=\frac n2\). 
	Set $E=(D_1\setminus\{\frac n2\})\cup D_2$, and let $\sigma_j=\sum\limits_{d\in E}c(j,\frac{n_1}{d})$, $0\le j\le n_1-1$,	be the eigenvalues of \(\mathrm{ICG}_{n_1}(E)\). 
	Also put $\varepsilon=1$ if $\frac n2\in D_1$, and $\varepsilon=0$ if $\frac n2\notin D_1$.
	
	Let \(j\) be odd and \(1\le j\le \frac n4-1\). 
	We compute \(\eta_{2j}\) from the decomposition \(D=D_0\cup D_1\cup D_2\cup D_{\ge 3}\).
	By Lemma \ref{thm:-1 0}, $D_0=2(D_1\setminus\{\frac n2\})$.
	Thus for each \(e\in D_1\setminus\{\frac n2\}\), the two divisors \(e\in D_1\) and \(2e\in D_0\) occur together. 
	By Lemma \ref{lem:basic lem 17}, their total contribution to \(\eta_{2j}\) is $c(2j,\frac ne)+c(2j,\frac n{2e})=2c(j,\frac {n_1}e)$.
	Similarly, each \(d\in D_2\) contributes $c(2j,\frac nd)=	2c(j,\frac {n_1}d)$.
	The divisors in \(D_{\ge 3}\) contribute nothing, since \(j\) is odd and	\(v_2(\frac n{2d})\ge 2\) for \(d\in D_{\ge 3}\). 
	Finally, the possible divisor \(\frac n2\in D_1\) contributes \(c(2j,2)=1\). 
	Therefore
	\begin{align*}
		\eta_{2j}=2\sum_{e\in D_1\setminus\{\frac n2\}}
		c\left(j,\frac {n_1}e\right)
		+2\sum_{d\in D_2}c\left(j,\frac {n_1}d\right)
		+\varepsilon =2\sigma_j+\varepsilon .
	\end{align*}

	If \(\frac n2\notin D_1\), then \(\varepsilon=0\), and Lemma \ref{lem:basic lem16} gives $v_2(2\sigma_j)=v_2(\eta_{2j})=\gamma$.
	If \(\frac n2\in D_1\), then \(\varepsilon=1\), and Lemma \ref{lem:basic lem16} gives $v_2(2(\sigma_j+1))=v_2(\eta_{2j}+1)	=\gamma$.
	In both cases, the numbers \(\sigma_j\), with \(j\) odd and
	\(1\le j\le \frac n4-1\), have the same parity.
	
	Since \(8\mid n\), the integer \(\frac n4=\frac{n_1}{2}\) is even.
	Hence every	odd-position eigenvalue of \(\mathrm{ICG}_{n_1}(E)\) is one of the numbers \(\sigma_j\) or \(\sigma_{n_1-j}\), where \(j\) is odd and \(1\le j\le \frac n4-1\). 
	Since \(\mathrm{ICG}_{n_1}(E)\) is undirected, $\sigma_{n_1-j}=\sigma_j$.
	Thus all odd-position eigenvalues of \(\mathrm{ICG}_{n_1}(E)\) have the same parity.
	
	For the graph \(\mathrm{ICG}_{n_1}(E)\), a divisor \(d\in E\) satisfies $v_2(\frac{n_1}{d})=0$ if and only if $d\in D_1\setminus\{\frac n2\}$, and it satisfies $v_2(\frac{n_1}{d})=1$ if and only if $d\in D_2$.
	If all odd-position eigenvalues of \(\mathrm{ICG}_{n_1}(E)\) are even, then Lemma \ref{lem:basic lem12+13} gives $E=D_2\cup 2D_2$. 
	Since \(E\) contains only proper divisors of \(n_1\), we must have
	\(\frac n4\notin D_2\). 
	Comparing the elements \(d\) satisfying
	\(v_2(\frac{n_1}{d})=0\) on both sides, we obtain $D_1\setminus\{\frac n2\}	=2D_2=2(D_2\setminus\{\frac n4\})$.
	Now suppose that all odd-position eigenvalues of \(\mathrm{ICG}_{n_1}(E)\) are odd. 
	We first show that \(\frac n4\in D_2\). Suppose not. 
	Set $\widetilde E=E\cup\{\frac n4\}$.
	For every odd \(j\), the added divisor \(\frac n4\) contributes $c(j,2)=-1$.
	Thus all odd-position eigenvalues of \(\mathrm{ICG}_{n_1}(\widetilde E)\) are even. 
	By Lemma \ref{lem:basic lem12+13}, $\widetilde E=(D_2\cup\{\frac n4\})
	\cup2(D_2\cup\{\frac n4\})$.
	But the right-hand side contains $2\cdot \frac n4=n_1$, which is not a proper divisor of \(n_1\). This contradiction shows that \(\frac n4\in D_2\).
	
	Now Lemma \ref{lem:basic lem12+13} gives $E=(D_2\setminus\{\frac n4\})\cup2(D_2\setminus\{\frac n4\})\cup\{\frac n4\}$.
	Comparing the elements \(d\) satisfying \(v_2(\frac{n_1}{d})=0\) on both sides, we obtain $D_1\setminus\{\frac n2\}=2(D_2\setminus\{\frac n4\})$.
	This completes the proof.
\end{proof}

It remains to determine which of the two special divisors \(\frac n4\) and \(\frac n2\) belongs to \(D\). 
The next proposition settles this point.

\begin{pro}\label{pro:structure-8midn}
	Let \(8\mid n\), \(G\cong D_{2n}\), and suppose that
	\(\Gamma=\mathrm{Cay}(G,S)\) admits perfect state transfer. 
	Put $D_2^*=D_2\setminus\{\frac n4\}$.
	Then $D_1\setminus \{\frac n2 \}=2D_2^*$, $D_0=4D_2^*$, and exactly one of \(\frac n4\) and \(\frac n2\) occurs in \(D\).
	Consequently, $D=D_{\ge 3}\cup D_2^*\cup 2D_2^*\cup 4D_2^*\cup\{\xi\}$, where $\xi\in\{\frac n4,\frac n2\}$.
    Moreover, \(\gamma=1\), $$
    \eta_{2j-1}=
    \begin{cases}
    	0, & \xi=\frac n4\\
    	-1, & \xi=\frac n2
    \end{cases}
    \ (1\le j\le \frac n4),\ \text{and}\ \eta_{2j}\equiv
    \begin{cases}
    	2, & \xi=\frac n4\\
    	1, & \xi=\frac n2
    \end{cases}
    \pmod 4,
    \ (1\le j\le \frac n4-1)$$
\end{pro}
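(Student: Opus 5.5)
The plan is to combine the structural results already obtained, namely Lemma~\ref{thm:-1 0} and Proposition~\ref{pro:D1-D2-8midn}, with one explicit evaluation of the even-indexed eigenvalues $\eta_{2j}$ for odd $j$. That evaluation should force exactly one of $\frac n4,\frac n2$ into $D$ and pin down $\gamma$.

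\textbf{Structure of $D_0$ and $D_1$.} Proposition~\ref{pro:D1-D2-8midn} gives $D_1\setminus\{\frac n2\}=2D_2^*$. Lemma~\ref{thm:-1 0} gives $D_0=2(D_1\setminus\{\frac n2\})$, hence $D_0=4D_2^*$. Writing $D=D_0\cup D_1\cup D_2\cup D_{\ge3}$ then yields
\[
D=D_{\ge 3}\cup D_2^*\cup 2D_2^*\cup 4D_2^*\cup \varepsilon\left\{\tfrac n2\right\}\cup\delta\left\{\tfrac n4\right\},
\]
where $\varepsilon=1$ if $\frac n2\in D$ and $\delta=1$ if $\frac n4\in D$ (and $0$ otherwise). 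So it remains to show $\varepsilon+\delta=1$ and to compute $\gamma$.

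\textbf{Evaluating $\eta_{2j}$ for odd $j$.} I will reuse the identity from the proof of Proposition~\ref{pro:D1-D2-8midn}: for odd $j$ with $1\le j\le\frac n4-1$,
\[
\eta_{2j}=2\sigma_j+\varepsilon,\qquad \sigma_j=\sum_{d\in E}c\left(j,\tfrac{n_1}{d}\right),\quad n_1=\tfrac n2,\quad E=2D_2^*\cup D_2^*\cup\delta\left\{\tfrac n4\right\}.
\]
For $d\in D_2^*$, the number $\frac{n_1}{2d}$ is odd and $\frac{n_1}{d}=2\cdot\frac{n_1}{2d}$. Hence Lemma~\ref{lem:ramanujan-odd}-(1) gives $c(j,\frac{n_1}{d})=-c(j,\frac{n_1}{2d})$, so the pair $d,2d$ cancels. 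The remaining divisor $\frac n4$, if present, contributes $c(j,2)=-1$. Therefore $\sigma_j=-\delta$ and $\eta_{2j}=\varepsilon-2\delta$ for every odd $j$. Since $8\mid n$, the index $j=1$ is admissible.

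\textbf{Case analysis on $\varepsilon$.} Recall that $\gamma$ is a finite integer, because $v_2(\eta_{j+1}-\eta_j)=\gamma$ in Lemma~\ref{lem:iff of PST}.
\begin{itemize}
\item If $\varepsilon=0$, Lemma~\ref{lem:basic lem16} gives $\gamma=v_2(\eta_2)=v_2(-2\delta)$. This rules out $\delta=0$, since then $v_2(0)=\infty$. So $\delta=1$ and $\gamma=1$.
\item If $\varepsilon=1$, Lemma~\ref{lem:basic lem16} gives $\gamma=v_2(\eta_2+1)=v_2(2-2\delta)$. This forces $\delta=0$ and again $\gamma=1$.
\end{itemize}
Thus exactly one of $\frac n4,\frac n2$ lies in $D$; call it $\xi$. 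In both cases $\gamma=1$.

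\textbf{Eigenvalue values.} The odd-indexed values $\eta_{2j-1}\in\{0,-1\}$ follow directly from Lemma~\ref{thm:-1 0}, with $\eta_{2j-1}=-1$ exactly when $\frac n2\in D_1$, that is, when $\xi=\frac n2$. For all $1\le j\le\frac n4-1$ (not only odd $j$), Lemma~\ref{lem:basic lem16} with $\gamma=1$ gives:
\begin{itemize}
\item $v_2(\eta_{2j})=1$, i.e.\ $\eta_{2j}\equiv 2\pmod 4$, when $\xi=\frac n4$;
\item $v_2(\eta_{2j}+1)=1$, i.e.\ $\eta_{2j}\equiv 1\pmod 4$, when $\xi=\frac n2$.
\end{itemize}

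\textbf{Main obstacle.} The only delicate point is the cancellation step. I must check that every element of $D_2^*$ is indeed paired with $2d$ in $E$ (guaranteed by $D_1\setminus\{\frac n2\}=2D_2^*$), and that the parity hypothesis of Lemma~\ref{lem:ramanujan-odd}-(1) holds for both $j$ and $\frac{n_1}{2d}$. After that, the $2$-adic contradictions in the case analysis are immediate.
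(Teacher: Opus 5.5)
Your proof is correct and follows the paper's argument. You use the same decomposition of $D$ with indicators $\varepsilon,\delta$, and the same key computation $\eta_2=\varepsilon-2\delta$ against $\eta_1=-\varepsilon$, which forces $\varepsilon\neq\delta$ and $\gamma=1$; the remaining values are then read off from Lemmas~\ref{thm:-1 0} and~\ref{lem:basic lem16}. The only cosmetic difference is that you get $\eta_2$ from the reduced identity $\eta_{2j}=2\sigma_j+\varepsilon$ with pair cancellation, whereas the paper evaluates the triples $\{d,2d,4d\}$ directly using the $c(2,\cdot)$ values in Lemma~\ref{lem:ramanujan-odd}-(2).
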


\begin{proof}
	By Lemma \ref{thm:-1 0}, $D_0=2(D_1\setminus\{\frac n2\})$.
	By Proposition~\ref{pro:D1-D2-8midn}, $D_1\setminus\{\frac n2\}=2(D_2\setminus\{\frac n4\})=2D_2^*$.
	Thus $D_0=4D_2^*$.
	Hence $D=D_{\ge 3}\cup D_2^*\cup 2D_2^*\cup 4D_2^*\cup	\varepsilon\{\frac n2\}\cup\delta\{\frac n4\}$,
	where
	\[
	\varepsilon=
	\begin{cases}
		1, & \text{if } \frac n2\in D_1,\\
		0, & \text{if } \frac n2\notin D_1,
	\end{cases}
	\quad
	\delta=
	\begin{cases}
		1, & \text{if } \frac n4\in D_2,\\
		0, & \text{if } \frac n4\notin D_2.
	\end{cases}
	\]
	
	We now determine \(\varepsilon\) and \(\delta\). 
	By Lemma \ref{thm:-1 0}, $\eta_1=-\varepsilon$.
	We compute \(\eta_2\). 
	For each \(d\in D_2^*\), put $m_d=\frac n{4d}$.
	Then \(m_d\) is odd, and the three divisors \(d,2d,4d\) occur in $D$ together.
	By Lemma \ref{lem:ramanujan-odd},
	\begin{align*}
		c\left(2,\frac nd\right)+c\left(2,\frac n{2d}\right)+	c\left(2,\frac n{4d}\right)=-2\mu(m_d)+\mu(m_d)+\mu(m_d)=0.
	\end{align*}
	The divisors in \(D_{\ge 3}\) also make no contribution to \(\eta_2\). The
	possible divisor \(\frac n2\in D_1\) contributes $c(2,2)=1$, and the possible divisor \(\frac n4\in D_2\) contributes $c(2,4)=-2$.
	Therefore $\eta_2=\varepsilon-2\delta$.
	It follows that $\eta_2-\eta_1=2(\varepsilon-\delta)$.
	By Lemma \ref{lem:iff of PST}, \(v_2(\eta_2-\eta_1)=\gamma\), where \(\gamma\) is an integer.
	Hence \(\eta_2-\eta_1\neq 0\), and so \(\varepsilon\neq\delta\). Since
	\(\varepsilon,\delta\in\{0,1\}\), exactly one of them is equal to \(1\). 
	Thus exactly one of \(\frac n4\) and \(\frac n2\) occurs in \(D\), and $\gamma=v_2(2(\varepsilon-\delta))=1$.
	
	If \(\xi=\frac n4\), then \((\varepsilon,\delta)=(0,1)\). 
	By Lemmas \ref{thm:-1 0} and  \ref{lem:basic lem16}, $\eta_{2j-1}=0$, where $1\le j\le \frac n4$, and $v_2(\eta_{2j})=1$, where $1\le j\le \frac n4-1$.
	Hence $\eta_{2j}\equiv 2 \pmod 4$, $1\le j\le \frac n4-1$.
	
	If \(\xi=\frac n2\), then \((\varepsilon,\delta)=(1,0)\). 
	By Lemmas \ref{thm:-1 0} and \ref{lem:basic lem16}, $\eta_{2j-1}=-1$, where $1\le j\le \frac n4$, and $v_2(\eta_{2j}+1)=1$, where $1\le j\le \frac n4-1$.
	Hence $\eta_{2j}\equiv 1 \pmod 4$, $1\le j\le \frac n4-1$.
	This completes the proof.
\end{proof}

The preceding results determine the required structure of \(D_0,D_1\) and \(D_2\). 
Next lemma shows that \(D_{\ge3}\) does not affect the transfer matrix at time \(\frac \pi 2\).

\begin{lem}\label{lem:D3 PST}
	Assume that $S_0=\bigcup\limits_{d\in D}G_n(d)$.
	Let $S_{\ge  3}:=\bigcup\limits_{d\in D_{\ge  3}}G_n(d)$, $\Gamma_{\ge  3}:=\mathrm{Cay}(D_{2n},S_{\ge  3})$,
	and $A_{\ge  3}$ be the adjacency matrix of $\Gamma_{\ge  3}$.
	Then every eigenvalue of $\Gamma_{\ge  3}$ is divisible by $4$.
	In particular, $\exp(-\frac{\pi\mathrm{i}}2A_{\ge  3})=I$.
\end{lem}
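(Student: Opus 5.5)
The plan is to compute the eigenvalues of $\Gamma_{\ge 3}$ directly from Corollary~\ref{core:eigenvalues of D2n}, applied with divisor set $D_{\ge 3}$ and with no reflections ($\alpha=\beta=0$). The graph $\Gamma_{\ge 3}$ is disconnected, but Lemma~\ref{lem:eigenvalues of quasi-abelian graph} and Corollary~\ref{core:eigenvalues of D2n} do not require connectedness. Hence the eigenvalues of $\Gamma_{\ge 3}$ are
\[
\lambda_1=\lambda_2=A_{\ge 3},\qquad \lambda_3=\lambda_4=T_{\ge 3},\qquad \eta_h=\sum_{d\in D_{\ge 3}}c\left(h,\tfrac nd\right)\quad \left(1\le h\le \tfrac n2-1\right),
\]
where $A_{\ge 3}=|S_{\ge 3}|$ and $T_{\ge 3}=\sum_{a^k\in S_{\ge 3}}(-1)^k$. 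All three families are sums over $d\in D_{\ge 3}$, so it suffices to show that each single term is divisible by $4$.

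Fix $d\in D_{\ge 3}$ and put $m=\frac nd$, so that $v_2(m)\ge 3$.

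\textbf{The term $|G_n(d)|$.} We have $|G_n(d)|=\varphi(m)$, and $\varphi(m)$ is divisible by $\varphi(2^{v_2(m)})=2^{v_2(m)-1}$, which is divisible by $4$.

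\textbf{The term in $T_{\ge 3}$.} Every $a^k\in G_n(d)$ has $d\mid k$. Since $v_2(n)\ge 3$ and $v_2(d)\le v_2(n)-3$, the parity of $k$ is the parity of $d$ times an integer coprime to $m$. Because $m$ is even, such an integer is odd, so $(-1)^k=(-1)^d$ is constant on $G_n(d)$. The contribution is therefore $\pm\varphi(m)$, again divisible by $4$.

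\textbf{The Ramanujan sum $c(h,m)$.} By \eqref{equation:Ramanujan sum}, $c(h,m)=\mu(t)\varphi(m)/\varphi(t)$ with $t=\frac m{\gcd(m,h)}$. If $4\mid t$, then $\mu(t)=0$. Otherwise $v_2(t)\le 1$, so $\varphi(t)$ has the same odd part of divisibility as $\varphi$ of the odd part of $t$. Writing $m=2^s q$ and $t=2^{s'}q'$ with $s'\le 1$, $q'\mid q$, we get $\frac{\varphi(m)}{\varphi(t)}=2^{s-1}\frac{\varphi(q)}{\varphi(q')}$, up to an integer factor. Since $s\ge 3$, this is divisible by $4$. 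Alternatively, apply Lemma~\ref{lem:basic lem 17} repeatedly, or argue via the $2$-part of $\varphi$, which I'd make precise with the multiplicativity in Lemma~\ref{lem:ramanujan-odd}-(4): $c(h,m)=c(h,2^s)c(h,q)$, and $c(h,2^s)\in\{0,\pm 2^{s-1},2^{s-1}\}$, hence is $0$ or divisible by $2^{s-1}\ge 4$.

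This last step is the only place needing care. The cleanest route is the factorization $c(h,m)=c(h,2^s)\,c(h,q)$, together with the explicit values $c(h,2^s)=2^{s-1}$, $-2^{s-1}$ or $0$ according as $2^s\mid h$, $2^{s-1}\,\|\,h$, or otherwise.

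\textbf{Conclusion.} Since every eigenvalue $\lambda$ of $A_{\ge 3}$ satisfies $4\mid\lambda$, we get $e^{-\frac{\pi \mathrm i}{2}\lambda}=1$. Because $A_{\ge 3}$ is symmetric, hence diagonalizable, it follows that $\exp(-\frac{\pi\mathrm i}2A_{\ge 3})=I$.
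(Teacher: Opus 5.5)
Your proof is correct and follows essentially the same route as the paper: read off the eigenvalues from Corollary~\ref{core:eigenvalues of D2n} with $S_1=\emptyset$, show $4\mid\varphi(\frac nd)$ and that $(-1)^k$ is constant on $G_n(d)$, use that $t_{m,h}$ is square-free so every nonzero $c(h,\frac nd)$ is a multiple of $4$, and then diagonalize $A_{\ge 3}$. Your identity $\frac{\varphi(m)}{\varphi(t)}=2^{s-1}\frac{\varphi(q)}{\varphi(q')}$ is in fact an exact equality (not merely ``up to an integer factor''), and it spells out the divisibility step that the paper states tersely.
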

\begin{proof}
	Since \(S_{\ge 3}\subseteq \langle a\rangle\), we have \(S_1=\emptyset\) for \(\Gamma_{\ge 3}\). 
	Hence by
	Corollary \ref{core:eigenvalues of D2n}, its eigenvalues are
	\[
	|S_{\ge  3}|,\ |S_{\ge  3}|,\ \sum_{a^k\in S_{\ge  3}}(-1)^k,\ \sum_{a^k\in S_{\ge  3}}(-1)^{k},\quad 
	\Xi_h=\sum_{d\in D_{\ge  3}}c \left(h,\frac nd\right),\quad 1\le  h\le  \frac n2-1.
	\]
	We will show that, the above eigenvalues are all divisible by $4$.
	
	For $d\in D_{\ge  3}$, we have $v_2(\frac nd)\ge  3$, so $8\mid \frac nd$.
	Thus $\varphi \left(\frac nd\right)\in 4\mathbb Z$, and so $|S_{\ge  3}|=\sum\limits_{d\in D_{\ge  3}}\varphi \left(\frac nd\right)\in 4\mathbb Z$.
	
	Next fix \(d\in D_{\ge 3}\). 
	If \(a^k\in G_n(d)\), then
	\(\gcd(k,n)=d\), and hence \(k\) and \(d\) have the same parity. 
	Therefore \((-1)^k\) is constant on \(G_n(d)\), and
	\[
	\sum_{a^k\in G_n(d)}(-1)^k
	=
	\begin{cases}
		\varphi\left(\frac nd\right), & d\text{ even},\\
		-\varphi\left(\frac nd\right), & d\text{ odd}.
	\end{cases}
	\]
	Since \(d\in D_{\ge 3}\), we have \(\varphi(\frac nd)\in 4\mathbb Z\). 
	Hence every \(G_n(d)\) contributes a multiple of \(4\) to
	\(\sum\limits_{a^k\in S_{\ge3}}(-1)^k\), and therefore $\sum\limits_{a^k\in S_{\ge3}}(-1)^k\in 4\mathbb Z$.
	Moreover, $\sum\limits_{a^k\in S_{\ge 3}}(-1)^{k+1}	=	-\sum\limits_{a^k\in S_{\ge 3}}(-1)^k	\in 4\mathbb Z$.
	
	Finally, for fixed $d\in D_{\ge  3}$ and put $m=\frac nd$. Then $v_2(m)\ge  3$. If $c(h,m)=0$, there is nothing to prove.
	Otherwise,
	\[
	c(h,m)=\mu(t_{m,h})\frac{\varphi(m)}{\varphi(t_{m,h})},
	\quad
	t_{m,h}=\frac{m}{\gcd(m,h)}.
	\]
	Since \(c(h,m)\neq 0\), we have \(\mu(t_{m,h})\neq 0\), and hence \(t_{m,h}\) is square-free.
	Therefore the \(2\)-part of \(t_{m,h}\) is at most \(2\). 
	As the \(2\)-part of \(m\) is at least \(2^3\),
	it follows that $\Xi_h=\sum\limits_{d\in D_{\ge  3}}c(h,\frac nd)\in 4\mathbb Z$.
	Thus every nonzero term \(c(h,m)\) belongs to \(4\mathbb Z\), and therefore $\Xi_h\in 4\mathbb Z$.
	Hence every eigenvalue of \(\Gamma_{\ge 3}\) is divisible by \(4\).
	
	Since \(A_{\ge 3}\) is real symmetric, it is diagonalizable over \(\mathbb R\). 
	Since every eigenvalue \(\lambda\) of \(A_{\ge 3}\) is divisible by \(4\), so $\exp \left(-\frac{\pi\mathrm{i}}2\lambda\right)=1$.
	Therefore, by diagonalizing \(A_{\ge 3}\), we obtain $\exp \left(-\frac{\pi\mathrm{i}}2A_{\ge 3}\right)=I$.
	This completes the proof.
\end{proof}

We are now ready to characterize the case \(8\mid n\).

\begin{thm}\label{thm:8midn-characterization}
	Let \(8\mid n\), \(G\cong D_{2n}\), and let \(S=S_0\cup S_1\subseteq G\setminus\{1\}\) be conjugation-closed, where \(S_0=S\cap\langle a\rangle\) and \(S_1=S\cap b\langle a\rangle\).
	Let \(\Gamma=\mathrm{Cay}(G,S)\) be connected.
	Then \(\Gamma\) admits perfect state transfer if and only if there exist $E\subseteq\{d:d\mid n,\ v_2(\frac nd)=2\}
	\setminus\{\frac n4\}$, $F\subseteq\{d:d\mid n,\ v_2(\frac nd)\ge 3\}$, and $\xi\in\{\frac n4,\frac n2\}$ such that $S_0=
	\bigcup\limits_{d\in D}G_n(d)$, where $D= F\cup E\cup 2E\cup 4E\cup\{\xi\}$.
	In this case, the only restriction on \(S_1\) is that it is nonempty.
\end{thm}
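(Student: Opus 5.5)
The necessity direction is essentially done by Proposition~\ref{pro:structure-8midn}. If $\Gamma$ admits PST, then $D=D_{\ge3}\cup D_2^*\cup 2D_2^*\cup 4D_2^*\cup\{\xi\}$ with $\xi\in\{\frac n4,\frac n2\}$. We take $E=D_2^*$ and $F=D_{\ge3}$. Connectedness forces $S_1\neq\emptyset$, since $S_0\subseteq\langle a\rangle$. So the real work is sufficiency: given $D$ of the stated form and any nonempty $S_1$, i.e.\ $(\alpha,\beta)\ne(0,0)$, we must show PST occurs.

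For sufficiency I would verify the conditions of Lemma~\ref{lem:iff of PST}(2) with $\gamma=1$, starting with the $\eta_h$. For odd $h$, the divisors in $D_{\ge2}$ contribute $0$. The pairs $(2e,4e)$ with $v_2(n/(2e))=1$ and $v_2(n/(4e))=0$ cancel by Lemma~\ref{lem:ramanujan-odd}(1). The divisor $\frac n2$, if present, contributes $c(h,2)=-1$. Hence $\eta_h=-[\xi=\frac n2]$ for odd $h$. For $h=2j$ with $j$ odd, the argument of Proposition~\ref{pro:D1-D2-8midn} gives $\eta_{2j}=2\sigma_j+\varepsilon$, where $\sigma_j$ are the odd-position eigenvalues of $\mathrm{ICG}_{n/2}(2E\cup E\cup\delta\{\frac n4\})$. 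By Lemma~\ref{lem:basic lem12+13}, $\sigma_j$ is even when $\xi=\frac n2$ and odd when $\xi=\frac n4$.

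The main obstacle is the $\eta_{2j}$ with $j$ even, since these involve $F$ and deeper $2$-adic structure. Rather than computing them, I would avoid verifying Lemma~\ref{lem:iff of PST} directly. Instead I would compute $H(\pi/2)$ using the decomposition $A=A_{\ge3}+A'$, where $A'$ is the adjacency matrix of $\mathrm{Cay}(G,S\setminus S_{\ge3})$. Because $S$ is conjugation-closed, both connection sets are unions of conjugacy classes, so $A_{\ge3}$ and $A'$ commute. Then Lemma~\ref{lem:D3 PST} gives $\exp(-\frac{\pi i}{2}A)=\exp(-\frac{\pi i}{2}A')$. This reduces the problem to $F=\emptyset$.

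In the case $F=\emptyset$, every $\eta_h$ is computable. For $j$ even, $c(2j,n/d)$ for $d\in E\cup2E\cup4E$ sums to $0$ via Lemma~\ref{lem:basic lem 17} and Lemma~\ref{lem:ramanujan-odd}. This gives $\eta_{2j}=\varepsilon+\delta\,c(2j,4)=\varepsilon+2\delta$. Hence, for $\xi=\frac n4$, the $\eta_h$ alternate $0,\equiv2\pmod 4$, and for $\xi=\frac n2$ they alternate $-1,\equiv1\pmod4$. Then:
\begin{itemize}
\item $v_2(\eta_{h+1}-\eta_h)=1$ holds for all $h$.
\item $\lambda_1=A+\frac n2(\alpha+\beta)$, with $A=|S_0|$ computed from $\varphi$-values. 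The $E$-blocks contribute multiples of $4$ (as $4\mid\varphi(n/d)$ plus parity considerations), and $\xi$ contributes $\varphi(4)=2$ or $\varphi(2)=1$. Together with $4\mid\frac n2$, this makes $v_2(\eta_1-\lambda_1)=1$.
\item The differences $\lambda_1-\lambda_2=n(\alpha+\beta)$ and $\lambda_1-\lambda_{3,4}=A-T\pm$ multiples of $n$ need to be checked divisible by $4$. Here $A-T$ counts twice the odd-exponent elements of $S_0$, namely those in $G_n(d)$ with $d$ odd, i.e.\ the blocks $4E$. Each such block has size $\varphi(n/(4e))$, which is even, so $A-T\equiv0\pmod4$.
\end{itemize}
Thus PST holds, between $g$ and $ga^{n/2}$, for $A'$ at time $\frac\pi2$, and therefore for $\Gamma$ at time $\frac\pi2$ as well.
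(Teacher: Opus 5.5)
Your route is the paper's own. Necessity comes from Propositions~\ref{pro:D1-D2-8midn} and~\ref{pro:structure-8midn} with $E=D_2^*$ and $F=D_{\ge3}$. Sufficiency splits $A=A'+A_{\ge3}$, uses Lemma~\ref{lem:D3 PST} to remove $F$, and checks Lemma~\ref{lem:iff of PST} for $\Gamma'$ with $\gamma=1$. The structure is sound. However, two of your computations are false as stated. They do no damage only because you end up using nothing beyond residues modulo $4$.

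\emph{First slip: the $E$-triples in $\eta_{2j}$ for even $j$.} For even $j$, the triple $\{e,2e,4e\}$ does not contribute $0$ to $\eta_{2j}$. Put $m_e=\frac n{4e}$. Since $4\mid 2j$, we have $c(2j,4m_e)=c(2j,4)\,c(2j,m_e)=2c(2j,m_e)$. Hence the triple contributes $2c+c+c=4c(2j,m_e)$. This is the paper's $2(1+(-1)^j)c(2j,m_e)$, which vanishes only for odd $j$. For example, take $n=24$, $E=\{2\}$, $\xi=6$, $j=2$. The triple gives $c(4,12)+c(4,6)+c(4,3)=-4$, so $\eta_4=-4+c(4,4)=-2$, not the value $2$ your formula predicts. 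So $\eta_{2j}=\varepsilon+2\delta$ holds only as a congruence modulo $4$. That congruence is all your subsequent argument actually uses.

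\emph{Second slip: which divisors carry odd exponents.} When $8\mid n$, every $e\in E$ has $v_2(e)=v_2(n)-2\ge 1$. Therefore $E$, $2E$, $4E$ and $\xi$ consist entirely of even divisors. An odd divisor $d$ has $v_2(n/d)=v_2(n)\ge3$, so it can only lie in $F$. After removing $F$, there are no odd exponents in $S_0'$ at all, and $A'=T'$ exactly. Your identification of the odd-exponent elements with the blocks $4E$ describes the case $n\equiv 4\pmod 8$, not this one. The conclusion $A'-T'\equiv 0\pmod 4$ is still true, but not for the reason you give.

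Relatedly, individual values $\varphi(n/d)$ need not be divisible by $4$; for $d=4$, $n=24$ we get $\varphi(6)=2$. What is divisible by $4$ is the block sum $\varphi(4m_e)+\varphi(2m_e)+\varphi(m_e)=4\varphi(m_e)$. With these corrections your proof coincides with the paper's.
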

\begin{proof}
\noindent\emph{Necessity.} 
The asserted form of \(D\) follows from Propositions~\ref{pro:D1-D2-8midn} and~\ref{pro:structure-8midn} by taking \(E=D_2\setminus\{\frac n4\}\) and \(F=D_{\ge3}\).
By Corollary~\ref{core:eigenvalues of D2n}, \(S_1\) affects only
\(\lambda_1,\lambda_2,\lambda_3,\lambda_4\). 
Hence, once \(S_0\) has the stated form, it is enough to verify the \(2\)-adic conditions in Lemma~\ref{lem:iff of PST} involving these four eigenvalues for every nonempty conjugation-closed choice of \(S_1\).

Write $A=|S_0|$ and $T=\sum\limits_{a^k\in S_0}(-1)^k$.
For \(e\in E\), put \(m_e=\frac{n}{4e}\). 
Then \(m_e\) is odd. 
The parts \(G_n(e)\), \(G_n(2e)\), and \(G_n(4e)\) contribute to \(A\) is $\varphi(4m_e)+\varphi(2m_e)+\varphi(m_e)=4\varphi(m_e)$.
Since \(8\mid n\), \(e\) is even.
Hence all exponents occurring in \(G_n(e)\), \(G_n(2e)\), \(G_n(4e)\) are even, and so their contribution to \(T\) is also \(4\varphi(m_e)\).
Thus each triple \(\{e,2e,4e\}\) contributes \(0\pmod 4\) to both \(A\) and \(T\).

If \(d\in F\), then \(v_2(\frac nd)\ge 3\), and hence
\(\varphi(\frac nd)\in 4\mathbb Z\). 
Thus \(G_n(d)\) contributes a multiple of \(4\) to \(A\). Moreover, for \(a^k\in G_n(d)\), the integers \(k\) and \(d\) have the same parity. 
Hence \((-1)^k\) is constant on \(G_n(d)\), so the contribution of \(G_n(d)\) to \(T\) is \(\pm\varphi(\frac nd)\), also a multiple of \(4\).

If \(\xi=\frac n4\), then $|G_n(\frac n4)|=\varphi(4)=2$.
Since \(\frac n4\) is even, every exponent occurring in \(G_n(\frac n4)\) is even. 
Thus this part contributes \(2\) to both \(A\) and \(T\). 
If \(\xi=\frac n2\), then $|G_n(\frac n2)|=\varphi(2)=1$, and the same parity argument shows that it contributes \(1\) to both \(A\) and \(T\). 
Therefore, \(A\equiv T\equiv 2\pmod 4\) when \(\xi=\frac n4\), and \(A\equiv T\equiv 1\pmod 4\) when \(\xi=\frac n2\).

By Proposition~\ref{pro:structure-8midn}, \(\eta_1=0\) if \(\xi=\frac n4\), and \(\eta_1=-1\) if \(\xi=\frac n2\).
Since \(8\mid n\), by Corollary~\ref{core:eigenvalues of D2n}, a direct computation gives \(\lambda_1\equiv \lambda_2\equiv A\pmod 4\) and \(\lambda_3\equiv \lambda_4\equiv T\pmod 4\). 
Since \(A\equiv T\pmod 4\), it follows that \(\lambda_1\equiv\lambda_2\equiv\lambda_3\equiv\lambda_4\pmod 4\), and in both cases \(\lambda_1-\eta_1\equiv 2\pmod 4\).
Thus \(v_2(\lambda_1-\eta_1)=1=\gamma\) and \(v_2(\lambda_1-\lambda_i)>1=\gamma\) for \(i=2,3,4\). 
Therefore Lemma~\ref{lem:iff of PST} imposes no restriction on \(\alpha,\beta\).
Hence \(S_1\) is arbitrary.

\medskip
\noindent\emph{Sufficiency.}
Assume \(S_0\) has the stated form and \(S_1\) is a nonempty conjugation-closed subset of \(b\langle a\rangle\). 
We prove that the conditions in Lemma \ref{lem:iff of PST} hold with \(\gamma=1\).
For \(D\), we have $D_{\ge 3}=F$.
Let
\[
S_{\ge 3}:=\bigcup_{d\in F}G_n(d),~~S_0':=\bigcup_{d\in D\setminus F}G_n(d),~~S':=S_0'\cup S_1,~~\Gamma_{\ge3}:=\mathrm{Cay}(G,S_{\ge3}),~~\Gamma':=\mathrm{Cay}(G,S'),\]
and let \(A,A'\), and \(A_{\ge 3}\) be the adjacency matrices of
\(\Gamma,\Gamma'\), and \(\Gamma_{\ge 3}\), respectively. 
Then $A=A'+A_{\ge 3}$.
Since \(S'\) and \(S_{\ge 3}\) are inverse-closed and conjugation-closed, the matrices \(A'\) and \(A_{\ge 3}\) commute. 
By Lemma~\ref{lem:D3 PST}, $\exp(-\frac{\pi\mathrm{i}}2A_{\ge 3})=I$.
Therefore
\begin{align*}
	\exp\left(-\frac{\pi\mathrm{i}}2A\right)=\exp\left(-\frac{\pi\mathrm{i}}2(A'+A_{\ge 3})\right)=\exp\left(-\frac{\pi\mathrm{i}}2A'\right)	\exp\left(-\frac{\pi\mathrm{i}}2A_{\ge3}\right)=\exp\left(-\frac{\pi\mathrm{i}}2A'\right).
\end{align*}
Thus \(\Gamma\) and \(\Gamma'\) have the same transfer matrix at time
\(\frac{\pi}{2}\). 
Hence it is enough to prove that \(\Gamma'\) has perfect state transfer at time \(\frac{\pi}{2}\). 

Let $D'=D\setminus F$.
In the following, we verify Lemma~\ref{lem:iff of PST} for \(\Gamma'\).
We denote the corresponding eigenvalues of \(\Gamma'\) by $\lambda_1'$, $\lambda_2'$, $\lambda_3'$, $\lambda_4'$, $\eta_h'$, $1\le h\le \frac n2-1$.

Write $S_1=\alpha R_0\cup \beta R_1$, $\alpha,\beta\in\{0,1\}$.
We verify Lemma~\ref{lem:iff of PST} with \(\gamma=1\).

For each \(e\in E\), let $m_e=\frac n{4e}$.
Then \(m_e\) is odd. 
For \(1\le j\le \frac n4\), set \(h=2j-1\).
Then Lemma~\ref{lem:ramanujan-odd} gives $c(h,4m_e)+c(h,2m_e)+c(h,m_e)=0$.
Moreover, for \(1\le j\le \frac n4-1\), $c(2j,4m_e)+c(2j,2m_e)+c(2j,m_e)=2(1+(-1)^j)c(2j,m_e)\in 4\mathbb Z$.
Thus every triple \(\{e,2e,4e\}\), \(e\in E\), contributes \(0\) to the
odd-indexed \(\eta'_h\)'s and contributes a multiple of \(4\) to the
even-indexed \(\eta'_{2j}\)'s.
For \(\xi\), we have \(c(2j-1,4)=0\) and \(c(2j,4)\equiv 2\pmod 4\) when
\(\xi=\frac n4\), while \(c(2j-1,2)=-1\) and \(c(2j,2)=1\) when \(\xi=\frac n2\).
Thus
\[
\eta'_{2j-1}=
\begin{cases}
	0, & \xi=\frac n4,\\
	-1, & \xi=\frac n2,
\end{cases}
\quad 1\le j\le \frac n4,\quad \text{and}\quad
\eta'_{2j}\equiv
\begin{cases}
	2, & \xi=\frac n4,\\
	1, & \xi=\frac n2
\end{cases}
\pmod 4,
\quad 1\le j\le \frac n4-1.
\]
Therefore \(v_2(\eta'_{h+1}-\eta'_h)=1\), \(1\le h\le \frac n2-2\).

It remains to check the conditions involving \(\lambda_1',\lambda_2',\lambda_3',\lambda_4'\). 
Let $A'=|S_0'|$, $T'=\sum\limits_{a^k\in S_0'}(-1)^k$.
For each \(e\in E\), the triple \(\{e,2e,4e\}\) contributes $\varphi(4m_e)+\varphi(2m_e)+\varphi(m_e)=4\varphi(m_e)$ to \(A'\), and its contribution to \(T'\) is also a multiple of \(4\).
Therefore, modulo \(4\), the values of \(A'\) and \(T'\) are determined by the contribution of \(\xi\).
Hence
\[
A'\equiv T'\equiv
\begin{cases}
	2, & \xi=\frac n4,\\
	1, & \xi=\frac n2
\end{cases}
\pmod 4.
\]
Since \(8\mid n\), the reflection contributions to
\(\lambda'_1,\lambda'_2,\lambda'_3,\lambda'_4\) are divisible by \(4\).
By Corollary~\ref{core:eigenvalues of D2n}, we obtain $\lambda'_1\equiv \lambda'_2\equiv \lambda'_3\equiv \lambda'_4\pmod 4$.
Thus \(v_2(\lambda'_1-\lambda'_i)>1\), \(i=2,3,4\). 
Moreover, from the above values of \(\eta'_1\), we have $\lambda'_1-\eta'_1\equiv 2 \pmod 4$.
Hence \(v_2(\lambda'_1-\eta'_1)=1\).
All conditions in Lemma~\ref{lem:iff of PST} hold for \(\Gamma'\) with \(\gamma=1\). 
Hence \(\Gamma'\) has perfect state transfer at time \(\frac{\pi}{2}\). 
Since \(\Gamma\) and \(\Gamma'\) have the same transfer matrix at time \(\frac{\pi}{2}\), the graph \(\Gamma\) also has perfect state transfer.
\end{proof}

\begin{core}\label{cor:minimal-time-8midn}
	Under the assumptions of Theorem~\ref{thm:8midn-characterization}, if
	\(\Gamma\) admits perfect state transfer, then the minimum time at which
	perfect state transfer occurs is \(\frac{\pi}{2}\).
\end{core}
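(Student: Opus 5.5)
The plan is to use the formula for the minimum time in Lemma~\ref{lem:iff of PST}, namely $\frac{\pi}{M}$ with $M=\gcd(\lambda_1-\lambda:\lambda\neq\lambda_1)$, where $\lambda$ runs over the eigenvalues of $\Gamma$ distinct from $\lambda_1$. It therefore suffices to show that $M=2$. By Theorem~\ref{thm:8midn-characterization} and Proposition~\ref{pro:structure-8midn}, we may assume $D=F\cup E\cup 2E\cup 4E\cup\{\xi\}$ and $\gamma=1$. In particular $v_2(\eta_1-\lambda_1)=1$, so $\lambda_1-\eta_1\equiv 2\pmod 4$.

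\emph{Step 1: $2\mid M$.} Every difference $\lambda_1-\lambda$ is even. For $\lambda\in\{\lambda_2,\lambda_3,\lambda_4\}$ this holds because $v_2(\lambda_1-\lambda_i)>\gamma=1$ whenever $\lambda_i\neq\lambda_1$. For $\lambda=\eta_h$ we write
\[
\lambda_1-\eta_h=(\lambda_1-\eta_1)-\sum_{i=1}^{h-1}(\eta_{i+1}-\eta_i).
\]
Each summand has $2$-adic valuation exactly $1$ by Lemma~\ref{lem:iff of PST}(2), so $\lambda_1-\eta_h$ is even. A more direct route is to use the explicit residues from Proposition~\ref{pro:structure-8midn}: $\eta_{2j-1}\in\{0,-1\}$ and $\eta_{2j}\equiv\eta_1+2\pmod 4$. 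Either way, $2\mid M$.

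\emph{Step 2: $M$ has no odd prime factor and $4\nmid M$.} Since $\eta_1\neq\lambda_1$ and $\lambda_1-\eta_1\equiv 2\pmod 4$, we get $4\nmid M$. For an odd prime $p$, we use the fact that $\eta_1$ and $\eta_2$ are both eigenvalues distinct from $\lambda_1$. By Proposition~\ref{pro:structure-8midn}, $v_2(\eta_2-\eta_1)=1$. From the computation in its proof, $\eta_2-\eta_1=2(\varepsilon-\delta)=\pm 2$. Hence
\[
M\mid\gcd(\lambda_1-\eta_1,\lambda_1-\eta_2)\mid(\eta_2-\eta_1)=\pm 2.
\]
Combining this with Step~1 gives $M=2$. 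One must check that $\eta_2$ actually occurs, i.e.\ $\frac n2-1\ge 2$; this holds since $8\mid n$.

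Hence the minimum time is $\frac{\pi}{2}$. I expect the only delicate point to be confirming that $\eta_1$ and $\eta_2$ are genuinely distinct from $\lambda_1$, so that they enter the gcd. This follows because their differences from $\lambda_1$ have finite $2$-adic valuation $\gamma=1$, hence are nonzero.
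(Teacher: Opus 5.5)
Your route is the paper's: $2\mid M$ because every nonzero $\lambda_1-\lambda$ is even, and $M\mid(\eta_2-\eta_1)=\pm2$. Citing $\eta_2-\eta_1=2(\varepsilon-\delta)$ from the proof of Proposition~\ref{pro:structure-8midn} is the same computation that the paper redoes from the shape of $D$.

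However, your justification that $\eta_2\neq\lambda_1$ is incorrect. You write $\lambda_1-\eta_2=(\lambda_1-\eta_1)-(\eta_2-\eta_1)$, and both terms are integers $\equiv 2\pmod 4$. So $v_2(\lambda_1-\eta_2)\ge 2$, not $=1$. The $2$-adic conditions of Lemma~\ref{lem:iff of PST} therefore do not by themselves exclude $\eta_2=\lambda_1$. For instance, $n=8$, $D=\{4\}$, $S_1=b\langle a\rangle$ gives $\lambda_1=9$, $\eta_2=1$ and $v_2(\lambda_1-\eta_2)=3$.

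The repair is immediate. One option is to use connectedness, as the paper does: $\lambda_1=|S|$ is a simple eigenvalue. More directly, $|\eta_h|\le|S_0|<|S_0|+|S_1|=\lambda_1$ since $S_1\neq\emptyset$. Another option is to note that if $\eta_2=\lambda_1$, then $\lambda_1-\eta_1=\eta_2-\eta_1=\pm2$ already forces $M\mid 2$. Once $M\mid 2$ is established, your separate remark that $4\nmid M$ is redundant.
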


\begin{proof}
	Let $M=\gcd\{\lambda_1-\lambda:\lambda\ne \lambda_1\}$.
	By Lemma~\ref{lem:iff of PST}, the minimum perfect state transfer time is $\frac{\pi}{M}$. 
	It remains to show that \(M=2\).
	
	From the proof of Theorem~\ref{thm:8midn-characterization}, all nonzero differences \(\lambda_1-\lambda\) are even. Hence \(2\mid M\).
	We now show that \(M\mid 2\). 
	By Theorem~\ref{thm:8midn-characterization}, $D=F\cup E\cup 2E\cup 4E\cup\{\xi\}$, $\xi\in\{\frac n4,\frac n2\}$.
	The divisors in \(F\) make no contribution to \(\eta_1\) and \(\eta_2\), and each triple \(\{e,2e,4e\}\), with \(e\in E\), also makes no contribution to \(\eta_1\) and \(\eta_2\). 
	Thus only \(\xi\) contributes to \(\eta_1\) and \(\eta_2\). 
	Hence
	\[
	\eta_1=
	\begin{cases}
		0, & \xi=\frac n4,\\
		-1, & \xi=\frac n2,
	\end{cases}
	\quad \text{and}\quad
	\eta_2=
	\begin{cases}
		-2, & \xi=\frac n4,\\
		1, & \xi=\frac n2.
	\end{cases}
	\]
	Therefore $\eta_2-\eta_1=\pm 2$.
	Since \(\Gamma\) is connected, \(\lambda_1\) is the unique largest eigenvalue. 
	Hence \(\eta_1,\eta_2\ne\lambda_1\).
	It follows that $M$ divides both \(\lambda_1-\eta_1\) and \(\lambda_1-\eta_2\), and hence $M\mid (\eta_2-\eta_1)=\pm 2$.
	Therefore \(M\mid 2\). 
	Since \(2\mid M\), we get \(M=2\). 
	Hence the minimum perfect state transfer time is $\frac{\pi}{M}=\frac{\pi}{2}$.
\end{proof}

It remains to consider the case \(n\equiv 4 \pmod 8\). 
Since \(S\) is conjugation-closed, we may write $S_1=\alpha b\langle a^2\rangle\cup \beta ba\langle a^2\rangle$, $\alpha,\beta\in\{0,1\}$.
Since \(\Gamma\) is connected, we have \(S_1\neq\emptyset\). 
Hence the case \(\alpha=\beta\) is equivalent to \(S_1=b\langle a\rangle\), whereas the case \(\alpha\ne\beta\) is equivalent to \(S_1\in\{b\langle a^2\rangle,ba\langle a^2\rangle\}\). 
We now characterize the connection sets in these two cases separately.
Before doing so, we record a lemma that will be used in both arguments.

\begin{lem}\label{lem:n4mod8-common-computation}
	Let \(n=4q\), where \(q\) is odd.
	Assume that \(\Gamma\) admits perfect state transfer. 
	Then \(\Gamma\) is integral, and hence $S_0=\bigcup\limits_{d\in D}G_n(d)$, where \(D\) is a set of proper divisors of \(n\). 
	Set $H=(D_1\setminus\{2q\})\cup D_2$, and let $\sigma_j=\sum\limits_{d\in H}c(j,\frac{2q}{d})$, $0\le j\le 2q-1$, be the eigenvalues of \(\mathrm{ICG}_{2q}(H)\). 
	Let $\varepsilon=1$ if $2q\in D_1$ and $\varepsilon=0$ if $2q\notin D_1$.
	Then $\eta_{2j}=2\sigma_j+\varepsilon$, $\eta_{2j-1}=-\varepsilon$, where \(1\le j\le q-1\).
	Moreover, $|S_0|=2\sigma_0+\varepsilon$, $\sum\limits_{a^k\in S_0}(-1)^k=2\sigma_q+\varepsilon$.
\end{lem}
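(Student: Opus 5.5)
The plan is to reduce everything to the structure of $D$ already obtained in Lemma~\ref{thm:-1 0} and then evaluate Ramanujan sums term by term, exactly as in the proof of Proposition~\ref{pro:D1-D2-8midn}. Since $n=4q$ with $q$ odd, we have $v_2(n)=2$, so $D=D_0\cup D_1\cup D_2$ and $D_{\ge 3}=\emptyset$. Integrality follows from Lemma~\ref{lem:iff of PST}, and the form of $S_0$ from Corollary~\ref{core:eigenvalues of D2n}. For the odd-indexed eigenvalues, Lemma~\ref{thm:-1 0} applies verbatim, since it only needs $4\mid n$. It gives $\eta_h=-1$ if $2q=\frac n2\in D_1$ and $\eta_h=0$ otherwise, that is, $\eta_{2j-1}=-\varepsilon$. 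It also gives the pairing $D_0=2(D_1\setminus\{2q\})$.

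For $\eta_{2j}$ with $1\le j\le q-1$, I will split $D$ into three parts.
\begin{itemize}
\item[(i)] The pairs $\{e,2e\}$ with $e\in D_1\setminus\{2q\}$ and $2e\in D_0$. Here $v_2(\frac ne)=1$ and $v_2(\frac n{2e})=0$, so Lemma~\ref{lem:basic lem 17} gives $c(2j,\frac ne)=c(j,\frac{2q}{e})$ and $c(2j,\frac n{2e})=c(j,\frac{n}{2e})$. Since $\frac n{2e}=\frac{2q}{e}$, the pair contributes $2c(j,\frac{2q}e)$.
\item[(ii)] Each $d\in D_2$. Here $v_2(\frac nd)=2$, and the third case of Lemma~\ref{lem:basic lem 17} gives $2c(j,\frac{2q}{d})$.
\item[(iii)] The possible divisor $2q$, which contributes $c(2j,2)=1$.
\end{itemize}
Summing gives $\eta_{2j}=2\sigma_j+\varepsilon$. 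Unlike Proposition~\ref{pro:D1-D2-8midn}, there is no parity restriction on $j$ here, since Lemma~\ref{lem:basic lem 17} holds for every positive $j$. We also need that every $d\in H$ is a proper divisor of $2q$. This holds because $v_2(\frac nd)\ge1$ for $d\in D_1\cup D_2$, and $d=2q$ has been removed.

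For $|S_0|$, I use $|G_n(d)|=\varphi(\frac nd)$. A pair $\{e,2e\}$ contributes $\varphi(2m)+\varphi(m)=2\varphi(m)$ with $m=\frac{2q}{e}$ odd. A divisor $d\in D_2$ contributes $\varphi(\frac nd)=2\varphi(\frac{2q}d)$, since $\frac{2q}{d}\equiv 2\pmod 4$. The divisor $2q$ contributes $1$. Because $\sigma_0=\sum_{d\in H}\varphi(\frac{2q}d)$, this gives $|S_0|=2\sigma_0+\varepsilon$.

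For $T=\sum_{a^k\in S_0}(-1)^k$, note that $(-1)^k=(-1)^d$ is constant on $G_n(d)$, as in the proof of Lemma~\ref{lem:D3 PST}. So each $d$ contributes $(-1)^d\varphi(\frac nd)$.
\begin{itemize}
\item For $e\in D_1\setminus\{2q\}$, $e$ is even (because $v_2(n)=2$ and $v_2(\frac ne)=1$) and $2e$ is even, so the pair contributes $+2\varphi(\frac{2q}e)$. On the other side, $c(q,\frac{2q}e)=\varphi(\frac{2q}e)$, because $\frac{2q}e$ is odd and divides $q$.
\item For $d\in D_2$, $d$ is odd, so the contribution is $-2\varphi(\frac{2q}{d})$. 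On the other side, $c(q,\frac{2q}{d})=c(q,2)\,c(q,\frac qd)=-\varphi(\frac{2q}d)$ by Lemma~\ref{lem:ramanujan-odd}-(4).
\item The divisor $2q$ is even and contributes $+1$.
\end{itemize}
This gives $T=2\sigma_q+\varepsilon$. The only real care needed is this sign bookkeeping in $T$ and the matching of $\frac n{2e}$ with $\frac{2q}{e}$; everything else is direct substitution.
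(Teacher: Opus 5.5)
Your proposal is correct and follows essentially the same route as the paper: obtain $D_0=2(D_1\setminus\{2q\})$ from Lemma~\ref{thm:-1 0}, split $D$ into the pairs $\{e,2e\}$, the divisors in $D_2$ and the possible divisor $2q$, and evaluate each part's contribution to $\eta_{2j}$, $|S_0|$ and $\sum_{a^k\in S_0}(-1)^k$ via Lemmas~\ref{lem:basic lem 17} and~\ref{lem:ramanujan-odd}. The only cosmetic difference is that you read off $\eta_{2j-1}=-\varepsilon$ directly from Lemma~\ref{thm:-1 0}, whereas the paper recomputes it from the same decomposition; both are valid, and your explicit sign check for $c(q,\frac{2q}{d})$ fills in a step the paper only states.
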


\begin{proof}
	Since \(\Gamma\) admits perfect state transfer, Lemma~\ref{thm:-1 0} gives $D_0=2(D_1\setminus\{2q\})$.
	As \(n=4q\), we have \(D=D_0\cup D_1\cup D_2\), and hence $	D=(D_1\setminus\{2q\})\cup 2(D_1\setminus\{2q\})\cup D_2\cup \varepsilon\{2q\}$.
	
	Let \(1\le j\le q-1\). 
	The pair \(\{e,2e\}\), \(e\in D_1\setminus\{2q\}\), contributes to $\eta_{2j}$ is $c(2j,\frac ne)+c(2j,\frac n{2e})=c(2j,\frac{4q}{e})+c(2j,\frac{2q}{e})=2c(j,\frac{2q}{e})$.
	The divisor \(d\in D_2\) contributes to $\eta_{2j}$ is $c(2j,\frac nd)=c(2j,\frac{4q}{d})=2c(j,\frac{2q}{d})$.
	The possible divisor \(2q\) contributes \(c(2j,2)=1\). 
	Therefore
	\[
	\eta_{2j}
	=
	2\sum_{e\in D_1\setminus\{2q\}}
	c\left(j,\frac{2q}{e}\right)
	+
	2\sum_{d\in D_2}
	c\left(j,\frac{2q}{d}\right)
	+\varepsilon
	=
	2\sigma_j+\varepsilon.
	\]
	
	Next let \(h=2j-1\), where \(1\le j\le q\). For
	\(e\in D_1\setminus\{2q\}\), put \(m=\frac{2q}{e}\). Then \(m\) is odd, and $c(h,2m)+c(h,m)=0$.
	Thus each pair \(\{e,2e\}\) contributes \(0\) to \(\eta_h\). 
	Each
	\(d\in D_2\) contributes \(c(h,\frac{4q}{d})=0\). The possible divisor
	\(2q\) contributes \(c(h,2)=-1\). 
	Hence $\eta_{2j-1}=-\varepsilon$, $1\le j\le q$.
	
	It remains to compute \(|S_0|\) and \(\sum\limits_{a^k\in S_0}(-1)^k\). 
	For \(|S_0|\), each pair \(\{e,2e\}\), where \(e\in D_1\setminus\{2q\}\), contributes $\varphi(\frac{4q}{e})	+\varphi(\frac{2q}{e})=2\varphi(\frac{2q}{e})$,
	each \(d\in D_2\) contribute $\varphi(\frac{4q}{d})	=2\varphi(\frac{2q}{d})$,
	and the possible divisor \(2q\) contributes \(\varphi(2)=1\). 
	Thus
	\[
|S_0|=2\sum_{s\in H}\varphi\left(\frac{2q}{s}\right)+\varepsilon	=2\sigma_0+\varepsilon.
	\]
	
	For \(\sum\limits_{a^k\in S_0}(-1)^k\), each pair \(\{e,2e\}\), where \(e\in D_1\setminus\{2q\}\), contributes \(2\varphi(\frac{2q}{e})\), while each \(d\in D_2\) contributes \(-2\varphi(\frac{2q}{d})\). 
	The possible divisor \(2q\) contributes \(1\). 
	Since $c(q,\frac{2q}{e})=\varphi(\frac{2q}{e})$ and $c(q,\frac{2q}{d})=-\varphi(\frac{2q}{d})$, for \(e\in D_1\setminus\{2q\}\) and \(d\in D_2\), respectively, we obtain $\sum\limits_{a^k\in S_0}(-1)^k=2\sigma_q+\varepsilon$.
\end{proof}

\begin{thm}\label{thm:n4mod8-full-reflection}
	Let \(n\equiv 4\pmod 8\), \(G\cong D_{2n}\), and let
	\(S=S_0\cup S_1\subseteq G\setminus\{1\}\) be conjugation-closed, where $S_0=S\cap\langle a\rangle$, $S_1=S\cap b\langle a\rangle$.
	Assume that $S_1=b\langle a\rangle$.
	Let $P:=\{d:d\mid n,\ v_2\left(\frac nd\right)=2\}\setminus\{\frac n4\}$.
	Then \(\Gamma=\mathrm{Cay}(G,S)\) admits perfect state transfer if and only if there exist \(E\subseteq P\) and \(\xi\in\left\{\frac n4,\frac n2\right\}\) such that, for
	\(D=E\cup 2E\cup 4E\cup\{\xi\}\),
	$S_0=\bigcup\limits_{d\in D}G_n(d)$.
\end{thm}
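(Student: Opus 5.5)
We write $n=4q$ with $q$ odd and $\alpha=\beta=1$. The argument runs through Lemma~\ref{lem:n4mod8-common-computation} together with the criterion of Lemma~\ref{lem:iff of PST}(2).

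\emph{Reflection eigenvalues.} With $S_1=b\langle a\rangle$, Corollary~\ref{core:eigenvalues of D2n} gives
\[
\lambda_1=|S_0|+n,\qquad \lambda_2=|S_0|-n,\qquad \lambda_3=\lambda_4=T,
\]
where $T=\sum_{a^k\in S_0}(-1)^k$. Hence $\lambda_1-\lambda_2=2n$, so $v_2(\lambda_1-\lambda_2)=3$. Lemma~\ref{lem:n4mod8-common-computation} gives $\lambda_1-\lambda_3=2(\sigma_0-\sigma_q)+4q$.

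\emph{Necessity: first steps.} Assume PST. Lemma~\ref{lem:n4mod8-common-computation} gives $\eta_{2j-1}=-\varepsilon$ and $\eta_{2j}=2\sigma_j+\varepsilon$.
\begin{itemize}
\item Since $v_2(\eta_2-\eta_1)=\gamma$ and $\eta_2-\eta_1=2(\sigma_1+\varepsilon)$, we get $\gamma\ge 1$.
\item Lemma~\ref{lem:basic lem16} gives $v_2(\sigma_j+\varepsilon)=\gamma-1$ for all $1\le j\le q-1$.
\item The condition $v_2(\lambda_1-\eta_1)=\gamma$ reads $v_2(2\sigma_0+2\varepsilon+4q)=\gamma$, i.e.\ $v_2(\sigma_0+\varepsilon+2q)=\gamma-1$.
\end{itemize}

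\emph{Necessity: the key step.} We need to show $\gamma=1$, so that every $\sigma_j+\varepsilon$ is odd for $1\le j\le q-1$, and hence also for $j$ odd with $q<j<2q$ by the symmetry $\sigma_{2q-j}=\sigma_j$. I would argue via the structure of $H$ as in Proposition~\ref{pro:D1-D2-8midn}: write $H$ as the part $D_1\setminus\{2q\}$ (odd quotient) plus $D_2$ (quotient $\equiv 2\pmod 4$).
\begin{itemize}
\item Apply Lemma~\ref{lem:basic lem12+13} to $\mathrm{ICG}_{2q}(H)$, with $2q\equiv 2\pmod 4$, on the odd-indexed $\sigma_j$. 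These all have a common parity, because all $\sigma_j+\varepsilon$ share the valuation $\gamma-1$.
\item Either branch of Lemma~\ref{lem:basic lem12+13} forces $H$ to have the pattern $D_1\setminus\{2q\}=2(D_2\setminus\{q\})$, with the bookkeeping for the possible divisor $q=\frac n4$ handled by the adjoining trick used in Proposition~\ref{pro:D1-D2-8midn}.
\item Then $D=E\cup 2E\cup 4E\cup\delta\{q\}\cup\varepsilon\{2q\}$ with $E=D_2\setminus\{q\}$.
\item Compute as in Proposition~\ref{pro:structure-8midn}: $\eta_2=\varepsilon-2\delta$ and $\eta_1=-\varepsilon$. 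Thus $\eta_2-\eta_1=2(\varepsilon-\delta)\ne0$, giving $\varepsilon\ne\delta$ and $\gamma=1$.
\end{itemize}
This yields exactly $D=E\cup2E\cup4E\cup\{\xi\}$.

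\emph{Sufficiency.} For $D$ of this form, recompute as in the sufficiency part of Theorem~\ref{thm:8midn-characterization}:
\begin{itemize}
\item Each triple $\{e,2e,4e\}$ contributes $0$ to odd-indexed $\eta_h$ and a multiple of $4$ to even-indexed $\eta_h$, using $c(2j,4m)+c(2j,2m)+c(2j,m)=2(1+(-1)^j)c(2j,m)$.
\item The element $\xi$ gives $\eta_{2j-1}\in\{0,-1\}$ and $\eta_{2j}\equiv 2$ or $1\pmod 4$ respectively. Hence $v_2(\eta_{h+1}-\eta_h)=1$.
\item Each triple contributes $4\varphi(m_e)$ to $|S_0|$. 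Now $e$ is odd here, since $4e\mid n$ with $n/(4e)$ odd. So the contribution to $T$ is $\varphi(4m_e)\cdot(\pm1)+\varphi(2m_e)+\varphi(m_e)$, with the signs determined by parity. Concretely $e$ odd gives $-2\varphi(m_e)$, $2e$ gives $+\varphi(m_e)$, and $4e$ gives $+\varphi(m_e)$, for a total of $0$.
\item So $|S_0|\equiv T\equiv 2$ or $1\pmod 4$ according to $\xi$ (the element $\xi$ has even exponents).
\item Then $\lambda_1-\lambda_3=|S_0|-T+4q\equiv 0\pmod 4$ and $\lambda_1-\lambda_2=8q$, so both have valuation $>1$.
\item Finally $\lambda_1-\eta_1\equiv 2+4q$ or $1+4q+1$, which is $\equiv 2\pmod 4$ in both cases.
\end{itemize}
Thus all conditions of Lemma~\ref{lem:iff of PST}(2) hold with $\gamma=1$.

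The main obstacle is the necessity step forcing $\gamma=1$ and the triple structure. The conditions $v_2(\lambda_1-\lambda_3)>\gamma$ may be needed if Lemma~\ref{lem:basic lem12+13} alone leaves $\gamma\ge2$ possible, in which case I would exclude $\gamma\ge 2$ through the $\eta_2-\eta_1$ computation as above.
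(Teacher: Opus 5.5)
There is a genuine gap where you apply Lemma~\ref{lem:basic lem12+13} to $\mathrm{ICG}_{2q}(H)$. That lemma needs \emph{every} odd-position eigenvalue $\sigma_j$, $j\in\{1,3,\dots,2q-1\}$, to have the same parity. Because $q$ is odd, this set contains the middle index $j=q$.

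Your data cover only $1\le j\le q-1$ (via Lemma~\ref{lem:basic lem16}) and $q<j<2q$ (via $\sigma_{2q-j}=\sigma_j$). No $\eta_{2j}$ in range corresponds to $\sigma_q$, so your claim that all odd-indexed $\sigma_j+\varepsilon$ share the valuation $\gamma-1$ is unproved exactly there. This is what separates the present case from Proposition~\ref{pro:D1-D2-8midn}: there $\frac n4$ is even, so $\mathrm{ICG}_{n/2}(E)$ has no odd middle index.

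The $\eta$-conditions alone cannot close the gap. Take $n=12$, $D=\{1\}$, $S_1=b\langle a\rangle$. Then $\eta=(0,2,0,-2,0)$, so all $\eta$-conditions hold with $\gamma=1$, and $\sigma_1,\sigma_2$ are odd. Yet $\sigma_3=c(3,6)=-2$ is even. This $D$ is excluded only because $v_2(\lambda_1-\eta_1)=v_2(16)\neq 1$. More generally, every divisor set in Theorem~\ref{thm:n4mod8-half-reflection} satisfies all the $\eta$-conditions. So some reflection-eigenvalue condition must enter the necessity proof at this point.

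The paper's fix uses two facts you already wrote down:
\begin{itemize}
\item[(i)] $v_2(\sigma_0+\varepsilon+2q)=\gamma-1$;
\item[(ii)] $\lambda_1-\lambda_3=2(\sigma_0-\sigma_q)+4q$.
\end{itemize}
From (ii), $v_2(\lambda_1-\lambda_3)>\gamma$ gives $v_2(\sigma_0-\sigma_q+2q)>\gamma-1$. Subtracting this from (i) yields $v_2(\sigma_q+\varepsilon)=\gamma-1$. Alternatively, $\sigma_0-\sigma_q=2\sum_{d\in D_2}\varphi(\frac{2q}{d})$ is even, so the parity of $\sigma_q+\varepsilon$ already follows from (i).

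So the role of the $\lambda_3$-condition is to control $\sigma_q$, not to exclude $\gamma\ge 2$ as your closing remark suggests. Once $H$ is known, your $\eta_2-\eta_1$ computation gives $\varepsilon\ne\delta$ and $\gamma=1$, as in the paper.

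Two smaller points:
\begin{itemize}
\item When $q=1$ ($n=4$) there is no $\eta_2$ and no $j$ with $1\le j\le q-1$, so your argument yields nothing. This case must be checked separately; the paper does it by a table. There $D=\emptyset$ and $D=\{1,2\}$ are excluded only by the conditions $v_2(\lambda_1-\lambda_i)>\gamma$, $i=2,3,4$.
\item In the sufficiency part, $\xi=q$ is odd, so $G_n(q)$ contributes $-2$, not $2$, to $T$. Your congruence $T\equiv 2\pmod 4$ still holds because $-2\equiv 2\pmod 4$.
\end{itemize}
With these repairs, the rest of your argument coincides with the paper's.
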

\begin{proof}
	Put $q=\frac n4$.	
	Then \(q\) is odd and \(\frac n2=2q\).
	
	If \(q=1\), then \(n=4\) and \(P=\emptyset\). 
	Since \(S_1= b\langle a\rangle\), the possible divisor sets are \(D\subseteq\{1,2\}\). 
	A direct calculation from Corollary \ref{core:eigenvalues of D2n} and Lemma~\ref{lem:iff of PST} gives
	\[
	\begin{array}{c|c|c|c}
		D & \eta_1 & (\lambda_1,\lambda_2,\lambda_3,\lambda_4)
		& \text{PST}\\
		\hline
		\emptyset & 0 & (4,-4,0,0) & \text{no}\\
		\{1\} & 0 & (6,-2,-2,-2) & \text{yes}\\
		\{2\} & -1 & (5,-3,1,1) & \text{yes}\\
		\{1,2\} & -1 & (7,-1,-1,-1) & \text{no}.
	\end{array}
	\]
	Thus \(\Gamma\) admits perfect state transfer if and only if
	\(D=\{1\}\) or \(D=\{2\}\). This is exactly the stated form, since
	\(P=\emptyset\) and \(\xi\in\{1,2\}\). Hence the assertion holds when
	\(q=1\). In the rest of the proof, assume that \(q>1\).

	\medskip
	\noindent\emph{Necessity.}
	Assume that \(\Gamma\) admits perfect state transfer. Then \(\Gamma\) is
	integral, and hence there exists a set \(D\) of proper divisors of \(n\) such 	that $S_0=\bigcup\limits_{d\in D}G_n(d)$.
	Since \(S_1=b\langle a\rangle\), we may write $\alpha=\beta=1$.
	By \(n=4q\) and Lemma~\ref{thm:-1 0}, we have \(D=D_0\cup D_1\cup D_2=2(D_1\setminus\{2q\})\cup D_1\cup D_2\). 
	Set $H:=(D_1\setminus\{2q\})\cup D_2$, and let $\sigma_j=\sum\limits_{d\in H}c(j,\frac{2q}{d})$, $0\le j\le 2q-1$, be the eigenvalues of \(\mathrm{ICG}_{2q}(H)\). 
	Let $\varepsilon=1$ if $2q\in D_1$ and $\varepsilon=0$ if $2q\notin D_1$.
	
	We prove that \(D\) has the required form in two steps. 
	First, we show that all odd-position eigenvalues of \(\mathrm{ICG}_{2q}(H)\) have the same parity.
	This determines the possible structure of \(H\). 
	Second, we use the condition on \(\eta_2-\eta_1\) to decide which one of the two special divisors \(q\) and \(2q\) occurs.
	
	By Lemma \ref{lem:n4mod8-common-computation}, for odd \(j\) with \(1\le j\le q-1\), $\eta_{2j}=2\sigma_j+\varepsilon$.
    By Lemma \ref{thm:-1 0}, $\eta_{2j-1}=-\varepsilon$.
    Thus Lemma~\ref{lem:iff of PST} implies $v_2(\eta_{2j}-\eta_{2j-1})=v_2(2(\sigma_j+\varepsilon))=\gamma$, and hence $v_2(\sigma_j+\varepsilon)=\gamma-1$.
    
    Now let $A=|S_0|$, $T=\sum\limits_{a^k\in S_0}(-1)^k$.
	By Lemma \ref{lem:n4mod8-common-computation}, $A=2\sigma_0+\varepsilon$, and $T=2\sigma_q+\varepsilon$.
	By Corollary~\ref{core:eigenvalues of D2n}, $\lambda_1=2\sigma_0+\varepsilon+4q$, and $\lambda_3=\lambda_4=2\sigma_q+\varepsilon$.	
	Also Lemma \ref{thm:-1 0} gives $\eta_1=-\varepsilon$.
	Thus Lemma~\ref{lem:iff of PST} gives $v_2(\lambda_1-\eta_1)=v_2(\lambda_1+\varepsilon)=\gamma$.
	Equivalently, $v_2(\sigma_0+\varepsilon+2q)=\gamma-1$.
	Moreover, from \(v_2(\lambda_1-\lambda_3)>\gamma\), we get $v_2(\sigma_0-\sigma_q+2q)>\gamma-1$.
	Therefore the difference of these two numbers has \(2\)-adic valuation \(\gamma-1\), and hence $v_2(\sigma_q+\varepsilon)=\gamma-1$.
		Combining this with $v_2(\sigma_j+\varepsilon)=\gamma-1$, where $1\le j\le q-1$ is odd, we see that \(\sigma_j+\varepsilon\) has the same parity for every odd
	\(j\) with \(1\le j\le q\). 
	
	Since \(\mathrm{ICG}_{2q}(H)\) is undirected, \(\sigma_{2q-j}=\sigma_j\). Hence all
	odd-position eigenvalues of \(\mathrm{ICG}_{2q}(H)\) have the same parity.
	By Lemma~\ref{lem:basic lem12+13}, either $H=D_2\cup 2D_2$, or $H=(D_2\setminus\{q\})\cup 2(D_2\setminus\{q\})\cup\{q\}$.
	
	If \(H=D_2\cup 2D_2\), then \(q\notin D_2\).
	Comparing this with $H=(D_1\setminus\{2q\})\cup D_2$ gives $D_1\setminus\{2q\}=2D_2$, $D_0=4D_2$.	 
	The contributions of each pair \(\{d,2d\}\), \(d\in D_2\), to \(\sigma_1\) cancel, and so \(\sigma_1=0\).
	Since \(\eta_2=2\sigma_1+\varepsilon\) and \(\eta_1=-\varepsilon\), we get $\eta_2-\eta_1=2\varepsilon$.
	Since \(v_2(\eta_2-\eta_1)=\gamma\) is finite, we have  \(\varepsilon=1\). 
	Hence $D=D_0\cup D_1\cup D_2=D_2\cup 2D_2\cup 4D_2\cup\{2q\}$.
	This has the required form, with $E=D_2$, $\xi=2q=\frac n2$.
	
	If $H=(D_2\setminus\{q\})\cup 2(D_2\setminus\{q\})\cup\{q\}$, then \(q\in D_2\).
	The contributions of each pair \(\{d,2d\}\), \(d\in D_2\setminus\{q\}\), to
	 \(\sigma_1\) cancel, while the divisor \(q\) contributes \(c(1,2)=-1\).
	 Thus \(\sigma_1=-1\).
	 Since \(\eta_2=2\sigma_1+\varepsilon\) and $\eta_1=-\varepsilon$, we get $\eta_2-\eta_1=2(\varepsilon-1)$.
	 Since \(v_2(\eta_2-\eta_1)=\gamma\) is finite, we have \(\varepsilon=0\).
	Hence $D=(D_2\setminus\{q\})\cup 2(D_2\setminus\{q\})\cup 4(D_2\setminus\{q\})\cup\{q\}$.
	This has the required form, with $E=D_2\setminus\{q\}$, $\xi=q=\frac n4$.
	
	\medskip
	\noindent\emph{Sufficiency.}
	Conversely, assume that the condition in the statement holds. 
	Since \(S_1=b\langle a\rangle\), we have $\alpha=\beta=1$.
	We verify Lemma~\ref{lem:iff of PST} with \(\gamma=1\).
	
For each \(e\in E\), put \(m_e=\frac qe\). 
Then $\frac n e=4m_e$, $\frac n{2e}=2m_e$, $\frac n{4e}=m_e$.
The same calculation as in the sufficiency part of Theorem~\ref{thm:8midn-characterization} shows that, for $1\le j\le q$, each triple \(\{e,2e,4e\}\), \(e\in E\), contributes \(0\) to \(\eta_{2j-1}\), and that, for $1\le j\le q-1$, it contributes a multiple of \(4\) to \(\eta_{2j}\).
For \(\xi\), we have \(c(2j-1,4)=0\) and \(c(2j,4)\equiv 2\pmod 4\) when \(\xi=q\), while \(c(2j-1,2)=-1\) and \(c(2j,2)=1\) when \(\xi=2q\). 
Thus
\[
\eta_{2j-1}=
\begin{cases}
	0, & \xi=q,\\
	-1, & \xi=2q,
\end{cases}
\quad 1\le j\le q,
\quad \text{and}\quad
\eta_{2j}\equiv
\begin{cases}
	2, & \xi=q,\\
	1, & \xi=2q
\end{cases}
\pmod 4,
\quad 1\le j\le q-1.
\]
Therefore \(v_2(\eta_{h+1}-\eta_h)=1\), \(1\le h\le 2q-2\).

We next check the conditions involving \(\lambda_1,\lambda_2,\lambda_3,\lambda_4\).
Let $A=|S_0|$, $T=\sum\limits_{a^k\in S_0}(-1)^k$.
As in the proof of Theorem~\ref{thm:8midn-characterization}, each triple
\(\{e,2e,4e\}\), \(e\in E\), contributes a multiple of \(4\) to both
\(A\) and \(T\). Therefore, modulo \(4\), the values of \(A\) and \(T\)
are determined by the contribution of \(\xi\). 
Hence
\[
A\equiv T\equiv
\begin{cases}
	2, & \xi=q,\\
	1, & \xi=2q
\end{cases}
\pmod 4.
\]
Since \(S_1=b\langle a\rangle\), the contributions from $S_1$ to \(\lambda_1,\lambda_2,\lambda_3,\lambda_4\) are \(4q,-4q,0,0\), all of which are divisible by \(4\). 
Therefore $\lambda_1\equiv\lambda_2\equiv\lambda_3\equiv\lambda_4\pmod 4$.
Thus \(v_2(\lambda_1-\lambda_i)>1\), \(i=2,3,4\). Moreover, from the above
values of \(\eta_1\), we have $\lambda_1-\eta_1\equiv 2\pmod 4$.
Hence \(v_2(\lambda_1-\eta_1)=1\).
Thus all conditions in Lemma~\ref{lem:iff of PST} hold with \(\gamma=1\).
Therefore \(\Gamma\) admits perfect state transfer.
\end{proof}

\begin{thm}\label{thm:n4mod8-half-reflection}
	Let \(n\equiv 4\pmod 8\), \(G\cong D_{2n}\), and let
	\(S=S_0\cup S_1\subseteq G\setminus\{1\}\) be conjugation-closed, where $S_0=S\cap\langle a\rangle$, $S_1=S\cap b\langle a\rangle$.
	Assume that $S_1\in\{b\langle a^2\rangle,ba\langle a^2\rangle\}$ and  \(\Gamma=\mathrm{Cay}(G,S)\) is connected.
	Let $P:=\{d:d\mid n,\ v_2(\frac nd)=2\}\setminus\{\frac n4\}$.
	Then \(\Gamma\) admits perfect state transfer if and only if
	there exist \(E\subseteq P\) and \(\rho\in\{0,1\}\) with \(E\ne\emptyset\) or \(\rho=1\) such that, for $D=E\cup 2(P\setminus E)\cup 4(P\setminus E)
	\cup \rho\{\frac n4,\frac n2\}$, $S_0=\bigcup\limits_{d\in D}G_n(d)$.
\end{thm}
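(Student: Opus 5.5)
The plan is to follow the proof of Theorem~\ref{thm:n4mod8-full-reflection}. Put $q=\frac n4$, which is odd, and write $S_1=\alpha b\langle a^2\rangle\cup\beta ba\langle a^2\rangle$ with $\alpha\ne\beta$. The case $q=1$ will be settled by a direct table as before. For $D\subseteq\{1,2\}$ we must have $1\in D$ for connectedness, and the statement then predicts exactly $D=\{1\}$ (with $E=\{1\}$ is impossible since $P=\emptyset$) or $D=\{1,2\}$ ($\rho=1$). I will check these against Lemma~\ref{lem:iff of PST} by hand. From now on let $q>1$.

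\emph{Necessity.} First I translate the conditions. By Lemma~\ref{lem:n4mod8-common-computation}, with $H=(D_1\setminus\{2q\})\cup D_2$ and $\sigma_j,\varepsilon$ as there, we have
\[
\eta_{2j-1}=-\varepsilon,\quad \eta_{2j}=2\sigma_j+\varepsilon,\quad A=2\sigma_0+\varepsilon,\quad T=2\sigma_q+\varepsilon .
\]
Corollary~\ref{core:eigenvalues of D2n} then gives $\lambda_1=A+2q$, $\lambda_2=A-2q$, and $\{\lambda_3,\lambda_4\}=\{T+2q,\,T-2q\}$. The condition $v_2(\lambda_1-\lambda_2)=v_2(4q)=2>\gamma$ forces $\gamma\le1$. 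The condition $v_2(\lambda_1-\eta_1)=v_2(2(\sigma_0+q+\varepsilon))=\gamma$ forces $\gamma\ge1$. Hence $\gamma=1$, and this is the key difference from the full-reflection case.

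Next I read off parities. From $\gamma=1$:
\begin{itemize}
\item[(a)] $\sigma_0+q+\varepsilon$ is odd;
\item[(b)] $v_2(\eta_{2j}-\eta_{2j-1})=1$ gives $\sigma_j+\varepsilon$ odd for odd $j$ with $1\le j\le q-1$ (the difference $\eta_{2j+1}-\eta_{2j}$ gives the same condition);
\item[(c)] since $\lambda_1-\lambda_3$ and $\lambda_1-\lambda_4$ are $2(\sigma_0-\sigma_q)$ and $2(\sigma_0-\sigma_q)+4q$, the condition $v_2>1$ for both gives $\sigma_0\equiv\sigma_q\pmod 2$.
\end{itemize}
Combining (a) and (c) with $q$ odd, $\sigma_q+\varepsilon$ is even. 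So the odd-position eigenvalues of $\mathrm{ICG}_{2q}(H)$ satisfy $\sigma_j\equiv1+\varepsilon$ for $j\ne q$ and $\sigma_q\equiv\varepsilon\pmod 2$.

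I then reduce to order $q$. For odd $j$ and odd $m$, Lemma~\ref{lem:ramanujan-odd} gives $c(j,2m)=-c(j,m)\equiv c(j,m)\pmod 2$. Write $H\ni d$ as $d=2e$ with $e\mid q$ (when $d\in D_1\setminus\{2q\}$) or $d=e$ with $e\mid q$ (when $d\in D_2$); in both cases $c(j,\frac{2q}{d})\equiv c(j,\frac qe)\pmod2$. So $\sigma_j\equiv\tau_j\pmod 2$, where $\tau$ are the eigenvalues of $\mathrm{ICG}_q(F')$ and $F'$ is the symmetric difference of $\{e:2e\in D_1\setminus\{2q\}\}$ and $\{e:e\in D_2\}$. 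As $j$ runs over the odd numbers $1,\dots,2q-1$, its residue mod $q$ runs over all residues, with $j=q\mapsto0$, and $c(j,\cdot)$ depends only on $j\bmod q$. Hence $\tau_j\equiv1+\varepsilon$ for $j\not\equiv0$ and $\tau_0\equiv\varepsilon\pmod 2$, where $\tau_0=\sum_{e\in F'}\varphi(q/e)\equiv[q\in F']$.

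The main obstacle is turning these parity constraints into the exact set. If $\varepsilon=1$, Lemma~\ref{lem:auxiliary-Km} applied to $F'\setminus\{q\}$ shows that $F'\setminus\{q\}$ consists of all proper divisors of $q$, and $\tau_0$ odd gives $q\in F'$, i.e.\ $q\in D_2$. If $\varepsilon=0$, all $\tau_j$ are even; I would rerun the Möbius-inversion parity argument of Lemma~\ref{lem:auxiliary-Km} with every $\alpha_r$ even, which gives $F'\setminus\{q\}=\{\text{all proper divisors of } q\}$ again, or apply the lemma to $F'$ after adding the divisor $q$, which shifts parity as in Proposition~\ref{pro:D1-D2-8midn}. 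Either way, for each proper divisor $e$ of $q$ exactly one of $e\in D_2$ and $2e\in D_1$ holds, and $q\in D_2\iff\varepsilon=1$. Setting $E=D_2\setminus\{q\}\subseteq P$ and using $D_0=2(D_1\setminus\{2q\})$ from Lemma~\ref{thm:-1 0}, we get $D=E\cup2(P\setminus E)\cup4(P\setminus E)\cup\rho\{q,2q\}$ with $\rho=\varepsilon$. Connectedness requires an odd divisor in $D$; the odd elements of $D$ are exactly $E\cup\rho\{q\}$, which gives $E\ne\emptyset$ or $\rho=1$.

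\emph{Sufficiency.} I reverse the computation. For the given $D$, compute $H$ and $\varepsilon=\rho$, check that $\tau$ matches $K_q$ (with or without the divisor $q$) so the parity conditions above hold, and then verify every condition of Lemma~\ref{lem:iff of PST} with $\gamma=1$:
\begin{itemize}
\item[(i)] $\eta_{2j-1}=-\rho$ for all odd-indexed terms, while each $\eta_{2j}=2\sigma_j+\rho$ has $\sigma_j+\rho$ odd, so $v_2(\eta_{h+1}-\eta_h)=1$ for all $h$;
\item[(ii)] $v_2(\lambda_1-\eta_1)=1$ by (a);
\item[(iii)] $v_2(\lambda_1-\lambda_2)=2>1$;
\item[(iv)] $v_2(\lambda_1-\lambda_{3,4})>1$, which follows from $\sigma_0\equiv\sigma_q\pmod 2$.
\end{itemize}
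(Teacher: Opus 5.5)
Your route is essentially the paper's: Lemma~\ref{lem:n4mod8-common-computation}, then $\gamma=1$ forced by $v_2(\lambda_1-\lambda_2)=v_2(4q)=2$, then the parity of the odd-position eigenvalues of $\mathrm{ICG}_{2q}(H)$, then reduction to Lemma~\ref{lem:auxiliary-Km} at order $q$. You let the odd $j\in\{1,\dots,2q-1\}$ sweep all residues mod $q$ (with $j=q\mapsto 0$) and read off $\delta=\varepsilon$ from $\tau_0\equiv[q\in F']$. This replaces two separate steps in the paper: the computation of $\sigma_q \bmod 2$, and the passage from odd $j$ to all $j$ via the odd part of $j$. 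However, the final case split is wrong as written.

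\textbf{The $\varepsilon=0$ branch.} You assert that all $\tau_j$ are even. Your own congruences give $\tau_j\equiv 1+\varepsilon=1$ for $q\nmid j$ and $\tau_0\equiv 0$, so this premise is false. The repair you propose for the ``all even'' situation also does not give the claimed conclusion. If every $\tau_j$ (including $\tau_0$) were even, then in the proof of Lemma~\ref{lem:auxiliary-Km} every $\alpha_r$ would be even. M\"obius inversion would then make the indicator function identically $0 \bmod 2$, i.e.\ $F'=\emptyset$, not $F'\setminus\{q\}=P$. Nor can the lemma be applied to $F'\cup\{q\}$, since $q$ is not a proper divisor of $q$; running its parity argument there again yields $F'=\emptyset$.

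\textbf{The fix.} Since $\tau_0\equiv[q\in F']\equiv\varepsilon$, you have $q\in F'\iff\varepsilon=1$.
\begin{itemize}
\item If $\varepsilon=0$, then $q\notin F'$ and every $\tau_j$ with $q\nmid j$ is odd. Lemma~\ref{lem:auxiliary-Km} applies directly to $F'$ and gives $F'=P$.
\item If $\varepsilon=1$, you must first get $q\in F'$ from $\tau_0$ odd. Only then does deleting $q$, which subtracts $c(j,1)=1$, make the nontrivial eigenvalues of $\mathrm{ICG}_q(F'\setminus\{q\})$ odd so the lemma applies. Your text states these two facts in the reverse order.
\end{itemize}

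\textbf{Smaller points.}
\begin{itemize}
\item For $q=1$ the statement predicts only $D=\{1,2\}$. You must show $D=\{1\}$ fails; there $v_2(\lambda_1-\eta_1)=v_2(\lambda_1-\lambda_2)=2$.
\item In the sufficiency, Lemma~\ref{lem:n4mod8-common-computation} is stated under a PST hypothesis. Its proof only uses $D_0=2(D_1\setminus\{2q\})$, which holds for the given $D$, so say so.
\item Also in the sufficiency, you need $\sigma_j+\rho$ odd for every $1\le j\le q-1$, not just odd $j$.
\item You need $\sigma_0\equiv\sigma_q\equiv\rho$ verified directly rather than ``by (a)'', since (a) was derived from PST.
\item Both of the last two follow from $c(j,2m)=c(j,2)c(j,m)\equiv c(j,m)\pmod 2$, from $\sum_{p\in P}c(j,q/p)=-1$ for $q\nmid j$, and from $\varphi(q/p)$ being even for $p\in P$.
\end{itemize}
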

\begin{proof}
	Put \(q=\frac n4\). 
	Then \(q\) is odd, \(\frac n2=2q\), and \(P\) is the set
	of proper divisors of \(q\).
	
	If \(q=1\), then \(n=4\) and \(P=\emptyset\). Since
	\(S_1\in\{b\langle a^2\rangle,ba\langle a^2\rangle\}\), write
	\(\alpha+\beta=1\) and put \(s=\alpha-\beta\in\{1,-1\}\). The possible
	divisor sets are \(D\subseteq\{1,2\}\). 
A direct calculation from Corollary \ref{core:eigenvalues of D2n} and Lemma~\ref{lem:iff of PST} gives
	\[
	\begin{array}{c|c|c|c}
		D & \eta_1 & (\lambda_1,\lambda_2,\lambda_3,\lambda_4)
		& \text{PST}\\
		\hline
		\emptyset & 0 & (2,-2,2s,-2s) & \text{yes}\\
		\{1\} & 0 & (4,0,-2+2s,-2-2s) & \text{no}\\
		\{2\} & -1 & (3,-1,1+2s,1-2s) & \text{no}\\
		\{1,2\} & -1 & (5,1,-1+2s,-1-2s) & \text{yes}.
	\end{array}
	\]
	If $D=\emptyset$, then $\Gamma$ is not connected.
	Thus \(D=\{1,2\}\). 
	This is exactly the stated form since \(P=\emptyset\) and \(\rho=1\). 
	Hence the assertion holds when \(q=1\). 
	In the rest of the proof, assume that \(q>1\).

	\medskip
	\noindent\emph{Necessity.}
	Assume that \(\Gamma\) admits perfect state transfer. 
	Then \(\Gamma\) is integral, and hence there exists a set \(D\) of proper divisors of \(n\) such that $S_0=\bigcup\limits_{d\in D}G_n(d)$.
	
	We determine \(D\) in two steps. 
	First, we prove that the two special divisors \(q\) and \(2q\) 	either both occur or both do not occur. 
	Second, we prove that, for each \(p\in P\), exactly one of \(p\in D_2\) and \(2p\in D_1\) holds. 
	These two facts will give the stated description of	\(D\).

	Since \(S_1\in\{b\langle a^2\rangle,ba\langle a^2\rangle\}\), we may write $S_1=\alpha b\langle a^2\rangle\cup \beta ba\langle a^2\rangle$, $\alpha,\beta\in\{0,1\}$, where \(\alpha\ne\beta\). 
	Thus \(\alpha+\beta=1\).
	By \(n=4q\) and Lemma~\ref{thm:-1 0}, we have \(D=D_0\cup D_1\cup D_2=2(D_1\setminus\{2q\})\cup D_1\cup D_2\). 
	Set $H=(D_1\setminus\{2q\})\cup D_2$, and let $\sigma_j=\sum\limits_{d\in H}c(j,\frac{2q}{d})$, $0\le j\le 2q-1$, be the eigenvalues of \(\mathrm{ICG}_{2q}(H)\). 
	Also let 
	\[
	\varepsilon=
	\begin{cases}
		1, & 2q\in D_1,\\
		0, & 2q\notin D_1,
	\end{cases}
	\quad \text{and}\quad
	\delta=
	\begin{cases}
		1, & q\in D_2,\\
		0, & q\notin D_2.
	\end{cases}
	\]
	
	By Lemma \ref{lem:n4mod8-common-computation}, for every odd $j$ with \(1\le j\le q-1\), $\eta_{2j}=2\sigma_j+\varepsilon$.
	Moreover, Lemma~\ref{thm:-1 0} gives $\eta_{2j-1}=-\varepsilon$.
	Thus Lemma~\ref{lem:iff of PST} gives $v_2(\eta_{2j}-\eta_{2j-1})=v_2(2(\sigma_j+\varepsilon))=\gamma$,
	and therefore $v_2(\sigma_j+\varepsilon)=\gamma-1$.
	
	Now let $A=|S_0|$, $T=\sum\limits_{a^k\in S_0}(-1)^k$.
	By Lemma \ref{lem:n4mod8-common-computation}, $A=2\sigma_0+\varepsilon$ and $T=2\sigma_q+\varepsilon$.
	By Corollary~\ref{core:eigenvalues of D2n}, $\lambda_1=2\sigma_0+\varepsilon+2q(\alpha+\beta)$, and $\lambda_3=2\sigma_q+\varepsilon+2q(\alpha-\beta)$.
	By Lemma \ref{thm:-1 0}, we have $\eta_1=-\varepsilon$.
	Hence Lemma~\ref{lem:iff of PST} gives $v_2(\lambda_1-\eta_1)=v_2(\lambda_1+\varepsilon)=\gamma$.
	Equivalently, $v_2(\sigma_0+\varepsilon+q(\alpha+\beta))=\gamma-1$.
	Also, from \(v_2(\lambda_1-\lambda_3)>\gamma\), we obtain $v_2(\sigma_0-\sigma_q+2q\beta)>\gamma-1$.	
	Taking the difference of the two numbers above gives $v_2(\sigma_q+\varepsilon+q(\alpha-\beta))=\gamma-1$.
		
	Since \(\alpha+\beta=1\), we have $\lambda_1-\lambda_2=n=4q$.
	By Lemma~\ref{lem:iff of PST}, \(v_2(\lambda_1-\lambda_2)>\gamma\), and hence \(\gamma<2\). 
	On the other hand, \(\lambda_1-\eta_1=\lambda_1+\varepsilon\) is even, so \(\gamma\ge 1\). 
	Therefore $\gamma=1$.
	Thus \(\sigma_j+\varepsilon\) is odd for every odd \(j\) with \(1\le j\le q-1\). Since \(q\) is odd and \(\alpha-\beta=\pm1\), the relation $v_2(\sigma_q+\varepsilon+q(\alpha-\beta))=0$ implies that \(\sigma_q+\varepsilon\) is even. 
	Hence \(\sigma_q\) has the opposite parity to those \(\sigma_j\)'s.
	
	We now determine the relation between the two divisors \(q\) and \(2q\).
	Recall that $H=(D_1\setminus\{2q\})\cup D_2$.
	The possible divisor \(q\in D_2\) will be treated separately. 
	Every other element of \(D_2\) lies in \(P\), and every element of \(D_1\setminus\{2q\}\) has the form \(2p\) with \(p\in P\).
	
	For \(p\in P\), put \(m_p=\frac qp\). 
	Then \(m_p>1\) is odd. 
	If \(p\in D_2\), its contribution to \(\sigma_q\) is $c(q,\frac{2q}{p})	=c(q,2m_p)=-\varphi(m_p)$, which is even. 
	If \(2p\in D_1\), its contribution is $c(q,\frac{2q}{2p})=c(q,m_p)=\varphi(m_p)$, which is also even. 
	Thus every contribution to \(\sigma_q\) other than the possible one from \(q\in D_2\) is even. 
	The divisor \(q\), if it occurs, contributes \(c(q,2)=-1\). 
	Hence $\sigma_q\equiv \delta\pmod 2$.
	Since \(\sigma_q+\varepsilon\) is even, it follows that $\delta=\varepsilon$.
	Thus \(q\in D_2\) if and only if \(2q\in D_1\).
	
	It remains to prove the corresponding statement for all divisors in \(P\).
	Define $B:=\{p\in P:\text{ exactly one of }p\in D_2\text{ and }2p\in D_1
	\text{ holds}\}$.
	We shall prove that \(B=P\).
	
	For \(1\le j\le q-1\), put $\widetilde{\sigma}_j:=\sigma_j-\delta c(j,2)$.
	Thus \(\widetilde{\sigma}_j\) is obtained from \(\sigma_j\) by removing the
	possible contribution of the divisor \(q\in D_2\). 
	Since \(\delta=\varepsilon\) and \(c(j,2)\) is odd, the fact that \(\sigma_j+\varepsilon\) is odd for odd \(j\) implies that $\widetilde{\sigma}_j\equiv 1\pmod 2$ for every odd \(j\) with \(1\le j\le q-1\).
	
	We compute \(\widetilde{\sigma}_j\) modulo \(2\). 
	After removing the possible divisor \(q\), every remaining element of \(H\) has the form \(p\) or \(2p\) for some \(p\in P\). 
	Fix \(p\in P\), and put \(m_p=\frac qp\). 
	Then \(m_p\) is odd. 
	If \(p\in D_2\), its contribution to \(\widetilde{\sigma}_j\) is $c(j,\frac{2q}{p})=c(j,2m_p)$.
	If \(2p\in D_1\), its contribution is $c(j,\frac{2q}{2p})=c(j,m_p)$.
	Since \(m_p\) is odd, Lemma \ref{lem:ramanujan-odd} gives $c(j,2m_p)=c(j,2)c(j,m_p)\equiv c(j,m_p)\pmod 2$.
	Therefore, modulo \(2\), the pair \(\{p,2p\}\) contributes \(c(j,m_p)\) if exactly one of \(p\in D_2\) and \(2p\in D_1\) holds, and contributes \(0\) otherwise. 
	Hence $\widetilde{\sigma}_j\equiv\sum\limits_{p\in B}c(j,\frac qp)\pmod 2$.
	Thus for every odd \(j\) with \(1\le j\le q-1\), $\sum\limits_{p\in B}c(j,\frac qp)\equiv 1\pmod 2$.
	
	Next we remove the restriction that \(j\) is odd. 
	Let \(1\le j\le q-1\) be arbitrary and write \(j=2^r u\), where \(u\) is odd. 
	Since \(q\) is odd, \(\frac qp\) is odd for every \(p\in P\), and hence $\gcd(j,\frac qp)=\gcd(u,\frac qp)$.
	Therefore $c(j,\frac qp)=c(u,\frac qp)$.
	The congruence above now gives for every \(1\le j\le q-1\), $\sum\limits_{p\in B}c(j,\frac qp)\equiv 1\pmod 2$.
		
	Let $\rho_j:=\sum\limits_{p\in B}c(j,\frac qp)$, $0\le j\le q-1$.
	Then \(\rho_0,\rho_1,\ldots,\rho_{q-1}\) are the eigenvalues of \(\mathrm{ICG}_q(B)\), and every nontrivial eigenvalue \(\rho_j\) is odd. 
	By Lemma~\ref{lem:auxiliary-Km}, we obtain $B=P$.
	Therefore, for every \(p\in P\), exactly one of \(p\in D_2\) and
	\(2p\in D_1\) holds.
	
	We now finish the description of \(D\). 
	Set $E=\{p\in P:p\in D_2\}$.
	Since \(B=P\), we have $D_2\setminus\{q\}=E$, $D_1\setminus\{2q\}=2(P\setminus E)$.
	By \(D_0=2(D_1\setminus\{2q\})\), we get $D_0=4(P\setminus E)$.
	Together with \(\delta=\varepsilon\), this gives $D=E\cup 2(P\setminus E)\cup 4(P\setminus E)\cup \rho\{q,2q\}$, where \(\rho=\delta=\varepsilon\). 
	
	Finally, since \(\Gamma\) is connected, \(\langle S\rangle=G\).
	If \(D\) contains no odd divisor, then every element of \(S_0\) is an even power of \(a\), that is, \(S_0\subseteq \langle a^2\rangle\). 
	Together with \(S_1\in\{b\langle a^2\rangle,ba\langle a^2\rangle\}\), we have $\langle S\rangle\neq G$, a contradiction.
	Hence \(D\) must contain an odd divisor. 
	In the description above, this is equivalent to \(E\ne\emptyset\) or \(\rho=1\). 
	This completes the proof of necessity.
	
	\medskip
	\noindent\emph{Sufficiency.}
	Conversely, assume that the stated condition holds. 
	In the notation \(S_1=\alpha b\langle a^2\rangle\cup \beta ba\langle a^2\rangle\),
	\(\alpha,\beta\in\{0,1\}\), we have \(\alpha+\beta=1\).
	We verify Lemma~\ref{lem:iff of PST} with \(\gamma=1\).
	
	For \(p\in P\), put \(m_p=\frac qp\). 
	Then \(m_p>1\) is odd and $\frac np=4m_p$, $\frac n{2p}=2m_p$, $\frac n{4p}=m_p$.
	Let \(h=2j-1\), where \(1\le j\le q\). 
	If \(p\in E\), then \(c(h,4m_p)=0\). 
	If \(p\in P\setminus E\), then $c(h,2m_p)+c(h,m_p)=0$.
	Hence the divisors outside \(\{q,2q\}\) make no contribution to \(\eta_{2j-1}\). 
	The possible divisors \(q\) and \(2q\) contribute $c(2j-1,4)+c(2j-1,2)=-1$.
	Thus $\eta_{2j-1}=-\rho$, $1\le j\le q$.
	
	Next let \(1\le j\le q-1\). 
	For \(p\in E\), $c(2j,4m_p)\equiv 2c(2j,m_p)\pmod 4$, while for \(p\in P\setminus E\), $c(2j,2m_p)+c(2j,m_p)=2c(2j,m_p)$.
	Therefore the divisors outside \(\{q,2q\}\) contribute $2\sum\limits_{p\in P}c(2j,\frac qp)$ to \(\eta_{2j}\) modulo \(4\). 
	Since \(P\) is the set of all proper divisors of \(q\), this sum is a nontrivial eigenvalue of \(K_q\), and hence equals \(-1\). 
	Thus these divisors contribute \(2\pmod 4\).
	
	The possible divisors \(q\) and \(2q\) contribute $c(2j,4)+c(2j,2)=2(-1)^j+1\equiv 3\pmod 4$.
	Hence $\eta_{2j}\equiv 2+3\rho\pmod 4$, $1\le j\le q-1$.
	Together with \(\eta_{2j-1}=-\rho\), this gives $v_2(\eta_{h+1}-\eta_h)=1,\quad 1\le h\le 2q-2$.
	
	We next check the conditions involving \(\lambda_1,\lambda_2,\lambda_3,\lambda_4\).
	Let $A=|S_0|$, $T=\sum\limits_{a^k\in S_0}(-1)^k$.
	For \(p\in E\), the divisor \(p\) contributes \(2\varphi(m_p)\) to \(A\) and \(-2\varphi(m_p)\) to \(T\). 
	For \(p\in P\setminus E\), the divisors \(2p\) and \(4p\) contribute \(2\varphi(m_p)\) to both \(A\) and \(T\).
	Since \(m_p>1\) is odd, \(\varphi(m_p)\) is even. 
	Hence all these contributions are divisible by \(4\).
	
	The possible divisors \(q\) and \(2q\) contribute \(3\) to \(A\) and \(-1\equiv 3\pmod 4\) to \(T\). Hence $A\equiv T\equiv 3\rho\pmod 4$.
	Since \(\alpha+\beta=1\), and since \(2q\equiv -2q\equiv 2\pmod 4\),
	Corollary~\ref{core:eigenvalues of D2n} gives $\lambda_1\equiv\lambda_2\equiv\lambda_3\equiv\lambda_4
	\equiv 3\rho+2\pmod 4$.
	Thus \(v_2(\lambda_1-\lambda_i)>1\), \(i=2,3,4\). 
	Moreover, from \(\eta_1=-\rho\), we have $\lambda_1-\eta_1\equiv (3\rho+2)-(-\rho)\equiv 2\pmod 4$.
	Hence \(v_2(\lambda_1-\eta_1)=1\). 
	Therefore all conditions in Lemma~\ref{lem:iff of PST} hold with \(\gamma=1\), and so \(\Gamma\) admits perfect state transfer.
\end{proof}

\begin{core}\label{cor:minimal-time-n4mod8}
	Under the hypotheses of Theorems~\ref{thm:n4mod8-full-reflection}
	and~\ref{thm:n4mod8-half-reflection}, whenever \(\Gamma\) admits perfect
	state transfer, the minimum perfect state transfer time is \(\frac{\pi}{2}\).
\end{core}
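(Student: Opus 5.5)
The approach mirrors Corollary~\ref{cor:minimal-time-8midn}. By Lemma~\ref{lem:iff of PST}, the minimum perfect state transfer time is $\frac{\pi}{M}$, where $M=\gcd\{\lambda_1-\lambda:\lambda\ne\lambda_1\}$. It therefore suffices to show $M=2$, which splits into $2\mid M$ and $M\mid 2$.

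For $2\mid M$, I use the sufficiency parts of Theorems~\ref{thm:n4mod8-full-reflection} and~\ref{thm:n4mod8-half-reflection}. There, once perfect state transfer holds, $D$ has the stated form, and all conditions of Lemma~\ref{lem:iff of PST} hold with $\gamma=1$. Concretely, $\lambda_1\equiv\lambda_2\equiv\lambda_3\equiv\lambda_4\pmod 4$. Moreover, all $\eta_h$ have the same parity as $\eta_1$, since consecutive differences are even, and $\lambda_1-\eta_1\equiv 2\pmod 4$. Hence every difference $\lambda_1-\lambda$ is even, and $2\mid M$.

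For $M\mid 2$, I exhibit two eigenvalues different from $\lambda_1$ whose difference is $\pm2$. The natural candidates are $\eta_1$ and $\eta_2$; both exist when $q=\frac n4>1$, since then $\frac n2-1\ge 5$. Connectedness makes $\lambda_1$ a simple largest eigenvalue, so $\eta_1,\eta_2\ne\lambda_1$, and hence $M\mid \eta_2-\eta_1$. The two cases are as follows.
\begin{itemize}
\item In the full-reflection case, $D=E\cup2E\cup4E\cup\{\xi\}$. Each triple $\{e,2e,4e\}$ contributes $-2\mu(m_e)+\mu(m_e)+\mu(m_e)=0$ to $\eta_2$ by Lemma~\ref{lem:ramanujan-odd}, and $0$ to $\eta_1$. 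So only $\xi$ contributes, giving $\eta_2-\eta_1=-2$ if $\xi=q$ and $1-(-1)=2$ if $\xi=2q$.
\item In the half-reflection case, $\eta_1=-\rho$. For $\eta_2$, each $p\in E$ contributes $c(2,4m_p)=-2\mu(m_p)$, and each $p\in P\setminus E$ contributes $c(2,2m_p)+c(2,m_p)=2\mu(m_p)$ (using Lemma~\ref{lem:ramanujan-odd}(2),(3) and $c(2,m)=c(1,m)=\mu(m)$ for odd $m$). The divisors $q,2q$ contribute $\rho(c(2,4)+c(2,2))=-\rho$.
\end{itemize}

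The half-reflection case is the main obstacle, because $\eta_2-\eta_1=2\sum_{p\in P\setminus E}\mu(m_p)-2\sum_{p\in E}\mu(m_p)$ need not equal $\pm2$. If this fails, I would fall back on the fact that $M$ divides all differences $\eta_{h}-\eta_{h'}$ and $\lambda_1-\lambda_i$. Since Lemma~\ref{lem:iff of PST} gives $v_2(\eta_{h+1}-\eta_h)=\gamma=1$ exactly, every odd prime divisor $r$ of $M$ would have to divide all $\eta_{h+1}-\eta_h$. The plan is to rule this out by the same Möbius-inversion idea as in Lemma~\ref{lem:auxiliary-Km}, applied to the values $c(j,m_p)$. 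A cleaner alternative uses $\lambda_1-\lambda_2=n=4q$, which holds since $\alpha+\beta=1$. Then $M\mid 4q$, and with $v_2(M)=1$ this gives $M=2r$ with $r\mid q$. It remains to show that some difference is not divisible by any prime $r\mid q$. I would try $\eta_{2j}-\eta_{2j-1}$ at $j=1$ together with $\lambda_1-\lambda_3$, which reduces to a computation modulo $r$.

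For $q=1$ ($n=4$), I read $M$ directly off the tables in the proofs of the two theorems.
\begin{itemize}
\item Full reflection, $D=\{1\}$: the spectrum $\{6,-2,0\}$ gives $M=\gcd(8,6)=2$.
\item Full reflection, $D=\{2\}$: the spectrum $\{5,-3,1,-1\}$ gives $M=\gcd(8,4,6)=2$.
\item Half reflection, $D=\{1,2\}$: the spectrum $\{5,1,1,-3,-1\}$ gives $M=\gcd(4,8,6)=2$.
\end{itemize}
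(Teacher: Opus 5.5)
Your arguments for $2\mid M$, for the full-reflection case and for $q=1$ are correct and match the paper. There is a genuine gap in the half-reflection case with $q>1$: you never prove $M\mid 2$, and the route you propose does not work. You rightly note that $\eta_2-\eta_1$ need not be $\pm2$ and that $M\mid\lambda_1-\lambda_2=4q$. However, pairing $4q$ with $\eta_2-\eta_1$ and $\lambda_1-\lambda_3$ can leave an odd common factor.

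Take $n=60$ (so $q=15$ and $P=\{1,3,5\}$), $E=\{1\}$, $\rho=1$ and $S_1=b\langle a^2\rangle$, so that $D=\{1,6,10,12,15,20,30\}$. This graph admits PST by Theorem~\ref{thm:n4mod8-half-reflection}. A direct computation gives $\lambda_1=61$, $\lambda_2=1$, $\lambda_3=25$, $\lambda_4=-35$, $\eta_1=-1$ and $\eta_2=-7$. Then $\lambda_1-\lambda_2=60$, $\eta_2-\eta_1=-6$, $\lambda_1-\lambda_3=36$ and $\lambda_1-\lambda_4=96$ are all divisible by $6$. So your planned computation modulo $r$ fails for $r=3$. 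The M\"obius-inversion fallback is too vague to count as an argument.

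The missing idea is to compute $\lambda_1$ exactly rather than modulo $4$. For every $p\in P$, whether or not $p\in E$, the divisors attached to $p$ contribute exactly $2\varphi(q/p)$ to $|S_0|$: this is $\varphi(4m_p)$ if $p\in E$, and $\varphi(2m_p)+\varphi(m_p)$ otherwise. Hence
\[
|S_0|=2\sum_{d\mid q,\ d>1}\varphi(d)+3\rho=2(q-1)+3\rho,
\]
so $\lambda_1=4q-2+3\rho$ and $\lambda_1-\eta_1=2(2q-1+2\rho)$. Since $2q-1+2\rho\in\{2q-1,2q+1\}$ is coprime to $2q$, you get $M\mid\gcd\bigl(4q,\,2(2q-1+2\rho)\bigr)=2$. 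This is the paper's argument, and in the example above it gives $\gcd(60,62)=2$.
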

\begin{proof}
	By Lemma~\ref{lem:iff of PST}, the minimum perfect state transfer time is $\frac{\pi}{M}$, $M=\gcd\{\lambda_1-\lambda:\lambda\neq \lambda_1\}$.
	It suffices to prove \(M=2\).
	Let \(n=4q\), where \(q\) is odd. 
	If \(q=1\), then the direct computations in the proofs of Theorems~\ref{thm:n4mod8-full-reflection}	and~\ref{thm:n4mod8-half-reflection} show that every admissible case has \(M=2\).
	Hence we may assume that \(q>1\).

	In both characterizations, all eigenvalue differences \(\lambda_1-\lambda\) are
	even. 
	Hence \(2\mid M\). We show that \(M\mid 2\).
	
	First assume that \(S_1=b\langle a\rangle\). 
	By Theorem~\ref{thm:n4mod8-full-reflection}, $D=E\cup 2E\cup 4E\cup\{\xi\}$, $\xi\in\{q,2q\}$.
	Each triple \(\{e,2e,4e\}\), \(e\in E\), contribute \(0\) to both \(\eta_1\) and \(\eta_2\). 
	The divisor \(\xi\) gives $(\eta_1,\eta_2)=(0,-2)$ if $\xi=q$, and $(\eta_1,\eta_2)=(-1,1)$ if $\xi=2q$.
	Hence \(\eta_2-\eta_1=\pm2\). 
	Since $\Gamma$ is connected, $\lambda_1$ is the unique largest eigenvalue, and so $\eta_1$, $\eta_2\neq \lambda_1$.
	Therefore \(M\) divides both \(\lambda_1-\eta_1\) and \(\lambda_1-\eta_2\), and so \(M\mid(\eta_2-\eta_1)=\pm 2\).
    Thus \(M=2\).
	
	Now assume that \(S_1\in\{b\langle a^2\rangle,ba\langle a^2\rangle\}\). 
	By Theorem~\ref{thm:n4mod8-half-reflection}, $D=E\cup 2(P\setminus E)\cup 4(P\setminus E)\cup \rho\{q,2q\}$, $\rho\in\{0,1\}$.
	Here \(S_1=\alpha b\langle a^2\rangle\cup\beta ba\langle a^2\rangle\) with \(\alpha+\beta=1\). 
	For every \(p\in P\), the corresponding contribution to \(|S_0|\) is \(2\varphi(\frac qp)\), and hence $|S_0|=2(q-1)+3\rho$.
	Thus $\lambda_1=|S_0|+2q=4q-2+3\rho$, $\eta_1=-\rho$.
	Therefore $\lambda_1-\eta_1=2(2q-1+2\rho)$.
	Moreover, $\lambda_1-\lambda_2=4q$.
	Hence $	M\mid \gcd(4q,2(2q-1+2\rho))=2$, since \(\gcd(2q,2q-1+2\rho)=1\) for \(\rho\in\{0,1\}\). 
	Thus \(M=2\) in this case as well.
	
	Consequently, the minimum perfect state transfer time is $\frac{\pi}{M}=\frac{\pi}{2}$.
	\end{proof}
\begin{rem}\label{rem:1}
Theorems \ref{thm:8midn-characterization}, \ref{thm:n4mod8-full-reflection} and \ref{thm:n4mod8-half-reflection} are arguably one of the most practical contributions of this paper. 
Although Lemma \ref{lem:iff of PST} \cite[Theorem 3.2]{Perfect state transfer on Cayley graphs over dihedral group} gives an eigenvalue criterion for perfect state transfer, it is not easy to use in practice, since one must first compute all eigenvalues and then verify a collection of required \(2\)-adic conditions, while the transfer time is specified only implicitly through the graph-dependent parameter \(M\) defined in Lemma \ref{lem:iff of PST} . 	
By contrast, our results provide a direct and explicit structural characterization of the connection set, and determine the perfect state transfer time explicitly. 
In particular, for a fixed dihedral group, constructing Cayley graphs with perfect state transfer from the eigenvalue criterion alone is a highly nontrivial task, whereas our result turns this into an immediate and systematic procedure.
\end{rem}

\subsection{The case \(n\equiv 2\pmod 4\)}

Write \(n=2m\), where \(m\) is odd. 
Then \(D=D_0\cup D_1\). 
The first step is to determine the odd-indexed eigenvalues, as in Lemma \ref{thm:-1 0} for the case \(n\equiv0\pmod4\).

\begin{lem}\label{lem:n2-odd-eta}
	Let $n \equiv 2 \pmod{4}$, $G \cong D_{2n}$ and $\Gamma = \operatorname{Cay}(G,S)$. 
	If $\Gamma$ admits perfect state transfer, then for every odd $h$ with $1 \le h \le \frac{n}{2} - 1$, $\eta_h = -1$ if $\frac{n}{2} \in D_1$, and $\eta_h = 0$ if $\frac{n}{2} \notin D_1$. 
	Consequently, $D_0 = 2(D_1 \setminus \{\frac{n}{2}\})$.
\end{lem}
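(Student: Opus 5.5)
The plan is to mirror the proof of Lemma~\ref{thm:-1 0}, working with the auxiliary graph $X:=\mathrm{ICG}_n(D)$, with eigenvalues $\theta_j=\sum_{d\in D}c(j,\frac nd)$. Since $v_2(n)=1$, we have $D=D_0\cup D_1=D_{\le 1}$, so $\eta_h=\theta_h$ for $1\le h\le \frac n2-1$. The new difficulty is that $\frac n2=m$ is odd. Hence the odd positions $j\in\{1,\dots,n-1\}$ consist of the pairs $h,n-h$ with $h$ odd and $h<m$, which give $\theta_{n-h}=\theta_h=\eta_h$, together with the extra odd position $j=m$. That position is not an $\eta$-eigenvalue of $\Gamma$. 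Since $\omega^{mk}=(-1)^k$, we have $\theta_m=\sum_{a^k\in S_0}(-1)^k=T$. So the main obstacle is to show that $T$ has the same parity as the odd-indexed $\eta_h$.

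\emph{Step 1: the reflection part is forced to be $S_1=b\langle a\rangle$.} Write $S_1=\alpha\mathcal R_0\cup\beta\mathcal R_1$. By Lemma~\ref{lem:iff of PST}(1), $v_2(\lambda_1-\lambda_2)=v_2(2m(\alpha+\beta))>\gamma$. Suppose $\alpha+\beta=1$, so that $\gamma=0$. Put $s=\alpha-\beta=\pm1$. Then $\lambda_1-\lambda_3=A-T+m(1-s)$ is either $A-T$ or $A-T+2m$. In both cases it is even, since $A-T=2\,\#\{a^k\in S_0:k \text{ odd}\}$. This contradicts $v_2(\lambda_1-\lambda_3)=\gamma=0$. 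If $\alpha+\beta=0$, then $\Gamma$ is disconnected; if $\Gamma$ is not assumed connected, the condition $v_2(\lambda_1-\lambda_2)>\gamma$ still leaves $\lambda_3=\lambda_4=T$. Hence $\lambda_3=\lambda_4=T$ in every remaining case.

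\emph{Step 2: parity of $T$.} We have $v_2(\lambda_1-\lambda_3)=v_2(\lambda_1-\eta_1)=\gamma$. Therefore $v_2(\lambda_3-\eta_1)>\gamma\ge 0$, so $T=\lambda_3\equiv\eta_1\pmod 2$. By Corollary~\ref{core:parity}, all $\eta_h$ with $h$ odd have the same parity; this holds in both the alternating case and the constant case. Combining these, every odd-position eigenvalue $\theta_j$ of $X$ has the same parity as $\eta_1$.

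\emph{Step 3: conclude exactly as in Lemma~\ref{thm:-1 0}.} Suppose first that $\frac n2\in D_1$ and all odd-position eigenvalues are even. Then Lemma~\ref{lem:basic lem12+13}(1) forces $n\in 2D_1\subseteq D$, which is impossible. Hence they are odd, and Lemma~\ref{lem:basic lem12+13}(2) gives $D=D_1^*\cup 2D_1^*\cup\{\frac n2\}$. Now suppose $\frac n2\notin D_1$ and all odd-position eigenvalues are odd. Adding $\frac n2$ changes each odd-position eigenvalue by $c(j,2)=-1$, which makes them all even. Lemma~\ref{lem:basic lem12+13}(1) applied to $D\cup\{\frac n2\}$ then produces $n$, again a contradiction. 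Hence they are even and $D=D_1\cup 2D_1$. In both cases, for odd $h$ the pairs $\{e,2e\}$ with $e\in D_1\setminus\{\frac n2\}$ cancel by Lemma~\ref{lem:ramanujan-odd}(1), because $\frac n{2e}$ is odd. The only surviving term is the possible contribution $c(h,2)=-1$. This gives $\eta_h=-1$ or $0$ as claimed, and $D_0=2(D_1\setminus\{\frac n2\})$.
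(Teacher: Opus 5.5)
Your proof is correct. Its skeleton matches the paper's: the auxiliary graph $\mathrm{ICG}_n(D)$, the single extra odd position $j=m=\frac n2$, Lemma~\ref{lem:basic lem12+13}, and the cancellation of the pairs $\{e,2e\}$. Where you differ is in how you match the parity of the extra eigenvalue $\theta_m=T$ with that of the odd-indexed $\eta_h$.

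The paper computes $\theta_m\equiv|S_0|\pmod 2$ directly. Since $\varphi(\frac nd)$ is odd only for $d=m$, $\theta_m$ is odd exactly when $m\in D_1$. Then, in each of the two cases, it argues by contradiction, splitting on the parity of $\alpha+\beta$ and playing $v_2(\lambda_1-\eta_1)$ against either $v_2(\lambda_1-\lambda_3)$ or $v_2(\lambda_1-\lambda_2)=1$, to show that the odd-indexed $\eta_h$ share the parity of $\theta_m$. Because this fixes the parity from the outset, the paper needs neither the auxiliary graph $\mathrm{ICG}_n(D\cup\{m\})$ nor the ``$n\in D$'' contradiction.

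You instead rule out $\alpha+\beta=1$ once and for all: it would force $\gamma=0$, while $\lambda_1-\lambda_3=A-T+2m\beta$ is always even. Hence $\lambda_3=T=\theta_m$. The $2$-adic fact that two integers of equal valuation $\gamma$ differ by an integer of valuation $>\gamma$ then gives $T\equiv\eta_1\pmod 2$ without knowing which parity occurs. You settle the parity afterwards exactly as in Lemma~\ref{thm:-1 0}.

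Your route is more uniform and reuses the $n\equiv 0\pmod 4$ argument verbatim. It also has a bonus: your Step~1 yields $\alpha=\beta$ without assuming connectivity. Once connectivity forces $\alpha=\beta=1$, this gives Proposition~\ref{prop:n2-S1} immediately and bypasses the $\tau_0$-parity computation the paper uses there.
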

\begin{proof}
Put $m = \frac{n}{2}$. Since $n \equiv 2 \pmod{4}$, we have $m$ odd and $D = D_0 \cup D_1$. 
Let $X := \operatorname{ICG}_n(D)$, and denote its eigenvalues by $\theta_j = \sum\limits_{d \in D} c(j, \frac{n}{d})$, $0 \le j \le n-1$.
For every odd $h$ with $1 \le h \le m-1$, we have $\theta_h = \eta_h$.
Since $X$ is undirected, $\theta_{n-h} = \theta_h = \eta_h$. 
Thus the only odd-position eigenvalue of $X$ not represented by some such $\eta_h$ is $\theta_m$.

We first record the parity of $\theta_m$. 
Since $\theta_m = \sum\limits_{a^k \in S_0} (-1)^k$, we have $\theta_m \equiv |S_0| \pmod{2}$. 
Moreover, $|S_0| = \sum\limits_{d \in D} \varphi(\frac{n}{d})$.
Here $\varphi(\frac{n}{d})$ is odd only when $\frac{n}{d} = 2$, that is, only when $d = m$. 
Hence
\[
\theta_m \equiv |S_0| \equiv
\begin{cases}
	1 \pmod{2}, & m \in D_1, \\
	0 \pmod{2}, & m \notin D_1.
\end{cases}
\]

By Corollary \ref{core:parity}, all $\eta_h$ with $h$ odd have the same parity. 
We now determine the parity of $\theta_m$ according to whether $m$ belongs to $D_1$ or not.

If $m \in D_1$, then $|S_0|$ and $\theta_m$ are odd. We claim that all $\eta_h$ with $h$ odd are odd. 
Suppose, to the contrary, that they are all even. 
In particular, $\eta_1$ is even. 
Write $A := |S_0|$, $T := \sum\limits_{a^k \in S_0} (-1)^k$.
	Then \(A\) and \(T\) are both odd. 
	By Corollary \ref{core:eigenvalues of D2n},
	\[
	\lambda_1=A+m(\alpha+\beta),~~
	\lambda_2=A-m(\alpha+\beta),~~
	\lambda_3=T+m(\alpha-\beta),~~
	\lambda_4=T+m(-\alpha+\beta).
	\]
	If \(\alpha+\beta\) is even, then \(\lambda_1\) is odd. Since \(\eta_1\) is even, we get $v_2(\lambda_1-\eta_1)=0$.
	However, $\lambda_1-\lambda_3=A-T+2\beta m$	is even, contradicting Lemma \ref{lem:iff of PST}, which requires $	v_2(\lambda_1-\eta_1)=v_2(\lambda_1-\lambda_3)$.
	If \(\alpha+\beta\) is odd, then \(\alpha+\beta=1\), and \(\lambda_1\) is even. 
	Hence $v_2(\lambda_1-\eta_1)\ge 1$.
	But $\lambda_1-\lambda_2=2m$, so $v_2(\lambda_1-\lambda_2)=1$, which contradicts the condition $v_2(\lambda_1-\lambda_2)>v_2(\lambda_1-\eta_1)$	in Lemma \ref{lem:iff of PST}. 
	Thus all \(\eta_h\) with \(h\) odd are odd.
	Since \(\theta_m\) is also odd, all odd-position eigenvalues of \(X=\mathrm{ICG}_n(D)\)
	are odd. 
	By Lemma \ref{lem:basic lem12+13}-(2), we obtain $D=D_1^*\cup 2D_1^*\cup\{m\}$, $D_1^*:=D_1\setminus\{m\}$.
	Therefore, for odd \(h\),
	\[
	\eta_h
	=
	\sum_{d\in D_1^*}
	\left(
	c\left(h,\frac nd\right)
	+
	c\left(h,\frac n{2d}\right)
	\right)
	+
	c(h,2).
	\]
	For \(d\in D_1^*\),  \(\frac{n}{2d}\) is odd.
	Since \(h\) is odd, Lemma \ref{lem:ramanujan-odd}-(1) gives $c\left(h,\frac nd\right)=-c\left(h,\frac n{2d}\right)$.
	Thus all paired terms cancel. 
	Moreover, \(c(h,2)=-1\). 
	Hence $\eta_h=-1$ for every odd \(h\).
	
	If \(m\notin D_1\), then \(|S_0|\) and \(\theta_m\) are even. We claim that all
	\(\eta_h\) with \(h\) odd are even. Suppose, to the contrary, that they are all odd. In particular,
	\(\eta_1\) is odd. With the notation above, \(A\) and \(T\) are both even.
	If \(\alpha+\beta\) is even, then \(\lambda_1\) is even. Since \(\eta_1\) is odd, we get $v_2(\lambda_1-\eta_1)=0$.
	But $\lambda_1-\lambda_3=A-T+2\beta m$ is even, again contradicting Lemma \ref{lem:iff of PST}. 
	If \(\alpha+\beta\) is odd, then \(\alpha+\beta=1\), and \(\lambda_1\) is odd. 
	Since \(\eta_1\) is odd, we have $v_2(\lambda_1-\eta_1)\ge 1$.
	On the other hand, $\lambda_1-\lambda_2=2m$, so \(v_2(\lambda_1-\lambda_2)=1\), contradicting $v_2(\lambda_1-\lambda_2)>v_2(\lambda_1-\eta_1)$.
	Therefore all \(\eta_h\) with \(h\) odd are even.
	Since \(\theta_m\) is also even, all odd-position eigenvalues of \(X=\mathrm{ICG}_n(D)\)
	are even. 
	By Lemma \ref{lem:basic lem12+13}-(1), $D=D_1\cup 2D_1$.
	Hence, for odd \(h\),
	\[
	\eta_h
	=
	\sum_{d\in D_1}
	\left(
	c\left(h,\frac nd\right)
	+
	c\left(h,\frac n{2d}\right)
	\right).
	\]
	Again \(\frac{n}{2d}\) is odd and $c(h,\frac nd)=-c(h,\frac n{2d})$.
	Thus  $\eta_h=0$ for every odd \(h\).
	
	Finally, in the first case we have $D=D_1^*\cup 2D_1^*\cup\{m\}$, and in the second case we have $D=D_1\cup 2D_1$.
	Since \(D=D_0\cup D_1\), both cases give $D_0=2(D_1\setminus\{\frac n2\})$.
	This completes the proof.
\end{proof}

We next show that the case \(\frac n2\in D_1\) in Lemma \ref{lem:n2-odd-eta} is impossible.

\begin{pro}\label{prop:n2-no-special}
	Let $n\equiv 2 \pmod 4$, $G\cong D_{2n}$, and $\Gamma=\mathrm{Cay}(G,S)$ is connected. 
	If $\Gamma$ admits perfect state transfer, then $\frac{n}{2} \notin D_1$ and $D_0 = 2D_1$.
\end{pro}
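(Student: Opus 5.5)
By Lemma~\ref{lem:n2-odd-eta}, it suffices to show that $m:=\frac n2\notin D_1$: in that case the same lemma gives $D_0=2(D_1\setminus\{m\})=2D_1$. So I will assume $m\in D_1$ and derive a contradiction with the $2$-adic conditions in case (1) of Lemma~\ref{lem:iff of PST}. Since $n\ge 3$ and $n\equiv 2\pmod 4$, we have $n\ge 6$, so $m\ge 3$ is odd and the index range $1\le h\le m-1$ contains both $h=1$ and $h=2$. Under the assumption $m\in D_1$, Lemma~\ref{lem:n2-odd-eta} gives
\[
D=D_1^*\cup 2D_1^*\cup\{m\},\qquad D_1^*=D_1\setminus\{m\},\qquad \eta_h=-1 \text{ for odd } h .
\]
Every $d\in D_1^*$ is an odd proper divisor of $m$, and $\frac nd=2m_d$ with $m_d=\frac md>1$ odd.

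The next step is to compute all the relevant eigenvalues in terms of $\tau_j:=\sum_{d\in D_1^*}c(j,m_d)$, which are the eigenvalues of $\mathrm{ICG}_m(D_1^*)$.
\begin{itemize}
\item[(a)] For $\eta_{2j}$: by Lemma~\ref{lem:basic lem 17}, the pair $\{d,2d\}$ contributes $c(2j,2m_d)+c(2j,m_d)=2c(j,m_d)$, and $m$ contributes $c(2j,2)=1$. Hence $\eta_{2j}=2\tau_j+1$.
\item[(b)] For $A=|S_0|$: the pair $\{d,2d\}$ contributes $\varphi(2m_d)+\varphi(m_d)=2\varphi(m_d)$, and $m$ contributes $1$. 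Hence $A=2\tau_0+1$.
\item[(c)] For $T=\sum_{a^k\in S_0}(-1)^k$: exponents in $G_n(d)$ are odd and exponents in $G_n(2d)$ are even, so each pair contributes $-\varphi(m_d)+\varphi(m_d)=0$, and $m$ contributes $-1$. Hence $T=-1$.
\item[(d)] Parity of $\tau_0$: each $m_d>1$ is odd, so each $\varphi(m_d)$ is even and $\tau_0$ is even.
\end{itemize}
Since $\Gamma$ is connected, $S_1\neq\emptyset$, so $\alpha+\beta\in\{1,2\}$. I split into these two cases.

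\emph{Case $\alpha+\beta=2$.} Here $\lambda_1-\lambda_2=4m$ has $2$-adic valuation $2$, so the condition $v_2(\lambda_1-\lambda_2)>\gamma$ forces $\gamma\le 1$. On the other hand,
\[
\lambda_1-\eta_1=2\tau_0+1+2m+1=2(\tau_0+m+1).
\]
Here $\tau_0$ is even and $m$ is odd, so $\tau_0+m+1$ is even and $v_2(\lambda_1-\eta_1)\ge 2$. This contradicts $v_2(\lambda_1-\eta_1)=\gamma\le 1$.

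\emph{Case $\alpha+\beta=1$.} Here $\lambda_1-\lambda_2=2m$ has valuation $1$, so $\gamma=0$. But
\[
\eta_2-\eta_1=2\tau_1+1-(-1)=2(\tau_1+1)
\]
is even, so $v_2(\eta_2-\eta_1)\ge 1\ne\gamma$. This contradicts the condition $v_2(\eta_{j+1}-\eta_j)=\gamma$ at $j=1$.

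Both cases are contradictory, so $m\notin D_1$, and then $D_0=2D_1$ follows from Lemma~\ref{lem:n2-odd-eta}. The main point needing care is step (c), that $T=-1$ exactly: it uses that $k$ and $\gcd(k,n)$ have the same parity, as in the proof of Lemma~\ref{lem:D3 PST}. Together with (d), it is this cancellation and parity count that makes the $\alpha+\beta=2$ case fail.
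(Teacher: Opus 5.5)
Your proof is correct and follows essentially the same route as the paper. You assume $\frac n2\in D_1$, use Lemma~\ref{lem:n2-odd-eta} to get $\eta_1=-1$, $A\equiv 1\pmod 4$ and $T=-1$, split on $\alpha+\beta$, and handle $\alpha+\beta=2$ exactly as the paper does via $v_2(\lambda_1-\lambda_2)=2\le v_2(\lambda_1-\eta_1)$. The only difference is in the case $\alpha+\beta=1$: you force $\gamma=0$ from $\lambda_1-\lambda_2=2m$ and then note that $\eta_2-\eta_1=2(\tau_1+1)$ is even, whereas the paper observes directly that $\lambda_1-\eta_1$ is odd while $\lambda_1-\lambda_3$ is even, which avoids computing $\eta_2$ and needing $m\ge 3$.
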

\begin{proof}
	Put \(m=\frac n2\). 
	Assume, to the contrary, that \(m\in D_1\). 
	Then, by Lemma \ref{lem:n2-odd-eta}, $D_0=2(D_1\setminus\{m\})$ and $\eta_1=-1$.
	
	Write $A:=|S_0|$, $T:=\sum\limits_{a^k\in S_0}(-1)^k$.
	For each \(d\in D_1\setminus\{m\}\), the two divisors \(d\) and \(2d\)
	occur together. 
	Since \(\frac n{2d}>1\) is an odd integer, their total contribution to \(A\) is $\varphi(\frac nd)+\varphi(\frac n{2d})=2\varphi(\frac n{2d})\in 4\mathbb Z$,
	Their total contribution to \(T\) is \(0\). 
	The divisor \(m\) contributes \(1\) to \(A\) and \(-1\) to \(T\). 
	Hence $A\equiv 1\pmod 4$, $T=-1$.
	By Corollary \ref{core:eigenvalues of D2n}, $\lambda_1=A+m(\alpha+\beta)$, $\lambda_3=-1+m(\alpha-\beta)$, $\lambda_4=-1+m(-\alpha+\beta)$.
	Thus $\lambda_1-\lambda_3=A+1+2m\beta$, $\lambda_1-\lambda_4=A+1+2m\alpha$.
	Moreover, $\lambda_1-\eta_1=\lambda_1+1=A+1+m(\alpha+\beta)$.
	
	If \(\alpha+\beta\) is odd, then \(\lambda_1-\eta_1\) is odd, while 	\(\lambda_1-\lambda_3\) and \(\lambda_1-\lambda_4\) are even. 
	This contradicts Lemma \ref{lem:iff of PST}. 
	Hence \(\alpha+\beta\) is even, so \(\alpha=\beta\).
	If \(\alpha=\beta=1\), then $\lambda_1-\eta_1=A+1+2m\equiv 0\pmod 4$, and hence $v_2(\lambda_1-\eta_1)\ge 2$.
	However, $\lambda_1-\lambda_2=4m$, so \(v_2(\lambda_1-\lambda_2)=2\), contradicting the condition $v_2(\lambda_1-\lambda_2)>v_2(\lambda_1-\eta_1)$ in Lemma \ref{lem:iff of PST}. Therefore \(\alpha=\beta=0\).
	Thus \(S_1=\emptyset\), contradicts the assumption that \(\Gamma\) is connected.
	Therefore \(m\notin D_1\). By Lemma \ref{lem:n2-odd-eta} , we obtain $D_0=2D_1$.
	This completes the proof.
\end{proof}

By Proposition~\ref{prop:n2-no-special}, we have \(\frac n2\notin D_1\) and \(D_0=2D_1\). 
We now use these two conditions to describe \(S_0\) through an auxiliary integral circulant graph of order \(\frac n2\).

\begin{lem}\label{lem:n2-reduction}
	Let \(n=2m\), where \(m\) is odd.
	Assume that \(\Gamma\) is connected and admits perfect state transfer. 
	Then \(\Gamma\) is integral, and hence $S_0=\bigcup\limits_{d\in D}G_n(d)$, where \(D\) is a set of proper divisors of \(n\). 
	Set \(E:=D_1\), and let $\tau_j=\sum\limits_{e\in E}c(j,\frac{m}{e})$, $0\le j\le m-1$, be the eigenvalues of \(\mathrm{ICG}_m(E)\). 
	Then, for \(1\le j\le \frac{m-1}{2}\), $\eta_{2j-1}=0$, $\eta_{2j}=2\tau_j$.
	Moreover, $|S_0|=2\tau_0$, $\sum\limits_{a^k\in S_0}(-1)^k=0$.
	In particular, \(\tau_0\) is even.
\end{lem}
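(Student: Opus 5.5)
By Lemma~\ref{lem:iff of PST}, the graph \(\Gamma\) is integral, so Corollary~\ref{core:eigenvalues of D2n} gives \(S_0=\bigcup_{d\in D}G_n(d)\). Proposition~\ref{prop:n2-no-special} then yields \(m\notin D_1\) and \(D_0=2D_1\). Since \(v_2(n)=1\), we have \(D=D_0\cup D_1=E\cup 2E\) with \(E=D_1\). Every \(e\in E\) is a proper divisor of \(m\), because \(e\mid n\), \(v_2(\frac ne)=1\) and \(e\neq m\). The plan is to compute each eigenvalue by grouping the divisors in pairs \(\{e,2e\}\), exactly as in Lemma~\ref{lem:n4mod8-common-computation}. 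For a fixed pair we put \(m_e=\frac me\), which is odd and greater than \(1\), so that \(\frac ne=2m_e\) and \(\frac n{2e}=m_e\).

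For the odd indices we take \(h=2j-1\). Lemma~\ref{lem:ramanujan-odd}-(1) gives \(c(h,2m_e)+c(h,m_e)=0\), so each pair contributes nothing and \(\eta_{2j-1}=0\); this also agrees with Lemma~\ref{lem:n2-odd-eta}. For the even indices \(2j\), Lemma~\ref{lem:basic lem 17} gives \(c(2j,2m_e)=c(j,m_e)\), since \(v_2(2m_e)=1\), and \(c(2j,m_e)=c(j,m_e)\), since \(v_2(m_e)=0\). Hence each pair contributes \(2c(j,\frac me)\), and summing over \(E\) gives \(\eta_{2j}=2\tau_j\). The range \(1\le j\le\frac{m-1}{2}\) matches the requirement \(2j\le\frac n2-1=m-1\).

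For \(|S_0|\), each pair contributes \(\varphi(2m_e)+\varphi(m_e)=2\varphi(m_e)=2c(0,m_e)\). Therefore \(|S_0|=2\tau_0\).

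For \(T=\sum_{a^k\in S_0}(-1)^k\) we use parity. On \(G_n(d)\) the exponent \(k\) has the same parity as \(d\), and \(e\) is odd while \(2e\) is even. So the pair contributes \(-\varphi(2m_e)+\varphi(m_e)=0\), which gives \(T=0\).

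Finally, the parity of \(\tau_0\) follows because each \(\varphi(m_e)\) with \(m_e>1\) odd is even. Alternatively, \(\tau_0=\sum_{e\in E}\varphi(\frac me)\) is even for the same reason.

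The only delicate step is checking the hypotheses of Lemma~\ref{lem:basic lem 17} and the parity bookkeeping in \(T\); the rest is routine.
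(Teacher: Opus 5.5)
Your proposal is correct and follows essentially the same route as the paper. Proposition~\ref{prop:n2-no-special} gives $D=E\cup 2E$, and you then handle each pair $\{e,2e\}$ with Lemma~\ref{lem:ramanujan-odd}-(1) for odd indices and Lemma~\ref{lem:basic lem 17} for even indices. (The paper uses Lemma~\ref{lem:ramanujan-odd}-(3) for the odd-modulus term, which amounts to the same thing.) The same parity bookkeeping then yields $|S_0|=2\tau_0$, $\sum_{a^k\in S_0}(-1)^k=0$, and the evenness of $\tau_0$.
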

\begin{proof}
	Since \(n\equiv 2\pmod 4\), we have \(D=D_0\cup D_1\). 
	By Proposition~\ref{prop:n2-no-special}, \(m=\frac n2\notin D_1\) and \(D_0=2D_1\). 
	Hence \(E=D_1\subseteq\{e:e\mid m,\ e<m\}\).
	It follows that \(D=E\cup 2E\), and hence $S_0=\bigcup\limits_{e\in E}(G_n(e)\cup G_n(2e))$.
	
	Let \(h\) be odd. Then
	\[
	\eta_h=\sum_{e\in E}\left(c \left(h,\frac{n}{e}\right)+c \left(h,\frac{n}{2e}\right)\right)
	=\sum_{e\in E}\left(c \left(h,\frac{2m}{e}\right)+c \left(h,\frac{m}{e}\right)\right).
	\]
	Since \(\frac{m}{e}\) is odd, Lemma \ref{lem:ramanujan-odd}-(1) gives $c \left(h,\frac{2m}{e}\right)=-c \left(h,\frac{m}{e}\right)$.
	Therefore \(\eta_h=0\) for every odd \(h\), and in particular $\eta_{2j-1}=0$, $1\le j\le \frac{m-1}{2}$.
	
	Now let \(1\le j\le \frac{m-1}{2}\). We obtain $\eta_{2j}=\sum\limits_{e\in E}(c(2j,\frac{2m}{e})+c(2j,\frac{m}{e}))$.
	By Lemma \ref{lem:basic lem 17}, $c(2j,\frac{2m}{e})=c (j,\frac{m}{e})$, and Lemma \ref{lem:ramanujan-odd}-(3) gives $c(2j,\frac{m}{e})=c(j,\frac{m}{e})$.
	Hence $\eta_{2j}=2\sum\limits_{e\in E}c \left(j,\frac{m}{e}\right)=2\tau_j$.
	
	Next,
	\[
	|S_0|
	=\sum_{e\in E}|G_n(e)|+\sum_{e\in E}|G_n(2e)|
	=\sum_{e\in E}\varphi \left(\frac{2m}{e}\right)+\sum_{e\in E}\varphi \left(\frac{m}{e}\right)
	=2\sum_{e\in E}\varphi \left(\frac{m}{e}\right)
	=2\tau_0.
	\]
	For each \(e\in E\), the integer \(e\) is odd. 
Hence \(a^k\in G_n(e)\) implies that
\(k\) is odd, while \(a^k\in G_n(2e)\) implies that \(k\) is even. 
	Moreover, $|G_n(e)|=\varphi(\frac{n}{e})=\varphi (\frac{2m}{e})=\varphi(\frac{m}{e})=\varphi(\frac{n}{2e})=|G_n(2e)|$.
	Therefore, for every \(e\in E\),
	\[
	\sum_{a^k\in G_n(e)}(-1)^k+\sum_{a^k\in G_n(2e)}(-1)^k=0.
	\]
	Summing over all \(e\in E\), we get $\sum\limits_{a^k\in S_0}(-1)^k=0$.
	
	Finally, for \(e\in E\), \(\frac{m}{e}>1\) is odd, and hence
	\(\varphi \left(\frac{m}{e}\right)\) is even. Therefore $\tau_0=\sum\limits_{e\in E}\varphi \left(\frac{m}{e}\right)$ is even. 
	This completes the proof.
\end{proof}

We next determine the possible structure of \(S_1\).

\begin{pro}\label{prop:n2-S1}
	Let $n=2m$ with $m$ odd, $G\cong D_{2n}$, $\Gamma=\mathrm{Cay}(G,S)$ be connected. 
	If $\Gamma$ admits
	perfect state transfer, then $S_1=b\langle a\rangle$.
\end{pro}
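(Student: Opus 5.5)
We argue by contradiction, using the explicit eigenvalue data of Lemma~\ref{lem:n2-reduction}. Write $S_1=\alpha\mathcal R_0\cup\beta\mathcal R_1$ with $\alpha,\beta\in\{0,1\}$. Since $\Gamma$ is connected, $S_1\ne\emptyset$, so $\alpha+\beta\ge1$. The claim $S_1=b\langle a\rangle$ is exactly $\alpha=\beta=1$. It therefore suffices to rule out $\alpha+\beta=1$.

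So assume $\alpha+\beta=1$. By Lemma~\ref{lem:n2-reduction} we have $A=|S_0|=2\tau_0$ with $\tau_0$ even, so $A\equiv0\pmod4$. The same lemma gives $T=\sum_{a^k\in S_0}(-1)^k=0$ and $\eta_1=0$. Corollary~\ref{core:eigenvalues of D2n} then gives
\[
\lambda_1=A+m,\quad \lambda_2=A-m,\quad \lambda_3=m(\alpha-\beta)=\pm m,\quad \lambda_4=\mp m .
\]
Since $m$ is odd, $\lambda_1-\eta_1=A+m$ is odd, so $v_2(\lambda_1-\eta_1)=0$, which forces $\gamma=0$.

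Lemma~\ref{lem:iff of PST}(1) also requires $v_2(\lambda_1-\lambda_3)=v_2(\lambda_1-\lambda_4)=\gamma$. However, both $\lambda_1-\lambda_3$ and $\lambda_1-\lambda_4$ lie in $\{A,\ A+2m\}$, and both of these are even. Their $2$-adic valuations are therefore at least $1>\gamma=0$, a contradiction. (The condition $v_2(\lambda_1-\lambda_2)=v_2(2m)=1>0$ causes no trouble, so the contradiction must come from $\lambda_3,\lambda_4$.)

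Hence $\alpha+\beta=2$, that is, $S_1=\mathcal R_0\cup\mathcal R_1=b\langle a\rangle$.

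The only real obstacle is confirming that Lemma~\ref{lem:n2-reduction} applies without already knowing $S_1$. It does: its proof relies only on Proposition~\ref{prop:n2-no-special} and on Ramanujan sums over $S_0$, neither of which depends on $S_1$. A secondary point is that the case $n\equiv2\pmod4$ demands equality $v_2(\lambda_1-\lambda_3)=\gamma$ rather than a strict inequality, and that equality condition is exactly what the parity mismatch violates. No separate treatment of small $m$ (e.g.\ $m=1$, $n=2$) seems necessary, since in that case $D=\emptyset$ and the same computation with $A=T=0$ goes through.
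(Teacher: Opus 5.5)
Your main argument is correct and essentially the paper's. Both take $|S_0|=2\tau_0$, $T=0$, $\eta_{2j-1}=0$ from Lemma~\ref{lem:n2-reduction} and, when $\alpha+\beta=1$, contradict Lemma~\ref{lem:iff of PST}-(1). You play the odd number $\lambda_1-\eta_1=A+m$ against the even number $\lambda_1-\lambda_3$. The paper instead plays $\lambda_1-\lambda_3=2\tau_0+2\beta m$ against $\lambda_1-\lambda_4=2\tau_0+2\alpha m$, one of which is $2\bmod 4$ and the other $0\bmod 4$; this has the minor advantage of not involving any $\eta_h$.

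Your closing remark about $m=1$ is wrong, however, and should be removed.
\begin{itemize}
\item For $n=2$ the range $1\le h\le \frac n2-1$ is empty, so there is no $\eta_1$ to pin down $\gamma$.
\item Lemma~\ref{lem:iff of PST} is stated only for $n\ge 3$.
\item Proposition~\ref{prop:n2-no-special}, which you would need to get $D=\emptyset$, rests on that lemma.
\end{itemize}
In fact the proposition is false for $n=2$. Here $D_4\cong\mathbb Z_2^2$, and $S=\{a,b\}$ is conjugation-closed. The graph $\mathrm{Cay}(D_4,S)\cong C_4$ is connected and has perfect state transfer from $1$ to $ab$ at time $\frac{\pi}{2}$, yet $S_1=\{b\}\ne b\langle a\rangle$. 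So the statement must be read under the paper's standing hypothesis $n\ge 3$. That forces $m\ge 3$, so $\eta_1$ exists, and in that range your proof is complete.
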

\begin{proof}
	Assume that $\Gamma$ admits perfect state transfer. By Proposition \ref{prop:n2-no-special} and
	Lemma~\ref{lem:n2-reduction}, there exists a set $E\subseteq \{e:e\mid m,\ e<m\}$ such that $D=E\cup 2E$, $|S_0|=2\tau_0$, $\sum\limits_{a^k\in S_0}(-1)^k=0$, $\eta_{2j-1}=0$, $\eta_{2j}=2\tau_j$, and \(\tau_0\) is even.
	
	By Corollary \ref{core:eigenvalues of D2n}, $\lambda_1=2\tau_0+m(\alpha+\beta)$, $\lambda_2=2\tau_0-m(\alpha+\beta)$, $\lambda_3=m(\alpha-\beta)$, $\lambda_4=m(-\alpha+\beta)$.	
	We first show that $\alpha=\beta$. 
	Suppose otherwise. 
	Then exactly one of \(\alpha,\beta\) is equal to \(1\). Since \(m\) is odd and \(\tau_0\) is even, $\lambda_1-\lambda_3=2\tau_0+2\beta m$, $\lambda_1-\lambda_4=2\tau_0+2\alpha m$.
	Hence one of these two integers is congruent to \(2\pmod 4\), while the other is divisible by \(4\).
	Therefore they have different \(2\)-adic valuations, contradicting Lemma \ref{lem:iff of PST}-(1), which requires $v_2(\lambda_1-\lambda_3)=v_2(\lambda_1-\lambda_4)$.
	Thus \(\alpha=\beta\). 
	If \(\alpha=\beta=0\), then \(S_1=\emptyset\).
	Hence \(\Gamma\) is disconnected, a contradiction. 
	Therefore \(\alpha=\beta=1\), and so \(S_1=b\langle a\rangle\).
	This completes the proof.
\end{proof}

Combining Proposition \ref{prop:n2-S1} with Lemma \ref{lem:auxiliary-Km}, we now obtain a complete structural characterization of perfect state transfer in the case \(n\equiv 2 \pmod 4\).

\begin{thm}\label{thm:n=2mod4}
	Let \(n\equiv 2 \pmod 4\), $G\cong D_{2n}$, $G=\langle a,b: a^n=b^2=1,\ b^{-1}ab=a^{-1}\rangle$ and \(S\subseteq G\setminus\{1_G\}\) be conjugation-closed. 
	Assume that \(\Gamma=\mathrm{Cay}(G,S)\) is connected. 
	Then \(\Gamma\) admits perfect state transfer if and only if $S=G\setminus\{1_G,a^\frac{n}{2}\}$.
\end{thm}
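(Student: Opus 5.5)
Write $n=2m$ with $m$ odd. The plan is to prove necessity by combining Proposition~\ref{prop:n2-S1}, Lemma~\ref{lem:n2-reduction} and Lemma~\ref{lem:auxiliary-Km}, and then to prove sufficiency by checking the conditions of Lemma~\ref{lem:iff of PST}-(1) directly.

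\emph{Necessity.} Assume $\Gamma$ admits perfect state transfer. Proposition~\ref{prop:n2-S1} gives $S_1=b\langle a\rangle$, so $\alpha=\beta=1$. Lemma~\ref{lem:n2-reduction} then gives $D=E\cup 2E$ with $E\subseteq\{e:e\mid m,\ e<m\}$. It also gives $\eta_{2j-1}=0$, $\eta_{2j}=2\tau_j$, $A=|S_0|=2\tau_0$ and $T=0$. By Corollary~\ref{core:eigenvalues of D2n} we get $\lambda_1=2\tau_0+2m$, $\lambda_2=2\tau_0-2m$ and $\lambda_3=\lambda_4=0$.

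From $v_2(\eta_2-\eta_1)=\gamma$ we get $\gamma=v_2(2\tau_1)$. From $v_2(\lambda_1-\lambda_3)=\gamma$ we get $\gamma=v_2(2\tau_0+2m)$. Since $\tau_0$ is even and $m$ is odd, $\tau_0+m$ is odd, so $\gamma=1$.

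With $\gamma=1$, the conditions $v_2(\eta_{2j}-\eta_{2j-1})=1$ for $1\le j\le \frac{m-1}{2}$ say that $2\tau_j$ has valuation $1$, so every $\tau_j$ with $1\le j\le\frac{m-1}{2}$ is odd. The graph $\mathrm{ICG}_m(E)$ is undirected, so $\tau_{m-j}=\tau_j$. Hence every $\tau_j$ with $1\le j\le m-1$ is odd. Lemma~\ref{lem:auxiliary-Km} then forces $E=\{e:e\mid m,\ e<m\}$. Consequently $D=E\cup2E$ is the set of all proper divisors of $n$ except $m=\frac n2$, that is, $S_0=\langle a\rangle\setminus\{1,a^{m}\}$. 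Together with $S_1=b\langle a\rangle$ this gives $S=G\setminus\{1_G,a^{n/2}\}$. (The degenerate case $m=1$, i.e.\ $n=2$, has to be handled directly, or excluded, since Lemma~\ref{lem:iff of PST} assumes $n\ge3$.)

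\emph{Sufficiency.} Take $S=G\setminus\{1_G,a^{n/2}\}$, so $E$ is all proper divisors of $m$ and $\mathrm{ICG}_m(E)=K_m$. Then $\tau_0=m-1$, which is even, and $\tau_j=-1$ for $j\ge1$. This gives $\eta_{2j-1}=0$, $\eta_{2j}=-2$, $\lambda_1=4m-2$, $\lambda_2=-2$ and $\lambda_3=\lambda_4=0$; in fact these values can also be read off directly from the fact that $\Gamma$ is a complete multipartite graph. We now check Lemma~\ref{lem:iff of PST}-(1) with $\gamma=1$:
\begin{itemize}
\item $v_2(\lambda_1-\lambda_3)=v_2(\lambda_1-\lambda_4)=v_2(4m-2)=1$;
\item $v_2(\lambda_1-\lambda_2)=v_2(4m)=2>1$;
\item $v_2(\eta_1-\lambda_1)=v_2(4m-2)=1$;
\item every consecutive difference $\eta_{j+1}-\eta_j$ equals $\pm2$, so its valuation is $1$.
\end{itemize}
Hence $\Gamma$ admits perfect state transfer. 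The graph is connected because $S$ contains $a$ (when $m>1$) and all reflections.

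\emph{Main obstacle.} The key step is pinning down $\gamma=1$ from $\lambda_1-\lambda_3$. This is what converts the $2$-adic conditions on the $\eta_{2j}$ into the statement that all $\tau_j$ are odd, which is exactly the hypothesis Lemma~\ref{lem:auxiliary-Km} needs. Everything else is bookkeeping already supplied by the preceding lemmas.
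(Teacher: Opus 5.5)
Your proposal is correct and follows the paper's proof essentially step for step: Proposition~\ref{prop:n2-S1}, the reduction in Lemma~\ref{lem:n2-reduction}, $\gamma=1$, oddness of all $\tau_j$, Lemma~\ref{lem:auxiliary-Km}, and then a direct check of Lemma~\ref{lem:iff of PST}-(1); the only difference is that you extract $\gamma=1$ from $\lambda_1-\lambda_3$ rather than from $\lambda_1-\eta_1$, which is the same number $2\tau_0+2m$ because $\lambda_3=\eta_1=0$. On your parenthetical about $m=1$: that case must be excluded rather than handled, as the paper does implicitly through the hypothesis $n\ge 3$ in Lemma~\ref{lem:iff of PST}, because for $n=2$ the group is the Klein four-group and $\mathrm{Cay}(G,\{a,b\})\cong C_4$ also admits perfect state transfer, which contradicts the uniqueness claim.
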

\begin{proof}
	Let \(m=\frac{n}{2}\).
	Clearly $m$ is odd.

	\medskip
\noindent\emph{Necessity.}
	Assume that \(\Gamma\) admits perfect state transfer.
	Then Lemma \ref{lem:iff of PST} gives that \(\Gamma\) is integral and Proposition \ref{prop:n2-S1} gives $S_1=b\langle a\rangle$. 
	By Corollary \ref{core:eigenvalues of D2n},	there exists a set \(D\subseteq \{d:d\mid n,\ d<n\}\) such that $S_0=\bigcup\limits_{d\in D}G_n(d)$.	
	By Proposition \ref{prop:n2-no-special}, \(\frac{n}{2}\notin D_1\) and \(D_0=2D_1\).
	Set \(E:=D_1\). 
	Since \(m\) is odd, it follows that $E\subseteq \{e:e\mid m,\ e<m\}$, $D=E\cup 2E$. 
	Therefore
	\[
	S_0=\bigcup_{e\in E}\bigl(G_n(e)\cup G_n(2e)\bigr).
	\]
	Let \(\tau_j\) be the eigenvalues of \(\mathrm{ICG}_m(E)\), that is $\tau_j=\sum\limits_{e\in E}c(j,\frac{m}{e})$, $0\le j\le m-1$.
	By Lemma \ref{lem:n2-reduction} together with Corollary \ref{core:eigenvalues of D2n}, we have $\lambda_1=2\tau_0+2m$, $\lambda_2=2\tau_0-2m$, $\lambda_3=\lambda_4=0$, and \(\tau_0\) is even.
	
	Since \(m\) is odd and \(\tau_0\) is even, \(\tau_0+m\) is odd. 
	Hence $v_2(\lambda_1-\eta_1)=v_2(\lambda_1)=v_2\left(2(\tau_0+m)\right)=1$.
	Thus Lemma~\ref{lem:iff of PST}-(1) yields \(\gamma=1\). 
	For \(1\le j\le \frac{m-1}{2}\), Lemma~\ref{lem:n2-reduction} gives $\eta_{2j-1}=0$, $\eta_{2j}=2\tau_j$.
	Therefore $	1=\gamma=v_2(\eta_{2j}-\eta_{2j-1})=v_2(2\tau_j)$.
	Hence each \(\tau_j\) is odd for \(1\le j\le \frac{m-1}{2}\). 
	By the symmetry \(\tau_j=\tau_{m-j}\), every \(\tau_j\) with \(1\le j\le m-1\) is odd.
	Therefore, by Lemma~\ref{lem:auxiliary-Km}, $E=\{e:e\mid m,\ e<m\}$.
	Hence
	\[
	S_0=\bigcup_{\substack{e\mid m,~e<m}}\bigl(G_n(e)\cup G_n(2e)\bigr).
	\]
	
	We now show that $S_0=\langle a\rangle\setminus\{1,a^m\}$. 
	Indeed, let \(1\le k< n\). 
	If \(a^k\neq 1,a^m\), then \(\gcd(k,n)\neq n,m\). 
	Since \(n=2m\) with \(m\) odd, we may write \(\gcd(k,n)=e\) or \(2e\) for some divisor \(e\) of \(m\) with \(e<m\).
	Thus \(a^k\in G_n(e)\cup G_n(2e)\) for some \(e\mid m\), \(e<m\). 	
	Conversely, if \(a^k\in G_n(e)\cup G_n(2e)\) for some \(e\mid m\), \(e<m\), then \(\gcd(k,n)\neq n,m\), and hence \(a^k\neq 1,a^m\). 
	Therefore $S_0=\langle a\rangle\setminus\{1,a^m\}$.
	Together with \(S_1=b\langle a\rangle\), this proves the necessity.
	
	\medskip
\noindent\emph{Sufficiency.}
	Assume that $S_0=\langle a\rangle\setminus\{1_G,a^m\}$ and $S_1=b\langle a\rangle$.
	Set $E:=\{e:e\mid m,\ e<m\}$.
	Then $S_0=\bigcup\limits_{e\in E}\left(G_n(e)\cup G_n(2e)\right)$.
	Moreover, $\bigcup\limits_{e\in E}G_m(e)=\mathbb Z_m\setminus\{0\}$, and hence \(\mathrm{ICG}_m(E)=K_m\).  
	Therefore its eigenvalues are $\tau_0=m-1$, $\tau_j=-1$, $1\le j\le m-1$. 
	
Since \(S_0=\bigcup\limits_{e\in E}(G_n(e)\cup G_n(2e))\), for odd \(h\) we have
\[
\eta_h=\sum_{e\in E}\left(c\left(h,\frac{2m}{e}\right)+c\left(h,\frac{m}{e}\right)\right)=0.
\]
Thus $\eta_{2j-1}=0$, $1\le j\le \frac{m-1}{2}$.
For \(1\le j\le \frac{m-1}{2}\), Lemmas~\ref{lem:ramanujan-odd} and~\ref{lem:basic lem 17} give
\[
\eta_{2j}
=\sum_{e\in E}\left(c\left(2j,\frac{2m}{e}\right)+c\left(2j,\frac{m}{e}\right)\right)
=2\sum_{e\in E}c\left(j,\frac{m}{e}\right)
=2\tau_j.
\]
Together with $\tau_j=-1$, $1\le j\le m-1$, we have $\eta_{2j}=-2$, $1\le j\le \frac{m-1}{2}$.
Moreover, since \(S=G\setminus\{1_G,a^m\}\), we have $|S_0|=2m-2$, $\sum\limits_{a^k\in S_0}(-1)^k=0$.

By Corollary~\ref{core:eigenvalues of D2n}, we get $\lambda_1=4m-2$, $\lambda_2=-2$, $\lambda_3=\lambda_4=0$.
Since \(m\) is odd, $v_2(\lambda_1-\lambda_3)=v_2(\lambda_1-\lambda_4)=v_2(\lambda_1-\eta_1)=1$.
Moreover, $v_2(\lambda_1-\lambda_2)=v_2(4m)>1$.
Finally, the sequence $\eta_1,\eta_2,\ldots,\eta_{m-1}$ alternates between \(0\) and \(-2\). 
Hence $v_2(\eta_{h+1}-\eta_h)=1$, $1\le h\le m-2$.
Therefore all conditions in Lemma~\ref{lem:iff of PST}-(1) are satisfied with \(\gamma=1\), and so \(\Gamma\) admits perfect state transfer.
\end{proof}

\begin{core}\label{core: minimal time 2}
	Let \(n\equiv 2 \pmod 4\), $G\cong D_{2n}$, and $\Gamma=\mathrm{Cay}(G,S)$  be connected.
	If \(\Gamma\) admits perfect state transfer, then  the minimal perfect state transfer time is \(\frac{\pi}{2}\).
\end{core}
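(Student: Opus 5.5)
By Theorem~\ref{thm:n=2mod4}, perfect state transfer forces $S=G\setminus\{1_G,a^{\frac n2}\}$. So the graph is completely determined, and all its eigenvalues were already computed in the sufficiency part of that proof. I would use Lemma~\ref{lem:iff of PST}, which says the minimum perfect state transfer time is $\frac{\pi}{M}$ with $M=\gcd\{\lambda_1-\lambda:\lambda\neq\lambda_1\}$. The task therefore reduces to showing $M=2$.

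Write $m=\frac n2$, which is odd. From the proof of Theorem~\ref{thm:n=2mod4}, the distinct eigenvalues are:
\begin{itemize}
\item $\lambda_1=4m-2$, $\lambda_2=-2$ and $\lambda_3=\lambda_4=0$;
\item $\eta_{2j-1}=0$ for $1\le j\le\frac{m-1}{2}$;
\item $\eta_{2j}=-2$ for $1\le j\le\frac{m-1}{2}$.
\end{itemize}
Hence the nonzero differences $\lambda_1-\lambda$ lie in $\{4m,\,4m-2\}$. Here $4m$ comes from $\lambda_2=-2$ (and from $\eta_{2j}=-2$ when $m\ge3$), and $4m-2$ comes from $\lambda_3=\lambda_4=0$ and from $\eta_{2j-1}=0$.

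Both $4m$ and $4m-2$ are even, so $2\mid M$. Since $\lambda_2=-2\ne\lambda_1$ and $\lambda_3=0\ne\lambda_1$ both occur, $M$ divides $\gcd(4m,4m-2)=2$. Therefore $M=2$, and the minimum time is $\frac{\pi}{2}$.

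There is no real obstacle. The only care needed is the degenerate case $m=1$, i.e.\ $n=2$. There the $\eta_h$ range is empty, and $n\ge3$ is needed for Lemma~\ref{lem:iff of PST}. So I would either observe that the statement implicitly assumes $n\ge 3$ (hence $m\ge3$), or note that the argument above uses only $\lambda_2$ and $\lambda_3$, which exist in every case.
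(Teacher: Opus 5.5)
Your proposal is correct and follows essentially the same route as the paper: you invoke Theorem~\ref{thm:n=2mod4} to pin down the eigenvalues $\lambda_1=4m-2$, $\lambda_2=-2$, $\lambda_3=\lambda_4=0$, $\eta_{2j-1}=0$, $\eta_{2j}=-2$, and then conclude $M=\gcd(4m,4m-2)=2$ via Lemma~\ref{lem:iff of PST}. Your explicit gcd computation and your remark on the degenerate case $n=2$ (where Lemma~\ref{lem:iff of PST} requires $n\ge 3$) are small clarifications of what the paper states more tersely.
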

\begin{proof}
	Assume that \(\Gamma\) admits perfect state transfer.
	By Theorem \ref{thm:n=2mod4} and its proof, we have $\gamma=1$ and $\lambda_1=4m-2$, $\lambda_2=-2$, $\lambda_3=\lambda_4=0$, $\eta_{2j-1}=0$, $\eta_{2j}=-2$.
	Hence $M=\gcd(\lambda_1-\lambda:\lambda\neq\lambda_1)=2$, and Lemma \ref{lem:iff of PST} gives the minimum perfect state transfer time is $\frac{\pi}{2}$.
\end{proof}

\begin{rem}\label{rem:2}
Compared with Lemma~\ref{lem:iff of PST}, Theorem \ref{thm:n=2mod4} gives a stronger conclusion in the case \(n\equiv 2\pmod 4\), as it identifies the unique admissible conjugation-closed connection set and hence reduces the existence of perfect state transfer to an immediate structural check.	
\end{rem}

\section{Examples}\label{sec:examples}

We finish the paper with examples illustrating how our results provide convenient and concise criteria for deciding the existence of perfect state transfer, and at the same time enable explicit constructions of Cayley graphs over dihedral groups with perfect state transfer.

\begin{example}\label{ex:positive-central-involution}
	Assume that \(n\equiv 0\pmod 4\), \(G\cong D_{2n}\), $G=\langle a,b: a^n=b^2=1,\ b^{-1}ab=a^{-1}\rangle$, and \(S=S_0\cup S_1\), where $S_0=\{a^{\frac n2}\}$, $S_1=b\langle a\rangle$.
	Then \(\Gamma=\mathrm{Cay}(G,S)\) admits perfect state transfer.
\end{example}

\begin{proof}
	If \(8\mid n\), then Theorem~\ref{thm:8midn-characterization} applies with $E=\emptyset$, $F=\emptyset$, $\xi=\frac n2$.
	If \(n\equiv 4\pmod 8\), then Theorem~\ref{thm:n4mod8-full-reflection} applies with $E=\emptyset$, $\xi=\frac n2$, and \(S_1=b\langle a\rangle\).
	In both cases, $S_0=\bigcup\limits_{d\in D}G_n(d)$, $D=\{\frac n2\}$.
	Therefore \(\Gamma\) admits perfect state transfer. 
\end{proof}

\begin{example}
Let \(p\) be an odd prime, let \(n=2p\), and let $G\cong D_{2n}$, $G=\langle a,b: a^n=b^2=1,\ b^{-1}ab=a^{-1}\rangle$.
It follows immediately from Theorem \ref{thm:n=2mod4} that the unique Cayley graph over \(G\) with conjugation-closed connection set admitting perfect state transfer is $\mathrm{Cay}(G,(\langle a\rangle\setminus\{1,a^p\})\cup b\langle a\rangle)$.
\end{example}

\section{Concluding remarks}\label{sec:7}

In this paper, we have characterized the conjugation-closed connection sets for which
connected Cayley graphs over dihedral groups admit perfect state transfer. 
As a consequence, we also determined that the minimum perfect state transfer time for this class of graphs is \(\frac{\pi}{2}\).
Our approach combines character-theoretic eigenvalue formulas, Ramanujan sums,
M\"obius inversion, and arguments based on the \(p\)-adic exponential valuation.
A natural next step is to study the non-conjugation-closed case.
\begin{que}
	Characterize the connection sets of Cayley graphs over dihedral groups with non-conjugation-closed connection sets that admit perfect state transfer.
\end{que}

More broadly, these methods may also be useful for studying connection sets admitting perfect state transfer in Cayley graphs over other classes of groups.

\section*{Data availability statement}
No datasets were generated or analysed during the current study.

\end{document}